\documentclass[11pt]{article}

\usepackage{inputenc}
\usepackage{amsmath}
\usepackage{amsfonts}
\usepackage{amssymb}
\usepackage{graphicx}
\usepackage[T1]{fontenc}
\usepackage[english]{babel}
\usepackage{mathrsfs}
\usepackage{amsthm}
\usepackage[all]{xy}
\usepackage{geometry}
\usepackage{enumerate}
\allowdisplaybreaks
\usepackage{color}
\usepackage{appendix}
\usepackage{authblk}
\geometry{hmargin=2cm,vmargin=2.25cm}

\DeclareMathOperator{\sinc}{sinc}

\title{Analysis of the "Rolling carpet" strategy to eradicate an invasive species}
\author[1]{Luis Almeida}
\author[2]{Alexis L\'{e}culier}
\author[3]{Nicolas Vauchelet }

\affil[1]{LJLL - Sorbonne Universit\'{e} - \texttt{\small luis.almeida@sorbonne-universite.fr}}
\affil[2]{LJLL - Sorbonne Universit\'{e} - \texttt{\small alexis.leculier@sorbonne-universite.fr}}
\affil[3]{LAGA - Universit\'{e} Sorbonne Paris Nord - \texttt{\small vauchelet@math.univ-paris13.fr}}
\begin{document}                     
\maketitle

\begin{abstract}
In order to prevent the propagation of human diseases transmitted by mosquitoes (such as dengue or zika), one possible solution is to act directly on the mosquito population. In this work, we consider an invasive species (the mosquitoes) and we study two strategies to eradicate the population in the whole space by a local intervention. The dynamics of the population is modeled through a bistable reaction diffusion equation in an one dimensional setting and both strategies are based on the same idea : we act on a moving interval. The action of the first strategy is to kill as many individuals as we can in this moving interval. The action of the second strategy is to release sterile males in this moving interval. For both strategies, we manage to generate traveling waves that propagate in the opposite direction relative to the one of the natural invasive traveling wave. These cases correspond to succeeding in eradicating the invasive species. Furthermore, for the first strategy, we fully characterize the minimal size of the interval. All the results are illustrated by numerical simulations.
\end{abstract}

\textbf{Keywords}: 
Reaction-diffusion equations; population dynamics; comparison principle.

\textbf{AMS subject classifications}: 35K57, 92D25, 35C07.


\newtheorem{theorem}{Theorem}
\newtheorem{corollary}{Corollary}
\newtheorem{lemma}{Lemma}
\newtheorem{definition}{Definition}
\newtheorem{proposition}{Proposition}
\newtheorem{example}{Example}
\newtheorem{definition-proposition}{Definition-Proposition}
\newtheorem*{notation}{Notation}
\newtheorem*{remark}{Remark}

\section{Introduction}

In this article, we focus on two strategies to eradicate a naturally invading species by a local action on a moving frame. Both strategies are based on the same idea of the "rolling carpet": we act on an interval and we move this interval with a constant speed from an empty area toward an invaded area. The main difference between both strategies is the employed action on this moving interval. The first strategy is quite simple: we kill as many individuals as we can in the interval. The second strategy is more evolved and consists in using the sterile insect technique, i.e. sterile males are released in the interval. The aim of this work is to investigate the following questions :
\begin{itemize}
    \item Do the strategies work ?
    \item What can be said about the minimal size of the interval ? 
    \item What can be said on the speed of the rolling carpet ? 
\end{itemize}

\vspace{0.2cm}

\subsection{The mathematical models}

As explained above, both strategies rely on the same idea. Let $v$ be a population density (which takes into account  or not the sex of the individuals) that we suppose to live in one-dimensional space. The dynamics of the population is driven by the following reaction-diffusion equation:
\begin{equation}
\partial_t v  - \partial_{xx} v = g(v). 
\end{equation}
In order to model an \textit{Allee effect} (a small density will go to extinction whereas a large number of individuals in a large space will invade the territory), we assume that the reaction term is bistable and normalizes such that $0$ and $1$ are the two stable steady state. Reaction terms will be specified later. We will also assume that at the initial time, the population is "almost" established in a neighbourhood of $-\infty$ and "almost" absent in a neighbourhood of $+\infty$: there exists $\lambda_0>0$ such that 
\begin{equation}\label{H2S1'} \tag{H0}
 \forall x>M, \qquad    (1-e^{-\lambda_0 x}) \leq  v(-x, 0)< 1  \quad \text{ and } \quad 0 <  v(x, 0) < e^{-\lambda_0 x}.
\end{equation}
Then, it is well known that the solution $v$ adopts the same shape than the traveling wave which connects the two stable states of $g$, $0$ and $1$, with a constant speed $c_{bistable}$. Moreover, the sign of $c_{bistable}$ is determined by the sign of $\int_{0}^1 g(s) ds$ (see e.g.  \cite[Theorem 4.9]{Perthame}). Therefore, if $c_{bistable} \leq 0$, the intrinsic dynamics avoids invasion phenomena by itself. However if $c_{bistable}>0$, there exists an invasion phenomenon: the native population will tend to occupy the whole space. Our work will be done in the setting $\int_0^1 g(s)\,ds >0$, since it concerns strategy to push the population towards $-\infty$.

For both strategies, we act on a moving interval $(ct , L + ct)$ with $c<0$. Somehow, the dynamics is driven by 
\[ \partial_t u - \partial_{xx} u = g(u) 1_{ \left\lbrace x < ct , \  x>L+ct \right\rbrace} + Act(x,u) 1_{\left\lbrace ct \leq x \leq L+ct \right\rbrace}.\]
The aim of this paper is to prove that for well-designed functions $Act$ there exist traveling wave solutions that connect $0$ to $1$ with a negative speed $c$. 
Such traveling wave solutions satisfy the equation
\begin{equation}\label{P'} \tag{$\mathcal{P}$}
 \left\lbrace
\begin{aligned}
    & -cu'- u'' = g(u) 1_{ \left\lbrace x < 0, \  x>L \right\rbrace} + Act(x,u) 1_{\left\lbrace 0 < x < L \right\rbrace}, \\
    &u(-\infty) = 1, \quad u(+\infty) = 0.
\end{aligned}
\right. 
\end{equation}
When such a solution $u$ exists with a negative speed $c$, then this strategy allows to eradicate the species.
Indeed, this traveling wave solution will be a super-solution above $v(t=0)$, the comparison principle implies that $0 \leq v \leq u(x + ct)$. Since, $\underset{ t \to +\infty}{\lim} u(x+ct) = 0$ for any $x\in\mathbb{R}$, we conclude that the population $v$ goes to extinction as time grows.


\subsubsection{The killing strategy}

In this first strategy, we simply consider a population modelled by its density $u$ whose dynamics is governed by the reaction-diffusion equation
\[ \partial_t u - \partial_{xx} u = g(u),\]
with $g$ a "classical" smooth bistable reaction term:
\begin{equation} \label{H1S1} \tag{H1}
\begin{aligned}
&g(0) = g(1) = g(\alpha) = 0, \\
&g'(0)< 0, \quad g'(1)<0 , \quad g'(\alpha)>0 \quad
\text{ and } \quad \int_0^1 g(u)du >0. 
\end{aligned}
\end{equation}
Since we have assumed that $\int_0^1 g(u) du  >0$, there exists $\beta \in (\alpha , 1)$ such that
\begin{equation}\label{beta}
\int_0^\beta g(u)du =  0.
\end{equation}
We also introduce $\alpha_1$ and $\alpha_2$ such that 
\begin{equation}\label{alpha12}
\alpha_1 < \alpha < \alpha_2 \quad \text{ and } \quad g'(\alpha_1) = g'(\alpha_2) = 0. 
\end{equation}
The strategy is quite simple: on the moving interval $(ct,L+ct)$, we kill individuals with a given rate. Therefore, the reaction term in this interval is replaced by a simple death term $-\mu u$. The new dynamics is driven by the equation 
\[ \partial_t u - \partial_{xx} u = g(u) 1_{\left\lbrace x<ct, \  x>L+ct \right\rbrace} - \mu 1_{ \left\lbrace ct \leq x \leq L+ct \right\rbrace }u. \]
In this interval, the individuals die "naturally" and are "killed", hence it is natural to assume that 
\begin{equation}
    \label{H2S1}\tag{H2}
    -\mu u \leq  g(u). 
\end{equation}
Then, as explained above, the goal is to prove the existence of a traveling wave solution of
\begin{equation}\label{PS1} \tag{$\mathcal{P}_1$}
    \left\lbrace
    \begin{aligned}
    &-c u ' - u'' = g(u)1_{ \left\lbrace x<0, \ x> L \right\rbrace} - \mu 1_{ \left\lbrace 0  <  x <  L \right\rbrace }u, \\
    &u(-\infty) = 1, \quad u(+\infty) = 0,
    \end{aligned}
    \right.
\end{equation}
with $c<0$.
Since equation \eqref{PS1} depends on two parameters : the speed $c$ and the size $L$, we will always specify both of them.

\subsubsection{The sterile males strategy}

This strategy is more evolved. The idea is to release artificially sterilized males in the interval $(ct, L+ct)$. As we can expect, the main consequence of the releases is the decline of the growth rate. For this strategy, the equation must take into account the proportion of males $m$, females $f$ and sterile males $m_S$ : $v = f + m + m_S$. A classical assumption used to simplify the system is to consider that the proportion of fertile males and females are equivalent ($m \sim f$). Therefore, we focus only on the proportion of fertile females and sterile males. In the case where we suppose that both the females and the sterile males diffuse, the equations which drive the dynamics reads
\begin{equation} 
\left\lbrace
\begin{aligned}
& \partial_t f - f'' = g(f, m_S)\\
& \partial_t m_S - m_S'' = \mathrm{M} 1_{\left\lbrace ct < x < L +ct \right\rbrace} - \mu_s m_S,
\end{aligned}
\right.
\end{equation}
where $M$ is the constant density of released males in the moving frame.
The reaction term $g$ is such that without any sterile males, the density of the population follows a bistable dynamics: 
\begin{equation}\label{H3S2}\tag{H3}
    g(f, 0) \text{ satisfies assumption \eqref{H1S1}.} 
\end{equation}
We also assume that as we introduce sterile males, the birth rate decreases
\begin{equation}\label{H4PS2} \tag{H4}
 \dfrac{\partial g(f,m)}{\partial m} < 0 \quad \text{ and  } \quad    g( f, m ) \underset{ m \to + \infty}{\longrightarrow } -\mu f .
\end{equation}
We add a technical assumption and a natural one
\begin{equation}\label{H5PS2} \tag{H5}
\begin{aligned}
    g \text{ is uniformly continuous  with respect to }m 
   \qquad  \text{ and } \qquad  g(0, m)  = 0.
\end{aligned}
\end{equation}
In the next section, we provide a specific example of such a function $g$. We look for traveling wave solutions with $c<0$ of 
\begin{equation}  \label{PS2}  \tag{$\mathcal{P}_2$}
\left\lbrace
\begin{aligned}
& -c f' - f'' =g(f, m_S)\\
& -c m_S' - m_S'' = \mathrm{M} 1_{\left\lbrace 0 < x < L \right\rbrace} - \mu_s m_S, \\
&f(-\infty) = 1, \quad f(\infty) = 0.
\end{aligned}
\right.
\end{equation}
Since equation \eqref{PS2} depends on three parameters : the speed $c$, the size $L$ and the quantity of released sterile males $\mathrm{M}$, we will always refer to \eqref{PS2} by specifying these three parameters $(c, L, \mathrm{M})$.

\subsection{Biological motivations}\label{BiologicalMotivation}

The "rolling carpet" strategies have been already used in the field to eradicate some species of insect, for instance the tsetse fly in large area \cite{Vreysen2007}.
A motivation of this work is to fight against the spreading of mosquitoes born diseases which cause more than 700 000 deaths annually according the World Health Organization \cite{who}. Indeed, such insects are the vectors of many lethal diseases as the dengue, Malaria and others. Both strategies are already used in practice \cite{SterileInsectBook, BiblioSterileMales}. 

In this paper, we propose to analyze from a mathematical point of view, these strategies by studying conditions that make sure the succeed. The first strategy (the killing strategy) is usually achieved by spreading insecticide. The insecticide has two disadvantages. First, it does not target a specific type of mosquitoes, nor even mosquito relative other insects (having possible side effect on the whole ecosystem). Secondly, the mosquitoes may adapt and become more resistant to the insecticide (see e.g. \cite{Resistance}). Therefore the second strategy seems to be more advantageous and environmental friendly.

One possible model of the reaction term for the mosquito dynamics is the following:
\begin{equation}
    \label{gms}
    g(f,m) = \frac{C_1 f^2(1-e^{-(\beta_1 f + \beta_2 m)})}{f^2(1-e^{-(\beta_1 f + \beta_2 m)} )+ C_2 (\beta_1 f + \beta_2 m)} - \mu f.
\end{equation}
where $C_{1,2}$, $\beta_{1,2}$ and $\mu$ are positive constants which depend on several intrinsic constants (such as death and birth rates at different stage of life, the environmental capacity...).
Such reaction term can be obtained by making a quasi-stationary assumption in the complete model introduced in \cite{strugarek_TIS} (see also \cite{almeida2020sterile} and the references therein for a precise derivation of this model). Notice that this reaction term is a simplification of a more evolved system which takes into account different stages of development of the mosquitoes (egg, larva, adult...). We also underline that for a suitable choice, relevant from a biological point of view, of constants $C_{1,2}$, $\beta_{1,2}$ and $\mu$, this reaction term verifies the hypothesis \eqref{H3S2}, \eqref{H4PS2} and \eqref{H5PS2}.


\vspace{0.25cm}

Notice that we describe the particular example of mosquitoes but this work can be adapted to other invasive species and to other frameworks as social sciences (see \cite{BerRodRyz} for example).

\subsection{Main results and comments}

For both strategies, we prove the existence of a traveling wave with negative speed $c<0$. For the first equation \eqref{PS1}, our main result reads
\begin{theorem}\label{mainPS1}
Under the assumptions \eqref{H1S1} and \eqref{H2S1}, there exists $\Lambda_0>0$ and a decreasing bijection 
\begin{equation}
\begin{aligned}
    \Lambda :  \ ]-\infty, 0]& \rightarrow [\Lambda_0 , +\infty[ \\
    c &\mapsto \Lambda(c) 
    \end{aligned}
\end{equation}
such that 
\begin{enumerate}
    \item For any speed $c\leq 0$ and $L<\Lambda(c)$, \eqref{PS1} with parameters $(c, L)$ does not admit a solution. Moreover, the equation 
    \[ \left\lbrace
    \begin{aligned}
    &-cu' - u'' = g(u) 1_{\left\lbrace x < 0, \ x > L \right\rbrace}(x) - \mu 1_{\left\lbrace 0 <x < L \right\rbrace} \\
    &u(-\infty) = 1.
    \end{aligned}
    \right.\]
     verifies $u(+\infty)= 1$.
   \item For any speed $c\leq 0$ and $L>\Lambda(c)$, \eqref{PS1} with parameters $(c, L)$ admits a decreasing solution. 
\end{enumerate}
Moreover, when $L=\Lambda(c)$, if we assume that $g$ is convex in the interval $(0, \alpha)$ then
\begin{enumerate}
   \item if $c$ is such that $- 2\sqrt{g'(\alpha)} < c \leq 0$, \eqref{PS1} with parameters $(c, \Lambda(c))$ admits a solution. This solution satisfies $u'(\Lambda(c)) = 0$ and $\alpha < u(\Lambda(c)) \leq \beta $,
   \item if $c$ is such that $c\leq -2\sqrt{g'(\alpha)}$, \eqref{PS1} with parameters $(c, \Lambda(c))$ does not admit a solution. Moreover, we have that $\underset{L> \Lambda(c)}{\sup} u(L)  = \alpha$. 
\end{enumerate}
\end{theorem}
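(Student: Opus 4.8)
The plan is to study the ODE \eqref{PS1} by a careful phase-plane analysis, gluing together three regions: the left region $x<0$ where the equation is the bistable traveling-wave ODE $-cu'-u''=g(u)$, the middle ``killing'' region $0<x<L$ where it is the linear ODE $-cu'-u''=-\mu u$, and the right region $x>L$ where it is again $-cu'-u''=g(u)$. On $(-\infty,0)$, the boundary condition $u(-\infty)=1$ together with $u(0)=u_0$ (for some $u_0$ to be determined) forces $u$ to lie on the one-dimensional unstable manifold of the saddle point $(1,0)$ in the $(u,u')$ phase plane; this determines $u'(0^-)$ as a function $\Phi(u_0)$ of $u_0$ (monotone in $u_0$, negative, defined for $u_0$ in a suitable range $(\,\cdot\,,1)$, with a limiting value as $u_0$ decreases that is governed by the energy $\int_0^{u_0}g$ and the sign of $c$). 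On $(L,\infty)$, the condition $u(+\infty)=0$ together with $u(L)=u_L$ forces $u$ onto the stable manifold of $(0,0)$; for a \emph{decreasing} solution this again pins down $u'(L^+)$ as a function $\Psi(u_L)$ of $u_L$. In the middle, the linear flow is explicit: writing the general solution as a combination of $e^{r_\pm x}$ with $r_\pm$ the roots of $-r^2-cr+\mu=0$, the linear map $(u(0),u'(0))\mapsto(u(L),u'(L))$ is an explicit matrix $T_L$ (exponential in $L$). Continuity of $u$ and $u'$ at $x=0$ and $x=L$ then reduces existence of a solution to the scalar problem: find $u_0$ such that, setting $(u_L,u_L')=T_L(u_0,\Phi(u_0))$, one has $u_L'=\Psi(u_L)$. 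One first shows that along the linear flow starting from any admissible $(u_0,\Phi(u_0))$ the trajectory is monotone (decreasing $u$, and the slope $u'/u$ ranges monotonically), so that there is a well-defined first ``exit value'' as $L$ increases; this is exactly what produces the threshold $\Lambda(c)$ and its characterization.

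The first structural step is therefore to define $\Lambda(c)$ as the length $L$ for which the linear trajectory through $(u_0,\Phi(u_0))$, optimized over the admissible $u_0$, just reaches the stable manifold of $0$ of the right-hand bistable field --- equivalently, the smallest $L$ for which the matching equation $u_L'=\Psi(u_L)$ has a solution. A monotonicity/continuity argument in $L$ (the linear flow contracts toward the eigendirection $e^{r_- x}$, $r_-<0$, so $u(L)\to 0^+$ and $u'(L)/u(L)\to r_-$ as $L\to\infty$) shows: (i) for $L<\Lambda(c)$ the trajectory overshoots --- $u$ cannot be brought down onto the stable manifold while staying nonnegative and decreasing, and in fact any solution of the half-line problem with $u(-\infty)=1$ must satisfy $u(+\infty)=1$ (the trajectory re-enters the basin of $1$); (ii) for $L>\Lambda(c)$ there is enough ``room'' and an intermediate-value argument produces $u_0$ with a genuine connection to $0$, giving a decreasing solution; monotonicity of the construction in $c$ gives that $\Lambda$ is a decreasing bijection onto $[\Lambda_0,+\infty)$ with $\Lambda_0=\Lambda(0)$. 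This establishes parts (1) and (2).

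For the critical case $L=\Lambda(c)$, I would use the convexity of $g$ on $(0,\alpha)$ to get precise control of the stable manifold of $(0,0)$: convexity implies the stable manifold of $0$ lies above (or below) the line through $0$ with slope given by the linearization, and it forces any decreasing orbit that approaches $0$ to do so monotonically with $u'(x)/u(x)$ between the two roots of $-\lambda^2-c\lambda+g'(0)=0$ (here one uses $g'(0)<0$). The threshold trajectory at $L=\Lambda(c)$ hits the right-hand field exactly at the boundary of the region from which a connection to $0$ is possible; the value $u(\Lambda(c))$ is where the linear trajectory is tangent, which is where $u'=0$ along the continuation. The dichotomy in $c$ is driven by comparing the contraction rate of the linear middle flow with the rate $\sqrt{g'(\alpha)}$ governing oscillations of the bistable field near the unstable equilibrium $\alpha$: if $-2\sqrt{g'(\alpha)}<c\le 0$ the field near $\alpha$ is a focus/weak node and the tangency occurs at a value in $(\alpha,\beta]$ with $u'(\Lambda(c))=0$, giving a solution; if $c\le -2\sqrt{g'(\alpha)}$ the relevant equilibrium $\alpha$ becomes a (stable) node for the reversed field and the tangency value is pushed down exactly to $\alpha$, so no solution exists at $L=\Lambda(c)$ but $\sup_{L>\Lambda(c)}u(L)=\alpha$, by the continuity of $u(L)$ in $L$ established in part (2).

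The main obstacle I anticipate is the critical-case analysis: proving that at $L=\Lambda(c)$ the matching occurs precisely with $u'(\Lambda(c))=0$ and that the value $u(\Lambda(c))$ lands in $(\alpha,\beta]$ (resp. equals $\alpha$) requires a delicate comparison between the explicit linear flow in the middle and the nonlinear flow on the right, and this is exactly where the convexity hypothesis on $g|_{(0,\alpha)}$ and the threshold $-2\sqrt{g'(\alpha)}$ (the boundary between oscillatory and monotone behavior of the linearized bistable field at $\alpha$) must be used in a quantitative way --- in particular, ruling out that the critical connection could occur with $u'(\Lambda(c))\neq 0$, and identifying $\beta$ via the energy identity $\int_0^\beta g=0$ as the extreme value reachable.
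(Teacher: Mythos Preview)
Your phase-plane/shooting framework is essentially what the paper does under the hood, especially in the proof of Proposition~\ref{intermdiate6}, where the explicit linear flow on $(0,L)$ and the matching of $(u,u')$ at $x=0$ and $x=L$ appear exactly as you describe. Two points, however, deserve comment.

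First, the paper does not work with the matching equation directly for the existence theory; it defines ``the'' solution of \eqref{PS1} as the supremum of all sub-solutions (hence the minimal nontrivial solution), and proves existence by exhibiting ordered sub/super-solutions. This is not cosmetic: the monotonicity of the solution in $x$ and the ordering of solutions in $L$ --- which you implicitly assume --- are obtained by contradiction from minimality (if $u$ were not decreasing, one constructs a strictly smaller nontrivial super-solution). Your plan uses these monotonicity facts without indicating how to secure them; a pure shooting argument can produce several branches, and selecting the right one is precisely what the sub/super-solution framework buys.

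Second, and this is the real gap, your mechanism for the critical-case dichotomy is not correct. You attribute the threshold $c=-2\sqrt{g'(\alpha)}$ to a focus/node transition at $\alpha$ and to ``comparing the contraction rate of the linear middle flow with $\sqrt{g'(\alpha)}$''. The middle flow plays no role in the dichotomy. The paper's argument is structural: one first proves (Lemma~\ref{MainLemmaPS1}) that the right tail $u|_{(L,\infty)}$ is unique up to translation, so the whole question reduces to the backward continuation of that common tail under $-cu'-u''=g(u)$. If there exists a decreasing traveling wave $u_{\mathrm{TW}}$ with $u_{\mathrm{TW}}(-\infty)=\alpha$, $u_{\mathrm{TW}}(+\infty)=0$ at speed $c$, then the tail \emph{is} (a translate of) $u_{\mathrm{TW}}$, hence $u(L)<\alpha$ for every $L>\Lambda(c)$ and no solution exists at $L=\Lambda(c)$; the limit gives $u(L)\to\alpha$, $u'(L)\to 0$. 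If no such wave exists, the backward continuation must turn around at a point where $u'=0$ with value in $(\alpha,\beta]$ (the upper bound from the energy identity $\int_0^{u(\Lambda(c))}g=c\int_{\Lambda(c)}^\infty (u')^2\le 0$), and this turnaround furnishes the critical solution. The convexity of $g$ on $(0,\alpha)$ is used only at the very last step, to invoke the Fisher--KPP minimal-speed result that such a wave connecting $\alpha$ to $0$ exists exactly when $|c|\ge 2\sqrt{g'(\alpha)}$; it is \emph{not} used for fine estimates on the stable manifold of $0$ as you propose. Your ``tangency'' picture points at the right object (the turnaround of the extended tail) but with the wrong explanation of when and why it occurs.
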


For the second strategy, the main result reads
\begin{theorem}\label{mainPS2}
Under the assumptions \eqref{H3S2}, \eqref{H4PS2} and \eqref{H5PS2}, there exists a function 
\[  \Pi: (c, L) \in \mathbb{R}^- \times \mathbb{R}^{+*} \mapsto \Pi(c, L)\]
such that for any speed $c \leq 0$ and size $L>0$
\begin{enumerate}
    \item For any $\mathrm{M}> \Pi(c,L)$, \eqref{PS2} with parameters $(c, L, M)$ admits a solution, 
    \item For any $\mathrm{M}< \Pi(c,L)$, \eqref{PS2} with parameters $(c,L,M)$ does not admit a solution.
\end{enumerate}
Moreover, for a fixed speed $c<0$, we have that
\[ \underset{ L \to 0}{\lim }  \ \Pi(c, L) = +\infty \quad \text{ and } \quad  \underset{ L \to +\infty}{\lim } \Pi(c, L) >\Pi_\infty(c)>0.\]
What's more, for a fixed size $L>0$, 
\[ \underset{c \to -\infty}{\lim} \ \Pi(c, L) = +\infty.\]
\end{theorem}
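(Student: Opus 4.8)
The plan is to mirror the structure that (presumably) worked for Theorem \ref{mainPS1}, but now the nonlocal coupling through $m_S$ must be handled first. Step one is to solve explicitly for $m_S$ in terms of $(c,L,\mathrm{M})$: the second equation of \eqref{PS2} is linear, $-cm_S' - m_S'' = \mathrm{M}\,1_{\{0<x<L\}} - \mu_s m_S$, with $m_S(\pm\infty)$ finite, so one writes down the unique bounded solution by the variation-of-constants formula / matching exponentials at $x=0$ and $x=L$. Key qualitative facts to extract: $m_S \geq 0$, $m_S$ is proportional to $\mathrm{M}$ (so $m_S = \mathrm{M}\,\varphi_{c,L}(x)$ for an explicit fixed profile $\varphi_{c,L}$), $\varphi_{c,L}$ is supported essentially around $[0,L]$ with exponential tails, and $\|\varphi_{c,L}\|_\infty \to 0$ as $L\to 0$, whereas $\sup_{x\in[0,L]}\varphi_{c,L}(x) \to \mu_s^{-1}$ as $L\to\infty$ on the bulk of the interval. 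Also $\varphi_{c,L}\to 0$ as $c\to-\infty$ (the transport sweeps the released males away). These limits are exactly what will drive the three limiting statements at the end.

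Step two: substitute $m_S = \mathrm{M}\varphi_{c,L}$ into the $f$-equation, reducing \eqref{PS2} to a scalar problem
\begin{equation}
-cf' - f'' = g\bigl(f, \mathrm{M}\varphi_{c,L}(x)\,1_{\{0<x<L\}}\bigr)1_{\{x<0,\ x>L\}}^{c} =: G_{c,L,\mathrm{M}}(x,f),
\end{equation}
where outside $[0,L]$ the right side is just $g(f,0)$, the bistable term from \eqref{H3S2}, and inside $[0,L]$ it is a reaction that has been pushed downward by \eqref{H4PS2}. The monotonicity in $\mathrm{M}$ is immediate from $\partial_m g<0$: larger $\mathrm{M}$ gives a pointwise smaller reaction, hence (by the comparison principle) a "more eradicating" problem. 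So I expect a threshold structure: define $\Pi(c,L)$ as the infimum of $\mathrm{M}$ for which a solution of \eqref{PS2} with connection $f(-\infty)=1$, $f(+\infty)=0$ exists. Existence for $\mathrm{M}>\Pi$ and non-existence for $\mathrm{M}<\Pi$ should then follow by the same sub/supersolution and sliding arguments used for Theorem \ref{mainPS1} — one builds a solution on $(-\infty,0)$ with $f(-\infty)=1$ governed by the bistable traveling-wave ODE at speed $c$ (which by $\int_0^1 g(s,0)ds>0$ and $c\le 0$ forces a heteroclinic-type behaviour), glues it across $[0,L]$ where the strongly suppressed reaction makes $f$ decay, and continues on $(L,\infty)$ with $f(+\infty)=0$; the matching of $f$ and $f'$ at $x=0$ and $x=L$ plus monotonicity in $\mathrm{M}$ pins down the threshold. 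To make $\Pi$ well-defined and finite one checks: for $\mathrm{M}$ large, inside $[0,L]$ the reaction is close to $-\mu f$ (by \eqref{H4PS2}), which is exactly the killing regime of Theorem \ref{mainPS1}, so a solution exists once $L$ exceeds the corresponding $\Lambda$-type threshold — and for very large $\mathrm{M}$ this threshold is beaten for every $L>0$; conversely for $\mathrm{M}=0$ we are in the pure bistable case with $c\le 0$ and $\int_0^1 g>0$, so no such reversed wave exists, giving $\Pi(c,L)>0$.

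Step three: the three asymptotic statements. For $L\to 0$: since $\|\varphi_{c,L}\|_\infty\to 0$, to keep the product $\mathrm{M}\varphi_{c,L}$ large enough to suppress the reaction one needs $\mathrm{M}\to\infty$, hence $\Pi(c,L)\to\infty$; rigorously, fix any $\mathrm{M}$ and note that for $L$ small the reaction term inside $[0,L]$ stays close to the unsuppressed bistable one uniformly, while the window $[0,L]$ is too short to produce the required drop in $f$ — a quantitative energy/ODE estimate gives non-existence, so $\Pi(c,L)>\mathrm{M}$ eventually. For $L\to\infty$: here $\varphi_{c,L}\to\mu_s^{-1}$ on the bulk, so the relevant quantity is whether $g(f,\mathrm{M}/\mu_s)$ has become "sufficiently negative" on $(0,1)$ in the sense needed for a reversed wave; this produces a finite positive limit $\Pi_\infty(c)$ (the threshold for the homogeneous suppressed problem on a half-line), and since a finite interval is strictly worse than the infinite one, $\lim_{L\to\infty}\Pi(c,L)\ge\Pi_\infty(c)>0$. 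For $c\to-\infty$: the transport term dominates, $\varphi_{c,L}\to 0$ pointwise (the released males are advected out before they accumulate), so again one needs $\mathrm{M}\to\infty$ to compensate; alternatively, even with the reaction fully killed to $-\mu f$ inside $[0,L]$, the $\Lambda$-threshold of Theorem \ref{mainPS1} for existence at speed $c$ diverges as $c\to-\infty$ (a reversed wave that fast needs a very large intervention region), and by monotonicity this forces $\Pi(c,L)\to\infty$ for fixed $L$.

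The main obstacle I anticipate is not the limits but the \emph{gluing/threshold construction} in Step two: because the reaction inside $[0,L]$ depends on $x$ through $\varphi_{c,L}$, it is no longer autonomous there, so one cannot use the clean phase-plane analysis available in the killing problem on $(0,L)$; instead one must argue via the comparison principle with carefully chosen sub- and supersolutions (e.g. bounding $g(f,\mathrm{M}\varphi_{c,L})$ between $g(f,0)$ and $-\mu f + \text{error}$, using the uniform continuity in \eqref{H5PS2}) and a continuity/monotonicity argument in $\mathrm{M}$ to locate the exact threshold. Verifying that the threshold $\Pi(c,L)$ so obtained is the \emph{sharp} separator — in particular that non-existence genuinely holds for all $\mathrm{M}<\Pi(c,L)$, not merely for small $\mathrm{M}$ — is where the monotone dependence on $\mathrm{M}$ and a suitable strong comparison principle (ruling out the "$f(+\infty)=1$" alternative, as in item 1 of Theorem \ref{mainPS1}) will have to be invoked with care.
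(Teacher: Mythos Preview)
Your overall architecture matches the paper's: solve for $m_S$ first, reduce to a scalar non-autonomous equation for $f$, build a solution by sub/super-solutions, use monotonicity in $\mathrm{M}$ to get a threshold $\Pi(c,L)$, and derive the three limits from the behaviour of $m_S$ as $L\to 0$, $L\to\infty$, $c\to -\infty$. The limit arguments you sketch (via $\|\varphi_{c,L}\|_\infty$ and contradiction with the sign of the bistable wave speed) are essentially what the paper does.

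There is, however, a real misreading in Step~2 that propagates. In \eqref{PS2} the equation for $f$ is $-cf'-f''=g(f,m_S)$ on \emph{all} of $\mathbb{R}$; there is no indicator cutting the reaction back to $g(f,0)$ outside $[0,L]$. Since $m_S$ diffuses, it is strictly positive everywhere, so the $f$-equation is fully non-autonomous on the whole line, not only on $[0,L]$. The paper flags this explicitly as the main new difficulty compared to \eqref{PS1}. Two concrete consequences: (i) the sub-solution cannot simply be the one from the killing problem restricted to $(-\infty,0)$; the paper instead fixes a small level $m\in(0,m_0)$ at which $g(\cdot,m)$ is still bistable with positive mass, builds a front $\psi_-$ for $-\psi_-''=g(\psi_-,m)$, and then \emph{translates it far to the left} to the region where $m_S(x)<m$, so that $g(\psi_-,m)\le g(\psi_-,m_S(x))$ there; (ii) because that sub-solution only reaches $1-\varepsilon_m$ at $-\infty$, one must separately argue that the solution $f$ actually satisfies $f(-\infty)=1$ rather than some smaller equilibrium --- the paper does this by a $\liminf/\limsup$ dichotomy using $m_S(x)\to 0$ as $x\to-\infty$. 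Your super-solution side survives (since $g(f,m_S)\le g(f,0)$ the error has the right sign, and the paper indeed takes the constant $1$ on $(-\infty,0)$ and works on a possibly larger interval $[0,L_*]$), but your gluing description and your ``main obstacle'' paragraph both assume autonomy outside $[0,L]$, which is false and hides exactly the steps that require care.
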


For both strategies, we have succeeded in generating a traveling wave for any (negative) speed which goes in the opposite sense than the unique "natural" traveling wave. In both cases, the proofs are based on the construction of a sub-solution $\phi_-$ and a super-solution $\phi_+$ to \eqref{P'} that are right-ordered (i.e. $\phi_- < \phi_+$). However, each strategy has its own technical difficulties and therefore, we present each strategy separately. We introduce all the necessary tools for each strategy in the corresponding sections. \\
We underline that since the first equation is easier to work with, we have obtained a complete description of the traveling waves, in particular we know what happens for the critical case $L=\Lambda(c)$. We also emphasize that we did not expect to obtain the dichotomy $c> - 2\sqrt{g'(\alpha)} $ and $c\leq - 2\sqrt{g'(\alpha)}$. It relies on the fact that in a neighbourhood of $\pm \infty$, we can understand the equation as an autonomous equation. Using this, the tails at $\pm\infty$ of the traveling waves are unique. Moreover, for $c\leq - 2\sqrt{g'(\alpha)}$, there exists a traveling wave $u_{\mathrm{KPP}}$ which connects $0$ to $\alpha$ (solution of a Fisher-KPP type equation). With these two remarks in mind, we prove that for $c\leq - 2\sqrt{g'(\alpha)}$, the tails of the traveling waves at $+\infty$ behave like $u_{\mathrm{KPP}}$. Without the technical assumption $g$ convex into $(0, \alpha)$, we have to proceed case by case. However, the strategy seems to be robust if we know the existence of traveling waves which connect $0$ to $\alpha$ with a negative speed. \\
Contrary to the first equation, the second equation is more difficult to work with because it is a fully non-autonomous system. Indeed, $m_S$ has its own dynamics (independent of $f$) but the sterile males spread on the whole domain $\mathbb{R}$. Even if we \textit{act} only on a small part of the domain, this spreading makes the equation on $f$ fully non-autonomous (even by parts).

\subsection{State of the art}

Bistable equations to model propagation phenomena with an \textit{Allee effect} were initially introduced in the pioneer work \cite{aronsonweinber}, where the existence of a traveling wave solution which connects the stable states $0$ and $1$ is established. Since this work, plenty of works study variations of this problem as \cite{TW, TW2}. \\
To our knowledge the mathematical study of an action on a small moving interval to eradicate an invasive population of invasive species has not been addressed. The previous works focused on the specific case $c=0$. In this specific case, the idea is not to eradicate the population but block the front propagation. One of the first articles which focused on this kind of mathematical question is \cite{LewisKeener} (see also \cite{Pauw}). In this paper, the authors assume that in an interval the reaction term is $0$. Using a phase plane analysis, they prove the existence of a blocking if the size of this interval is large enough. We mention also the papers \cite{Chapuisat,NadinStrugarek} where blocking in biological systems is analyzed. In a different setting, a blocking strategy is studied in \cite{BerRodRyz}. The framework of this article is social sciences. The authors investigate the employment of finite resources to prevent the invasion of criminal activity. This latter article was adapted by one of its authors \cite{Rodriguez} to prevent invasion phenomena in biology. We underline that in \cite{Rodriguez}, even if the author considers sterile male releases, the article did not take into account the spread of such sterile males. Therefore, from a mathematical point of view, these models are closer to our first strategy than the second one. The second strategy with $c=0$, i.e. the sterile insect technique used as a barrier to block re-infestation is studied by two of the authors in \cite{almeida2020sterile}.
By taking $c=0$, we recover the main results of \cite{BerRodRyz}, \cite{Rodriguez} and \cite{almeida2020sterile}. Finally, we quote \cite{Opt-Trelat-Zhu-Zua1, Opt-Trelat-Zhu-Zua2} which deal also with the sterile males method to prevent invasion. In these two articles, the authors focus on an optimal problem. The aim is to optimize the releases of sterile males along time in order to minimize a cost-function. This cost-function takes advantage of the existence of traveling wave solutions which drive the solution to $0$. The main difference with our work is that we do not investigate any variation of the releases in time whereas the authors do not consider any spatial structure on the releases of sterile males. 

\subsection{Outline of the paper}

In section \ref{SectionPS1}, we focus on the killing strategy and we prove Theorem \ref{mainPS1}. Section \ref{SectionPS2} is devoted to the sterile male strategy and the proof of Theorem \ref{mainPS2}. We present numerical results that illustrate our results in section \ref{SectionNum} for both strategies. The second strategy is simulated for the application that we have introduced in section \ref{BiologicalMotivation}. Finally, we end this article with a conclusion and some perspectives in section \ref{SectionConclusion}.

\section{Study of the killing strategy} \label{SectionPS1}

In a first subsection, we introduce all the definitions, tools and intermediate results needed in the proof of Theorem \ref{mainPS1}. Subsections \ref{subsec:proof1} and \ref{subsec:proof2} are devoted to the proof of the first two points of Theorem \ref{mainPS1}. The rest of this section focuses on the critical case $L= \Lambda(c)$.

\subsection{Intermediate results}

We detail here some definitions and intermediate results needed in the proof of Theorem \ref{mainPS1}. The proofs of these results are postponed to subsection \ref{subsec:critiq}.

We begin by establishing that there exists a set of parameters such that \eqref{PS1} admits a solution. 

\begin{proposition}\label{intermediate1}
For any speed $c<0$, there exists a size $L_0$ such that \eqref{PS1} admits a solution for parameters $(c, L)$ with $L>L_0$. 
\end{proposition}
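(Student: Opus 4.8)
The plan is to produce, for each fixed $c<0$, an ordered pair consisting of a sub-solution $\phi_-$ and a super-solution $\phi_+$ of \eqref{PS1} with $\phi_-\le\phi_+$ on $\mathbb{R}$, $\phi_\pm(-\infty)=1$ and $\phi_\pm(+\infty)=0$, valid as soon as $L$ exceeds an explicit threshold $L_0=L_0(c)$; then a solution of \eqref{PS1} lying between $\phi_-$ and $\phi_+$ is obtained by the usual sub-/super-solution scheme: after the quasi-monotone shift $f\mapsto f+Ku$ one runs a monotone iteration between $\phi_-$ and $\phi_+$ on truncated intervals $[-R,R]$ with $u_R(\pm R)$ chosen between $\phi_-(\pm R)$ and $\phi_+(\pm R)$, and lets $R\to+\infty$ using interior elliptic estimates and a diagonal extraction. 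The resulting $u$ is $C^1$ (the right-hand side of \eqref{PS1} is bounded and discontinuous only in $\xi$) and, being squeezed between $\phi_-$ and $\phi_+$, automatically satisfies $u(-\infty)=1$ and $u(+\infty)=0$.

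For the super-solution I would take $\phi_+(\xi):=\min\{1,e^{\nu\xi}\}$, where $\nu<0$ is the negative root of $\nu^2+c\nu+(g'(0)+\varepsilon)=0$ for a small $\varepsilon>0$ (so $-c\nu-\nu^2=g'(0)+\varepsilon$; such a negative root exists since $g'(0)+\varepsilon<0$). On $(-\infty,0]$ one has $\phi_+\equiv1$, a super-solution of the bistable part because $g(1)=0$; at $\xi=0$ the slope drops from $0$ to $\nu<0$, a concave corner, which is the right sign for a super-solution. On $(0,L)$ one has $-c\phi_+'-\phi_+''=(g'(0)+\varepsilon)\phi_+\ge-\mu\phi_+$, because \eqref{H2S1} gives $g'(0)\ge-\mu$ (divide $-\mu u\le g(u)$ by $u$ and let $u\to0^+$). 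On $(L,+\infty)$ the values of $\phi_+$ lie in $(0,e^{\nu L}]$, and the needed inequality $(g'(0)+\varepsilon)\phi_+\ge g(\phi_+)$ amounts to $g(s)/s\le g'(0)+\varepsilon$ for $0<s\le e^{\nu L}$; since $g\in C^1$ and $g(s)/s\to g'(0)$ as $s\to0^+$, there is $\delta(\varepsilon)\in(0,1)$ with $g(s)/s\le g'(0)+\varepsilon$ on $(0,\delta(\varepsilon)]$, so the inequality holds as soon as $e^{\nu L}\le\delta(\varepsilon)$, i.e. $L\ge L_0:=\nu^{-1}\ln\delta(\varepsilon)>0$. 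This is exactly where the largeness of $L$ is forced.

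For the sub-solution I would use the \emph{overshooting} trajectory of the bistable ODE. Let $\bar u$ solve $-c\bar u'-\bar u''=g(\bar u)$ along the branch of the unstable manifold of $(u,u')=(1,0)$ lying in $\{u<1\}$. Multiplying by $\bar u'$, the energy $E:=\tfrac12(\bar u')^2+G(\bar u)$, $G(u):=\int_0^u g$, obeys $E'=-c(\bar u')^2\ge0$ and $E(-\infty)=G(1)=\int_0^1 g>0$; since $G$ decreases then increases on $[0,1]$ with $G(0)=0<G(1)$, one has $G\le G(1)$ there, whence $\tfrac12(\bar u')^2\ge G(1)-G(\bar u)>0$ for $\bar u\in[0,1)$. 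Thus $\bar u$ is strictly decreasing, and $|\bar u'|$ stays bounded below by a positive constant once $\bar u\le1-\eta$, so $\bar u$ reaches the level $0$ after a finite $\xi$-interval; we normalise that point to be $\xi=0$, with $\bar u'(0)<0$. Now set $\phi_-:=\bar u$ on $(-\infty,0]$ and $\phi_-:=0$ on $[0,+\infty)$: on $(-\infty,0)$ it is an exact (hence sub-) solution of the bistable part; at $\xi=0$ the slope jumps up from $\bar u'(0)<0$ to $0$, a convex corner, which is admissible for a sub-solution; and $\phi_-\equiv0$ on $(0,+\infty)$ is a sub-solution of both the death part and the bistable part. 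Finally $\phi_-(-\infty)=1$, $\phi_-(+\infty)=0$, and $\phi_-\le\phi_+$ everywhere (on $(-\infty,0]$ because $\bar u\le1$, on $(0,+\infty)$ because $0<e^{\nu\xi}$), so $(\phi_-,\phi_+)$ is the required pair and the scheme above yields a solution of \eqref{PS1} for every $L>L_0$.

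The main obstacle, and the true content of the statement, is the super-solution inequality on $(L,+\infty)$: it pins the decay rate of $\phi_+$ near $0$ just above the value $\lambda<0$ solving $-c\lambda-\lambda^2=g'(0)$, which is compatible with the death rate $\mu$ on $(0,L)$ only because $g'(0)\ge-\mu$, and which forces $e^{\nu L}$ to be small, i.e. $L$ large. The second, more geometric, ingredient is that $\int_0^1 g>0$ makes the unstable branch of $(1,0)$ for the bistable equation with speed $c<0$ overshoot the level $0$ after a finite $\xi$-interval; this is what allows the truncation-at-$0$ of that trajectory to serve as a sub-solution, the corner then pointing the admissible way, and it is also the geometric fact underlying an alternative proof by phase-plane shooting (route the unstable manifold of $(1,0)$ through the linear flow on $(0,L)$ onto the stable manifold of $(0,0)$, using the factor $e^{\nu_+ L}$ as the free parameter).
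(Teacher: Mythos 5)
Your proposal is correct, and it follows the same overall strategy as the paper (an ordered sub-/super-solution pair plus the monotone iteration of \cite{smoller}), but both barriers are built differently. For the super-solution the paper proceeds in three pieces: the constant $1$ on $\mathbb{R}_-$, the explicit hyperbolic-function solution of the linear death equation $-cv_1'-v_1''=-\mu v_1$ on $(0,L)$ normalized so that $v_1(0)=1$ and $v_1'(L)=0$ (Lemma \ref{lemmasuper1}, which is where $L$ is chosen large), and a stable-manifold trajectory of $(0,0)$ for the bistable ODE on $(L,+\infty)$ (Lemma \ref{Lemmasuper2}). You instead use the single profile $\min\{1,e^{\nu\xi}\}$ with $\nu$ the negative root of $\nu^2+c\nu+g'(0)+\varepsilon=0$; this is more economical and has the merit of isolating exactly where the hypotheses enter — \eqref{H2S1} is used only through $g'(0)\ge-\mu$ so that the same exponential is a super-solution across the killing zone, and the largeness of $L$ is used only to push $e^{\nu L}$ into the region where $g(s)\le(g'(0)+\varepsilon)s$. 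The price is that your $L_0$ is tied to the linearization at $0$, whereas the paper's version of the super-solution is reused later in the quantitative study of $\Lambda(c)$. For the sub-solution the paper takes the speed-zero trajectory $-\chi''=g(\chi)$ descending from $1$ and hitting $0$ at a finite point (a sub-solution because $-c\chi'\le0$ for $c<0$ and $\chi'\le0$), while you take the speed-$c$ unstable-manifold branch of $(1,0)$ and prove the finite-distance overshoot by the energy identity $E'=-c(\bar u')^2\ge0$; both arguments rest on $\int_0^1 g>0$ and both corners point the admissible way, so either choice is fine.
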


The basic idea is to prove that there exists a sub-solution $\psi_-$ and a super-solution $\psi_+$ such that 
\begin{equation}\label{sub-super}
    \psi_- \leq \psi_+, \quad \psi_-(-\infty) = \psi_+(-\infty) = 1 \quad \text{ and } \quad \psi_-(+\infty) = \psi_+(+\infty) = 0.
\end{equation}

Since, the reaction term is singular for  $x \in \left\lbrace 0, L \right\rbrace$, we recall the definitions of sub- (resp. super-) solutions (see \cite{Pauw}):

\begin{definition}[Sub- and super-solution]
A function $\psi_- \in C^2(\mathbb{R} \backslash \left\lbrace 0, L \right\rbrace )$ is a sub-solution to \eqref{PS1} if it satisfies 
\[ -c \psi_-' - \psi_-'' \leq g(\psi_-) 1_{]0,L[^c} - \psi_- 1_{]0,L[} \quad \text{ and } \quad \underset{ x < \xi}{\underset{ x \to \xi}{\lim}} \psi_-'(x) \leq \underset{x > \xi}{\underset{ x \to \xi}{\lim}} \psi_-'(x) \  \text{ for } \  \xi \in \left\lbrace 0,L \right\rbrace\]
(resp. $\psi_+$ is a super solution if it satisfies the reverse inequalities than above). 
\end{definition}

Since, in general, the solutions of \eqref{PS1} might be not unique, we need to select a "proper" solution. With this in mind, we define a solution as follows:
\begin{definition}\label{intermediate4}
We define $u$ as a solution of \eqref{PS1} with parameters $(c,L)$ as the supremum of the sub-solutions of equation \eqref{PS1}
\[ i.e. \qquad u(x) = \sup \left\lbrace \psi_-(x), \quad \text{ with } \psi_- \text{ a sub-solution of } \eqref{PS1} \right\rbrace.\]
\end{definition}
 
This solution is well defined according to the construction by the sub- and super-solution technique (see \cite{smoller}). We recall that $u$ is the minimal non-trivial solution. In other words, if $u$ is the solution of \eqref{PS1} with parameters $(c,L)$ and $v$ is an other solution which satisfies $0 \leq v \leq u$ in $\mathbb{R}^+$ then we deduce that $v=0$ or $v=u$. Then, from any solution of \eqref{PS1} with parameters $(c,L)$, we can construct a solution of \eqref{PS1} with parameters $(c, L')$ and $L'>L$ thanks to 
\begin{lemma}\label{intermediate3}
Let $(c,L_*)$ be a set of parameters such that \eqref{PS1} admits a solution $\overline{u}$. Then for any $L>L_*$, $\overline{u}$ is a super-solution to \eqref{PS1} with the set of parameters $(c, L)$.
\end{lemma}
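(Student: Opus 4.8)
The plan is to show that $\overline{u}$, which solves \eqref{PS1} with parameters $(c, L_*)$, automatically satisfies the super-solution inequalities when we enlarge the action interval from $(0, L_*)$ to $(0, L)$ with $L > L_*$. First I would recall that $\overline{u}$ solves exactly
\[ -c\overline{u}' - \overline{u}'' = g(\overline{u}) 1_{\{x<0,\, x>L_*\}} - \mu 1_{\{0<x<L_*\}} \overline{u} \]
with $\overline{u}(-\infty) = 1$, $\overline{u}(+\infty) = 0$, and in particular $\overline{u} \in C^2(\mathbb{R}\setminus\{0, L_*\})$ with $\overline{u}' $ continuous at $0$ and $L_*$ (since $\overline{u}$ is a genuine solution there is no jump in the derivative; equality holds in the matching condition). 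The key pointwise comparison is the assumption \eqref{H2S1}, namely $-\mu u \le g(u)$ for the relevant range of $u$: on the enlarged kill-zone portion $(L_*, L)$ the right-hand side of the new equation is $-\mu \overline{u}$, which is \emph{smaller than or equal to} $g(\overline{u})$, the right-hand side that $\overline{u}$ actually satisfies there. On $(0, L_*)$ and on $(-\infty, 0) \cup (L, +\infty)$ the right-hand side is unchanged, so equality (hence the super-solution inequality) holds.

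Concretely, I would check the super-solution inequality $-c\phi' - \phi'' \ge g(\phi)1_{]0,L[^c} - \mu\phi 1_{]0,L[}$ with $\phi = \overline{u}$ region by region. On $(-\infty, 0)$ and on $(L, +\infty)$: here both equations have right-hand side $g(\overline{u})$, so $-c\overline{u}' - \overline{u}'' = g(\overline{u})$ and the inequality holds with equality. On $(0, L_*)$: both equations have right-hand side $-\mu\overline{u}$, so again equality. On $(L_*, L)$: the old equation gives $-c\overline{u}' - \overline{u}'' = g(\overline{u})$, while the new equation requires $-c\overline{u}' - \overline{u}'' \ge -\mu\overline{u}$; since $g(\overline{u}) \ge -\mu\overline{u}$ by \eqref{H2S1}, the inequality holds. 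For the derivative-matching conditions at the new singular points $\{0, L\}$: at $x=0$ nothing has changed and $\overline{u}'$ is continuous there (a solution satisfies equality, which in particular gives the super-solution side $\lim_{x\to 0^-}\overline{u}' \ge \lim_{x\to 0^+}\overline{u}'$); at $x = L$, the function $\overline{u}$ is $C^2$ in a neighbourhood of $L$ (since $L \neq L_*$ and $L \neq 0$), so $\overline{u}'$ is continuous there and the matching inequality holds trivially. Finally the boundary conditions $\overline{u}(-\infty) = 1$, $\overline{u}(+\infty) = 0$ are inherited directly.

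I do not anticipate a genuine obstacle here — the lemma is essentially an observation that shrinking the reaction term pointwise (replacing $g(\overline{u})$ by $-\mu\overline{u} \le g(\overline{u})$ on the added strip) turns a solution into a super-solution. The only point requiring a little care is the regularity/matching bookkeeping at the singular points: one must note that $L_*$ is no longer a singular point of the \emph{new} equation but $\overline{u}$ might only be $C^1$ (not $C^2$) across $x = L_*$; however, this does not affect the super-solution property because $L_*$ is an interior point of one of the smooth regions $(0,L)$ for the new configuration, and the definition of super-solution only imposes the $C^2$ requirement on $\mathbb{R}\setminus\{0,L\}$ — so in fact one should remark that $\overline{u}$ is $C^1$ but the differential inequality is understood in the a.e. (or one-sided) sense at $L_*$, which is harmless, or alternatively note that $\overline u$ being a solution at $L_*$ for the old problem forces $\overline u\in C^1$ there and the new right-hand side $-\mu\overline u$ is continuous, so elliptic regularity upgrades $\overline u$ to $C^2$ across $L_*$ for the new equation on $(0,L)$. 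Either way the conclusion stands.
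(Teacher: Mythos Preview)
Your proposal is correct and in fact more detailed than what the paper provides: the paper states Lemma~\ref{intermediate3} without proof, treating it as an immediate consequence of hypothesis~\eqref{H2S1} (which guarantees $-\mu \overline{u} \le g(\overline{u})$ on the added strip $(L_*,L)$). Your region-by-region verification and handling of the derivative matching conditions are exactly right.

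One small correction to your final paragraph: the elliptic regularity remark does not actually work, because $\overline{u}$ is \emph{not} a solution of the new equation on $(0,L)$ --- it only satisfies the super-solution inequality there (with strict inequality on $(L_*,L)$ in general), so you cannot bootstrap to $C^2$ at $L_*$. The first option you give is the correct one: $\overline{u}$ is $C^1$ at $L_*$ with a possible jump in $\overline{u}''$, and one simply enlarges the (finite) set of singular points to $\{0,L_*,L\}$, checking that the derivative matching condition holds with equality at $L_*$. This is consistent with how the paper itself builds piecewise super-solutions (e.g.\ in Proposition~\ref{proposuperPS1}), so the definition of super-solution is implicitly understood to allow finitely many additional corners.
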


The set of solutions are naturally ordered with respect to $L$. Moreover, all the solutions are decreasing. We sum up these two last results into the following Proposition:

\begin{proposition}\label{intermediate5}
The following assertions hold true:
\begin{enumerate}
    \item Let $L_1<L_2$ be such that there exists $u_1$, $u_2$ two solutions of \eqref{PS1} with parameters $(c,L_1)$ and $(c, L_2)$. Then, we have $u_2 \leq u_1$. 
    \item If there exists a solution $u$ of \eqref{PS1} with parameters $(c,L)$ then $u$ is decreasing.
\end{enumerate}
\end{proposition}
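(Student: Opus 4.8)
The plan is to prove both assertions by exploiting the variational characterization of solutions given in Definition \ref{intermediate4} (solution = supremum of sub-solutions) together with Lemma \ref{intermediate3}, which already tells us that a solution for parameters $(c,L_*)$ is a super-solution for any larger interval.

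For assertion (1), suppose $L_1<L_2$ and let $u_1,u_2$ be the solutions with parameters $(c,L_1)$ and $(c,L_2)$. By Lemma \ref{intermediate3}, $u_1$ is a super-solution to \eqref{PS1} with parameters $(c,L_2)$. Now recall that $u_2$, by Definition \ref{intermediate4}, is the supremum of all sub-solutions of \eqref{PS1} with parameters $(c,L_2)$; so to conclude $u_2\leq u_1$ it suffices to check that every such sub-solution $\psi_-$ lies below the super-solution $u_1$. This is the standard comparison statement in the sub/super-solution framework: a sub-solution that is "right-ordered" below a super-solution stays below it (one uses the sliding/comparison argument, noting the hinge conditions on $\psi_-'$ and $u_1'$ at the singular points $x\in\{0,L_2\}$ point the right way, and that $\psi_-(-\infty)\leq u_1(-\infty)$, $\psi_-(+\infty)\leq u_1(+\infty)$ cannot be violated because the excess $\psi_- - u_1$ would attain a positive interior maximum where the equation forces a contradiction via $g$'s local behaviour). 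Hence $u_2=\sup_{\psi_-}\psi_-\leq u_1$.

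For assertion (2), fix a solution $u$ of \eqref{PS1} with parameters $(c,L)$ and let $h>0$. Consider the right-shifted function $x\mapsto u(x-h)$. Since the problem on $(-\infty,0)\cup(L,\infty)$ is autonomous while on $(0,L)$ the death term $-\mu u$ is also autonomous (only the position of the acting window matters), one checks directly that $u(\cdot - h)$ is a sub-solution to the equation with the window shifted to $(h,L+h)$; equivalently, after relabelling, $u(\cdot-h)$ can be compared against $u$ on the common region using the monotonicity of the configuration in $L$ established in (1) together with Lemma \ref{intermediate3}, and one deduces $u(x-h)\leq u(x)$ for all $x$, i.e. $u$ is non-increasing. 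Strict monotonicity then follows from the strong maximum principle applied to $u - u(\cdot - h)$ on each of the three intervals where the equation is $C^2$: if $u$ were constant on a subinterval it would have to be constantly $0$, $\alpha$, or $1$ there, which is incompatible with the boundary conditions $u(-\infty)=1$, $u(+\infty)=0$ and the hinge conditions at $0$ and $L$ (a careful bookkeeping of the finitely many interfaces rules out a locally constant piece).

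The main obstacle I expect is the careful handling of the singular points $x\in\{0,L\}$ (and $\{0,L_2\}$ in part (1)): the comparison argument and the strong maximum principle have to be run piecewise on the three smooth intervals, and one must verify that the one-sided derivative (hinge) conditions in the definition of sub- and super-solutions are oriented so that a maximum of the difference cannot be "created" at an interface. Once the framework's comparison principle across these interfaces is in hand, both assertions are essentially immediate consequences of the $L$-monotonicity baked into Lemma \ref{intermediate3} and the extremal (minimal-solution) characterization in Definition \ref{intermediate4}.
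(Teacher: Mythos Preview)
Your argument for assertion (1) is fine and matches what the paper uses implicitly: by Lemma \ref{intermediate3}, $u_1$ is a super-solution for parameters $(c,L_2)$, and since $u_2$ is by Definition \ref{intermediate4} the \emph{minimal} solution (supremum of sub-solutions, lying below any given super-solution in the sub/super-solution construction), one gets $u_2\leq u_1$ directly.

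Your argument for assertion (2), however, has a genuine gap. First, a sign slip: $u(x-h)\leq u(x)$ for all $h>0$ would make $u$ non-\emph{decreasing}, not non-increasing. More seriously, the global sliding you propose cannot work because the equation is not translation-invariant. If $v(x):=u(x-h)$, then on $(0,h)$ one has $-cv'-v''=g(v)\geq -\mu v$, so $v$ is a \emph{super}-solution of the $(c,L)$-equation there; but on $(L,L+h)$ one has $-cv'-v''=-\mu v\leq g(v)$, so $v$ is a \emph{sub}-solution there. Thus $u(\cdot-h)$ is neither a global sub- nor a global super-solution of \eqref{PS1} with parameters $(c,L)$, and no comparison with $u$ follows. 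Your attempt to route this through assertion (1) does not help either: (1) compares solutions for different \emph{sizes} $L$ of the window, not different \emph{positions}, and shifting does not change the size.

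The paper's proof avoids this obstruction by splitting the argument. On the autonomous pieces $(-\infty,0)$ and $(L,+\infty)$ it runs a genuine sliding argument (there the equation \emph{is} translation-invariant, so $u(\cdot-\tau)$ is a solution of the same equation on those half-lines, and one slides until first contact). On the middle interval $(0,L)$ it argues by contradiction using the \emph{minimality} of $u$: if $u$ were not monotone on $(0,L)$, there are finitely many sign changes of $u'$ (a zero of $u'$ with $u''=0$ would force $-\mu u=0$), and from a lowest local minimum one builds an explicit super-solution $\overline{u}\leq u$ by flattening $u$ on an appropriate plateau. This produces a strictly smaller nontrivial solution, contradicting that $u$ is the minimal one. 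Your plan does not contain this second ingredient, and without it the monotonicity on $(0,L)$ is unproven.
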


Now that we know that the set of solutions of \eqref{PS1} (where $L$ is seen as a free parameter) is ordered, one can introduce
\begin{equation}
    \label{lambda}
    \Lambda(c)  = \inf \left\lbrace  L>0, \quad  \text{there exists a solution to \eqref{PS1} with parameters } (c,L) \right\rbrace.
\end{equation}

In the next proposition, we provide the main properties of $\Lambda$. 

\begin{proposition}\label{intermdiate6}
The function $\Lambda$ is well defined for all $c<0$. Moreover, the following assertions hold true:
\begin{enumerate}
    \item There exists $\Lambda_0>0$ such that 
\[ \Lambda_0 \leq \underset{c \in ]-\infty, 0]}{\inf} \Lambda(c). \]
    \item The function $\Lambda$ is decreasing with respect to $c$. 
    \item $\underset{c \to -\infty}{\lim} \Lambda(c) =  +\infty$,
    \item For $L = \Lambda(c)$, we distinguish two cases:
    \begin{description}
    \item [Case 1 :]  There does not exist a decreasing traveling wave which connects $0$ and $\alpha$ with speed $c$. Then \eqref{PS1} with parameters $(c, \Lambda(c))$ admits a solution $u$ and we have $\alpha \leq u(\Lambda(c)) \leq \beta$ and $u'(\Lambda(c))= 0$.
    \item [Case 2 :]  There exists a decreasing traveling wave $u_{\mathrm{TW}}$ which connects the unstable state $\alpha$ and the stable state $0$ with speed $c$, i.e. a solution of
    \begin{equation}\label{TWalpha}
    \left\lbrace
    \begin{aligned}
        &-c u_{\mathrm{TW}}' - u_{\mathrm{TW}}'' = g(u_{\mathrm{TW}}), \\
        &u_{\mathrm{TW}}(-\infty) = \alpha \quad \text{ and } \quad u_{\mathrm{TW}}(+\infty) = 0.
    \end{aligned}
    \right.
    \end{equation}
    Then \eqref{PS1} with parameters $(c, \Lambda(c))$ does not admit a solution. Moreover, we have
    \[ \underset{ L \to \Lambda(c)^+}{\lim} u_L(L) = \alpha \quad \text{ and } \quad \underset{L  \to \Lambda(c)^+}{\lim}u'_L(L) = 0.\]
    \end{description}
\end{enumerate}
\end{proposition}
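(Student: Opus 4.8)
The plan is to assemble the statement from the intermediate results already established (Propositions \ref{intermediate1}, \ref{intermediate5}, Lemma \ref{intermediate3}) together with a careful phase-plane analysis near $x=L$ and $x=+\infty$. First, for well-definedness: Proposition \ref{intermediate1} shows the set $\{L>0 : \eqref{PS1} \text{ has a solution with parameters }(c,L)\}$ is nonempty for every $c<0$, so $\Lambda(c)$ is a well-defined element of $[0,+\infty)$; Proposition \ref{intermediate5}(1) and Lemma \ref{intermediate3} show this set is an interval unbounded above, i.e. it is $(\Lambda(c),+\infty)$ or $[\Lambda(c),+\infty)$. For the uniform lower bound $\Lambda_0$ (item 1), I would argue by contradiction: if $\Lambda(c_n)\to 0$ along some sequence, then there are solutions $u_n$ on vanishingly small intervals $(0,L_n)$. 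On $(0,L_n)$ the equation is $-cu'-u''=-\mu u$, so $u$ can drop only by an amount controlled by $L_n$ and $|u'(0^+)|$; meanwhile on $(-\infty,0)$ the solution must connect $1$ at $-\infty$ down to $u(0)$, which by an energy/Hamiltonian argument (multiply by $u'$ and integrate, using $\int_0^1 g>0$) forces $u(0)$ to be bounded away from $1$ — in fact $u(0)$ cannot exceed $\beta$ — and forces a definite amount of "descent budget". A quantitative version of this (the kink region has width at least some $\Lambda_0>0$ independent of $c$) gives the bound; this is the step I expect to require the most care, since one must make the heuristic "you cannot cross the hump of the potential in an arbitrarily short killing zone" fully rigorous uniformly in $c\le 0$.

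For monotonicity in $c$ (item 2), I would use a comparison/sliding argument: for $c_1<c_2\le 0$, take a solution $u$ of \eqref{PS1} with parameters $(c_2,L)$ for some $L>\Lambda(c_2)$ and check that, because $u$ is decreasing (Proposition \ref{intermediate5}(2)), $u'\le 0$ and hence $-c_1 u' \ge -c_2 u'$, so $u$ is a super-solution for the problem with speed $c_1$ and the same $L$; producing also a sub-solution (e.g. the trivial-at-$+\infty$ minimal construction, or rescaling the one from Proposition \ref{intermediate1}) then yields a solution for $(c_1,L)$, whence $\Lambda(c_1)\le L$; letting $L\downarrow\Lambda(c_2)$ gives $\Lambda(c_1)\le\Lambda(c_2)$. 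For item 3, $\lim_{c\to-\infty}\Lambda(c)=+\infty$: on $(0,L)$ the solution of $-cu'-u''=-\mu u$ with $u$ decreasing and $c\to-\infty$ behaves like slow exponential relaxation — the characteristic roots are $0$ and $-c>0$, so to bring $u$ from its value at $0$ down near $0$ at $x=L$ one needs $L$ of order $|c|$ at least (the decaying mode is $e^{-cx}$... more precisely one tracks that $u(L)/u(0)$ stays bounded below unless $L\to\infty$). Combined with the fixed "descent budget" from the left side (which does not help since $u(0)\le\beta<1$ is all we get), this forces $\Lambda(c)\to+\infty$; alternatively one can invoke item 4's characterization and the known speed of KPP/bistable fronts.

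The heart of the proposition is item 4, the dichotomy at $L=\Lambda(c)$. The strategy: let $L_n\downarrow\Lambda(c)$ and $u_n$ the corresponding minimal solutions; by Proposition \ref{intermediate5}(1) the $u_n$ increase pointwise to some limit $\bar u$, and by standard elliptic estimates (the equation is uniformly elliptic with bounded right-hand side) the convergence is $C^1_{loc}$ on $\mathbb{R}\setminus\{0\}$ with the correct transmission condition at $x=0$, so $\bar u$ solves $-c\bar u'-\bar u''=g(\bar u)$ on $(\Lambda(c),\infty)$, $=-\mu\bar u$ on $(0,\Lambda(c))$, $=g(\bar u)$ on $(-\infty,0)$, with $\bar u(-\infty)=1$. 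The only question is the behavior at $+\infty$: either $\bar u(+\infty)=0$, in which case $\bar u$ solves \eqref{PS1} with parameters $(c,\Lambda(c))$ (Case 1), or $\bar u$ does not decay to $0$. In the latter case, since on $(\Lambda(c),\infty)$ the equation is the autonomous bistable ODE and $\bar u$ is bounded, decreasing (monotone limit of decreasing functions) and cannot reach $0$, phase-plane analysis of $-cu'-u''=g(u)$ shows the only possibility is that $\bar u(+\infty)=\alpha$ with $\bar u$ on $(\Lambda(c),\infty)$ being exactly the heteroclinic from some value down to $\alpha$ — and such a trajectory connecting into $\alpha$ as $x\to+\infty$ with $u'<0$ exists precisely when there is a decreasing traveling wave joining $\alpha$ to $0$... wait, rather: the trajectory entering the saddle-type/node structure at $\alpha$ exists iff $-2\sqrt{g'(\alpha)}<c$ is violated, i.e. the dichotomy is governed by whether $\alpha$ is reached monotonically. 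I would make this precise by linearizing at $u=\alpha$: the roots of $\lambda^2+c\lambda+g'(\alpha)=0$ are real (monotone approach possible) iff $c^2\ge 4g'(\alpha)$, i.e. $c\le-2\sqrt{g'(\alpha)}$; this is exactly the case where a decreasing TW connecting $\alpha$ to $0$ exists (a Fisher–KPP-type front for the linearization at the unstable point $\alpha$), which is Case 2. In Case 2, one shows $\bar u(+\infty)=\alpha$ (so \eqref{PS1} at $L=\Lambda(c)$ has \emph{no} solution decaying to $0$, since the minimal candidate fails) and, since $u_L(L)\to\bar u(\Lambda(c))$ and by the transmission/phase-plane structure $\bar u(\Lambda(c))=\alpha$ with $\bar u'(\Lambda(c))=0$ (the trajectory at $L=\Lambda(c)$ is tangent, entering the region $u<\alpha$ with zero slope at the threshold — this is what "critical" means), we get the two stated limits. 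In Case 1 (no such monotone-$\alpha$-connection, $-2\sqrt{g'(\alpha)}<c\le 0$), $\bar u$ must decay to $0$ (it cannot limit on $\alpha$ monotonically, nor oscillate while staying a monotone limit of solutions), so it solves the problem; evaluating at $x=\Lambda(c)$ and using that $\bar u$ is the \emph{boundary} solution — it sits at the transition between "$u(+\infty)=1$" (too small $L$, by Theorem \ref{mainPS1}(1)) and "$u(+\infty)=0$" — pins $\bar u'(\Lambda(c))=0$ and $\alpha\le\bar u(\Lambda(c))\le\beta$, the upper bound $\beta$ coming from the Hamiltonian inequality on $(-\infty,0)$ and the lower bound $\alpha$ because below $\alpha$ the potential pushes toward $0$ so the solution would fall through to $0$ for slightly smaller $L$, contradicting minimality of $\Lambda(c)$. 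The main obstacle, as flagged, is running the phase-plane classification at $u=\alpha$ rigorously and identifying precisely that the threshold slope at $x=\Lambda(c)$ vanishes in both cases — i.e. showing $\bar u$ is genuinely the "grazing" orbit — which is where the convexity assumption on $g$ in $(0,\alpha)$ (used in Theorem \ref{mainPS1}) ultimately enters to rule out pathological non-monotone or multiple-crossing behavior.
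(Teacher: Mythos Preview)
Your proposal contains a genuine sign error in item~2 and a substantive gap in item~4; the other items are broadly on track but differ from the paper's arguments.

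\textbf{Item 2 (monotonicity): the comparison goes the wrong way.} For $c_1<c_2\le 0$ and $u'\le 0$ you claim $-c_1 u'\ge -c_2 u'$; this is false (e.g.\ $c_1=-2,c_2=-1,u'=-1$ gives $-2<-1$). In fact a solution with the \emph{less} negative speed $c_2$ is a \emph{sub}-solution at speed $c_1$, not a super-solution, and your conclusion $\Lambda(c_1)\le\Lambda(c_2)$ is the opposite of what is stated. The correct direction (and the paper's): take $u$ solving \eqref{PS1} with parameters $(c_1,L)$ for $L>\Lambda(c_1)$; since $u'\le 0$, one checks $-c_2u'-u''\ge g(u)1_{(0,L)^c}-\mu u1_{(0,L)}$, so $u$ is a super-solution at speed $c_2$, whence $\Lambda(c_2)\le\Lambda(c_1)$.

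\textbf{Item 4 (critical case): the compactness route leaves the dichotomy unproved.} The paper does \emph{not} pass to a limit $L_n\downarrow\Lambda(c)$. It relies instead on the tail-uniqueness Lemma~\ref{MainLemmaPS1}: the restriction of any solution to $(L,+\infty)$ is, up to translation, a fixed trajectory of $-cu'-u''=g(u)$ in the phase plane connecting to $(0,0)$ along the stable manifold. Extending this trajectory backward in $x$, either it reaches a turning point $(\varphi_0,0)$ with $\alpha<\varphi_0\le\beta$ (no decreasing $\alpha\!\to\!0$ wave exists), and one builds the critical solution explicitly by shooting/matching at $x=0$ with $u(\Lambda(c))=\varphi_0$, $u'(\Lambda(c))=0$; or it limits on $\alpha$ at $-\infty$ (the wave $u_{\mathrm{TW}}$ exists), in which case every $u_L$ has $u_L(L)<\alpha$ and the paper constructs, for each $\kappa\in(0,1)$, a solution with $u_{L_\kappa}(L_\kappa)=\kappa\alpha$, forcing $u_L(L)\to\alpha$ and $u_L'(L)\to 0$ as $L\downarrow\Lambda(c)$ with no solution at the endpoint. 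Your limit argument does not supply this linkage: you assert that ``$\bar u(+\infty)=\alpha$'' occurs precisely when $u_{\mathrm{TW}}$ exists, but you give no mechanism for this equivalence (your linearization remark at $u=\alpha$ concerns the \emph{node/spiral} type of $(\alpha,0)$, which governs monotone approach to $\alpha$, not the existence of a heteroclinic from $\alpha$ to $0$; and it already invokes the convexity hypothesis that the Proposition avoids). Moreover, since $(\alpha,0)$ is repelling in forward $x$ for $c<0$, a decreasing $\bar u$ with $\bar u(+\infty)=\alpha$ must be \emph{identically} $\alpha$ on $[\Lambda(c),\infty)$, not a ``heteroclinic down to $\alpha$''; this also needs to be said and then tied back to why the $u_L$ cannot converge to a genuine solution.

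\textbf{Item 1.} Your ``descent-budget'' heuristic is plausible but you correctly flag it as the delicate step. The paper sidesteps it entirely: if $\Lambda(c)<L_0$ (the critical size for $c=0$ from \cite{BerRodRyz}), then a solution with parameters $(c,L_0/2)$, being decreasing, is a super-solution for the $c=0$ problem on $(0,L_0/2)$, contradicting Theorem~2.6 of \cite{BerRodRyz}. This reduction to the known $c=0$ case is much shorter than any uniform-in-$c$ estimate.
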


We underline that the last assertion will be useful to characterize numerically $\Lambda(c)$. Notice that the previous result is more general than Theorem \ref{mainPS1}. We recover the last statements of Theorem \ref{mainPS1} thanks to the following Corollary 

\begin{corollary}\label{Corollarygconv}
If $g$ is convex in $(0, \alpha)$ then \eqref{PS1} with parameters $(c, \Lambda(c))$ admits a solution if and only if $-2 \sqrt{g'(\alpha)} < c \leq 0$. 
\end{corollary}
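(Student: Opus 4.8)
The plan is to use Proposition \ref{intermdiate6}, which already reduces everything to understanding when there exists a decreasing traveling wave connecting the unstable state $\alpha$ to the stable state $0$ with speed $c$, i.e.\ a solution of \eqref{TWalpha}. Indeed, by the dichotomy in assertion (4) of that proposition, \eqref{PS1} with parameters $(c,\Lambda(c))$ admits a solution precisely in \textbf{Case 1}, namely when \eqref{TWalpha} has \emph{no} such decreasing traveling wave. So the corollary is equivalent to the claim: under the assumption that $g$ is convex on $(0,\alpha)$, a decreasing traveling wave of \eqref{TWalpha} with speed $c\le 0$ exists if and only if $c\le -2\sqrt{g'(\alpha)}$.

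First I would set up the phase-plane / ODE analysis for \eqref{TWalpha}. Near $x=+\infty$ we need $u_{\mathrm{TW}}\to 0$, where $g'(0)<0$, so the linearization is $-cu'-u''=g'(0)u$; the characteristic roots are real of opposite sign (since $g'(0)<0$), and the decaying branch is compatible with a positive decreasing tail for any $c$. The obstruction comes from the behaviour near $x=-\infty$, where $u_{\mathrm{TW}}\to\alpha$ with $g'(\alpha)>0$. Writing $u = \alpha - w$ with $w\to 0^+$, the linearization is $-c w' - w'' = -g'(\alpha) w$, with characteristic equation $r^2 + c r - g'(\alpha)=0$. For the orbit to approach $\alpha$ monotonically from below (so that $u$ is decreasing and stays below $\alpha$), we need the relevant root to be \emph{real}, which forces $c^2 \ge 4 g'(\alpha)$, i.e.\ (since we want the trajectory to leave $\alpha$ as $x$ increases, picking the branch $r = \tfrac{-c - \sqrt{c^2 - 4g'(\alpha)}}{2}>0$, using $c\le 0$) the condition $c \le -2\sqrt{g'(\alpha)}$. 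This is the Fisher--KPP type threshold: it is exactly the statement that the wave $u_{\mathrm{KPP}}$ connecting $0$ to $\alpha$ alluded to in the introduction exists iff $c\le -2\sqrt{g'(\alpha)}$.

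The substantive step is to turn this necessary linear condition into a genuine existence/nonexistence statement for the full nonlinear problem \eqref{TWalpha}, and this is where the convexity of $g$ on $(0,\alpha)$ enters. For nonexistence when $-2\sqrt{g'(\alpha)}<c\le 0$: if a decreasing connection existed, its left tail would have to spiral or oscillate around $\alpha$ by the complex-roots argument above, contradicting monotonicity — this direction needs only $g'(\alpha)>0$ and a standard stable/unstable manifold argument, no convexity. For existence when $c\le -2\sqrt{g'(\alpha)}$: I would construct the heteroclinic orbit in the $(u,u')$ phase plane by a shooting argument from the unstable manifold at $(\alpha,0)$, showing the trajectory reaches $(0,0)$ without leaving the strip $0<u<\alpha$; here convexity of $g$ on $(0,\alpha)$ gives the needed sign control (a sub/super-solution comparison, e.g.\ comparing with the line or with the exponential profile $\alpha(1-e^{\lambda x})$ where $\lambda$ is the real root), ensuring the orbit stays trapped and is monotone. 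Alternatively, and probably cleaner given the tools already in the paper, one can invoke the sub/super-solution method directly: convexity lets one build an explicit ordered pair of sub- and super-solutions for \eqref{TWalpha} on the relevant range, mirroring the construction used for Proposition \ref{intermediate1}.

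The main obstacle I expect is the \emph{existence} half for $c\le -2\sqrt{g'(\alpha)}$: controlling the global behaviour of the phase-plane trajectory (as opposed to its local tails) and verifying it does not exit $(0,\alpha)$ before converging to $0$. The convexity hypothesis is precisely what rescues this step — without it one has to argue case by case, as the authors themselves note in the discussion following Theorem \ref{mainPS1} — so the proof of the corollary should foreground how convexity of $g|_{(0,\alpha)}$ yields the comparison functions that trap the orbit. Once existence/nonexistence of the $0$-to-$\alpha$ wave is settled, the corollary follows immediately by feeding the dichotomy back into Proposition \ref{intermdiate6}(4).
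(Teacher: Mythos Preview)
Your proposal is correct and follows the same reduction as the paper: both feed the question into assertion~(4) of Proposition~\ref{intermdiate6}, so that the corollary becomes equivalent to the statement that a decreasing traveling wave for \eqref{TWalpha} exists iff $c\le -2\sqrt{g'(\alpha)}$.

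The difference is only in how the latter is established. The paper dispatches it in one line by citing Theorem~4.10 in Perthame's book: convexity of $g$ on $(0,\alpha)$ guarantees that, after the change $w=\alpha-u$, the reaction $h(w)=-g(\alpha-w)$ satisfies the KPP hypothesis $h(w)\le h'(0)w=g'(\alpha)w$, and the cited theorem then yields existence of the monostable front iff $|c|\ge 2\sqrt{g'(\alpha)}$. You instead sketch the phase-plane/shooting proof of that very KPP result from scratch --- the linearization at $\alpha$ for the necessary threshold, the oscillation argument for nonexistence above it, and the convexity-based trapping for existence below. Your route is more self-contained and makes explicit where convexity enters (exactly in the comparison that traps the orbit in $(0,\alpha)$), whereas the paper's route is much shorter but outsources the work to an external reference. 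Either is fine; just be aware that what you are outlining is precisely the classical KPP existence theorem, so in a write-up you could also simply invoke it.
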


We notice that it is the only statement where we have used the hypothesis $g$ is convex in $(0, \alpha)$. Furthermore, we underline that according to \cite{BerRodRyz}, if $u$ is a solution of \eqref{PS1} with parameters $(0, \Lambda(0))$, it follows that
\[ u(\Lambda(0) ) = \beta \quad \text{ and } \quad u'(\Lambda(0)) = 0.\]
We finish with a last proposition which characterizes the case $L < \Lambda(c)$. 

\begin{proposition}\label{intermediate7}
For any $L< \Lambda(c)$, there does not exists a solution of\eqref{PS1} with parameters $(c,L)$. Let $u$ be the solution of 
\[\left\lbrace 
\begin{aligned}
&-cu' - u'' = g(u) 1_{\left\lbrace x<0 , \ x>L  \right\rbrace} - \mu u 1_{\left\lbrace 0 < x  < L \right\rbrace}, \\
&u(-\infty) = 1
\end{aligned}
\right.\]
then we have $u(+\infty) = 1$. Moreover, there exists a unique $x_0 \in ]0,L[$ such that $u'(x) = 0$. 
\end{proposition}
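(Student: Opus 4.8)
This part is immediate: by the definition \eqref{lambda} of $\Lambda(c)$ as the infimum of those $L$ for which \eqref{PS1} with parameters $(c,L)$ admits a solution, no value $L<\Lambda(c)$ can belong to that set. From now on I work with the half-problem, and — mimicking Definition~\ref{intermediate4} — I define $u$ as the supremum of its sub-solutions. The constant $1$ is a super-solution of the half-problem (equality in the reaction zones, and $0\ge-\mu$ on $(0,L)$), so the sub/super-solution method provides a solution $u$ with $0\le u\le1$, of class $C^1(\mathbb{R})\cap C^2(\mathbb{R}\setminus\{0,L\})$ and $u(-\infty)=1$, and $u$ is moreover the minimal solution with these properties.

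\textbf{Phase-plane preliminaries.} On $(-\infty,0)$ the equation is the autonomous bistable one; for the associated planar system $(1,0)$ is a saddle (the linearisation has determinant $g'(1)<0$). Writing $G'=g$ and using the energy $E=\tfrac12(u')^2+G(u)$, which is nondecreasing because $c<0$ (one has $E'=-c(u')^2$), and $E(-\infty)=G(1)$, one gets $\tfrac12(u')^2=E-G(u)\ge G(1)-G(u)>0$ whenever $u<1$; hence the orbit issuing from $(1,0)$ has $u'<0$ on $\{u<1\}$. Since $u\not\equiv1$ on $(-\infty,0)$ (otherwise the linear equation $u''=\mu u+|c|u'$ on $(0,L)$ with Cauchy data $(1,0)$ would push $u$ above $1$ just to the right of $0$), $u$ is strictly decreasing on $(-\infty,0]$ and $u'(0)<0$. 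On $(0,L)$ the equation reads $u''=\mu u+|c|u'$, so at any zero of $u'$ one has $u''=\mu u>0$: $u'$ vanishes at most once on $(0,L)$, necessarily at a strict local minimum. On $(L,+\infty)$ the equation is again the autonomous bistable one; $E$ being nondecreasing and bounded (since $0\le u\le1$), the $\omega$-limit set of the trajectory reduces to a single equilibrium, so $u(+\infty)$ exists and $u(+\infty)\in\{0,\alpha,1\}$.

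\textbf{Identification of the limit.} The value $0$ is impossible, for then $u$ would be a solution of \eqref{PS1} with parameters $(c,L)$ and $L<\Lambda(c)$, contradicting the first part. The value $\alpha$ is excluded as follows. At $\alpha$ the linearised bistable system has both characteristic exponents with positive real part (their sum is $-c>0$, their product $g'(\alpha)>0$), so $\alpha$ is a repeller for the flow as $x\to+\infty$; thus $u(+\infty)=\alpha$ would force $u\equiv\alpha$ on $[L,+\infty)$, hence $u(L)=\alpha$ and $u'(L)=0$. Let $u_\ell$ be the half-problem solution for a killing interval of length $\ell$; by the comparison arguments behind Proposition~\ref{intermediate5} — for $\ell_1<\ell_2$, $u_{\ell_1}$ is a super-solution of the $(c,\ell_2)$-problem, thanks to \eqref{H2S1} on the extra sub-interval $(\ell_1,\ell_2)$ — the map $\ell\mapsto u_\ell$ is nonincreasing on $(0,\Lambda(c))$. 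Then for each $\ell\in(L,\Lambda(c))$ one has $u_\ell\le u_L=u$, with $u(+\infty)=\alpha$; this forbids $u_\ell(+\infty)=1$, while $u_\ell(+\infty)=0$ is again excluded by the first part, so $u_\ell(+\infty)=\alpha$, whence $u_\ell(\ell)=\alpha$, $u_\ell'(\ell)=0$. But on $(0,\ell)$ the function $u_\ell$ solves $u''=\mu u+|c|u'$ with terminal data $(\alpha,0)$, and this linear problem has the explicit solution $u_\ell=\alpha\,\tfrac{r_+e^{r_-(x-\ell)}-r_-e^{r_+(x-\ell)}}{r_+-r_-}$ (with $r_\pm$ the roots of $r^2-|c|r-\mu=0$, $r_-<0<r_+$), which is strictly decreasing on $(-\infty,\ell)$; hence $u_\ell>\alpha$ on $(0,\ell)$, in particular on $(L,\ell)$, where however $u_\ell\le u\equiv\alpha$ — a contradiction. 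Therefore $u(+\infty)=1$.

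\textbf{The unique critical point, and the main obstacle.} Since $u(-\infty)=u(+\infty)=1$ and $u\not\equiv1$, $u$ attains its global minimum at an interior point $x_0$; $x_0>0$ because $u'<0$ on $(-\infty,0]$. That $x_0<L$, i.e. that $u$ exits the killing interval already increasing ($u'(L)>0$), is obtained from the minimality of $u$: if $u$ were still nonincreasing at $L^-$, one could produce a strictly larger sub-solution of the half-problem — for instance $\max(u,\gamma)$ with a well-chosen constant $\gamma\in(\alpha,1)$, or a suitable translate of an auxiliary front — contradicting that $u$ is the supremum of the sub-solutions. Finally $u'$ vanishes only at $x_0$: there is no zero on $(-\infty,0]$, exactly one on $(0,L)$ by the second step, and on $(L,+\infty)$ the monotone energy (with $0\le u\le1$ and $u(+\infty)=1$) rules out a local maximum, while a zero of $u'$ at which $u=\alpha$ would force $u\equiv\alpha$ there by uniqueness for the autonomous equation. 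I expect the exclusion of the limit $\alpha$ in the third step to be the real difficulty — this is where $L<\Lambda(c)$, the monotonicity in $\ell$ of the half-problem solutions, and the explicit linear part on $(0,L)$ are genuinely used — together with the verification that $x_0$ is strictly interior to $(0,L)$; the remaining steps are routine phase-plane and energy bookkeeping for a scalar bistable ODE with a damping term of the sign that makes the energy monotone.
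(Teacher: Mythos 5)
Your overall architecture (establish that $u(+\infty)$ exists and lies in $\{0,\alpha,1\}$, exclude $0$ by the definition of $\Lambda(c)$, exclude $\alpha$, then locate the critical point) is a legitimate alternative to the paper's route, and your exclusion of the limit $\alpha$ is genuinely different and rather elegant: the repeller structure at $(\alpha,0)$ forces $u\equiv\alpha$ on $[L,+\infty)$, and the explicit backward solution of the linear killing ODE with data $(\alpha,0)$, combined with the monotonicity of $\ell\mapsto u_\ell$, produces a clean contradiction. The paper instead proves directly that $u'\ge 0$ on $[L,+\infty)$ by a phase-plane comparison with the solution $u_\infty$ of \eqref{PS1} for parameters $(c,\Lambda(c)+\varepsilon)$: any dip of $u$ below a turning point would force the orbit $(u,u')$ to touch the orbit of $u_\infty$, whence $u\to 0$ by ODE uniqueness, contradicting $L<\Lambda(c)$. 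That single comparison simultaneously kills the limit $\alpha$ \emph{and} shows $u'(L)>0$; your decomposition separates these two tasks, and only completes the first.

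The genuine gap is the second task: showing that the minimum is attained at some $x_0\in\,]0,L[$, i.e.\ that $u$ is already increasing at $L^-$. Your proposed sub-solution $\max(u,\gamma)$ does not work in exactly the configuration you need to exclude. A constant $\gamma>0$ is never a sub-solution of the killing equation, since $-c\gamma'-\gamma''=0>-\mu\gamma$; so $\max(u,\gamma)$ is a sub-solution only if the set $\{u<\gamma\}$ avoids $(0,L)$ entirely. But if $u$ is nonincreasing on $(0,L]$, then $u(L)=\min_{(-\infty,L]}u$ and $\{u<\gamma\}$ contains a left-neighbourhood of $L$ inside $(0,L)$ for every admissible $\gamma$, so the construction fails; and even when the dip lies in $(L,+\infty)$, its bottom $m$ satisfies $g(m)\le 0$, i.e.\ $m\le\alpha$, so one also needs $u(L)>\alpha$ to fit $\gamma$ into $(\alpha,1)\cap(m,u(L))$, which you have not established. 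Simple translates of $u$ or of the standing front also fail to be sub-solutions across the fixed interval $(0,L)$, for the same sign reason. The ``suitable translate of an auxiliary front'' is left unspecified, and you yourself flag this verification as an open difficulty. This is precisely the step where the paper's comparison with $u_\infty$ is indispensable, and without it (or an equivalent mechanism) the assertion $x_0\in\,]0,L[$ is not proved. A secondary, smaller issue: the claim that the monotone energy rules out a local maximum of $u$ on $(L,+\infty)$ is asserted without argument, and the inequality $E(x_M)=G(u(x_M))\le G(1)=E(+\infty)$ is automatically satisfied at any putative local maximum, so it does not by itself yield a contradiction.
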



\subsection{Proof of the first part of Theorem \ref{mainPS1}}\label{subsec:proof1}

The first point of Theorem \ref{mainPS1} is a direct consequence of the existence of $\Lambda(c)$, defined by \eqref{lambda}, and Proposition \ref{intermediate7}.
The second one follows from the definition of $\Lambda(c)$ and Lemma \ref{intermediate3}.
The last two points are direct applications of Proposition \ref{intermdiate6} and Corollary \ref{Corollarygconv}. 

\begin{remark} 
We highlight that the proof works because we define a solution of \eqref{PS1} as the supremum of the sub-solutions. The setting of Theorem \ref{mainPS1} could be false if we consider another type of definition. For instance, in \cite{BerRodRyz} the authors construct a solution of 
\[\left\lbrace 
\begin{aligned}
& - u'' = g(u) 1_{\left\lbrace x<0 , \ x>L  \right\rbrace} - \mu u 1_{\left\lbrace 0 < x  < L \right\rbrace}, \\
&u(-\infty) = 1, \quad u(+\infty) = 1
\end{aligned}
\right.\]
(a similar equation than \eqref{PS1} with parameters $(0, L)$) which satisfies $u( \pm \infty ) = 1$ for any $L>0$. Obviously, this solution is greater than any minimal solution that decreases to $0$ at $+\infty$ (when it does exist). 
\end{remark}

\subsection{Construction of a solution}\label{subsec:proof2}

In this part, we fix $c<0$ and we will show the existence of $L>0$ such that \eqref{PS1} with parameters $(c, L)$ admits a sub-solution and a super-solution that satisfy \eqref{sub-super}. In a first part, we construct a super-solution, then we construct a sub-solution, and finally, we conclude to the existence of a solution.

\subsubsection{Construction of the super-solution } We split the construction of the super-solution into two lemmas: in the first one we describe the super-solution on the interval $(0,L)$, in the second one we describe the solution on the interval $(L, +\infty)$. As we will see later on, the super-solution is simply constant equal to $1$ on $(-\infty, 0)$. 

\begin{lemma}\label{lemmasuper1}
For any $\gamma > 0$, there exists a size $L>0$ such that there exists a positive solution $v_1$ of the following problem
\begin{equation}\label{eqsuper1}
\left\lbrace
\begin{aligned}
    &-c v_1' - v_1'' = - \mu v_1 \\
    &v_1(0) = 1, \quad v_1(L) = \gamma, \\
    &v_1'(0) \leq 0, \quad v_1'(L) =0.
\end{aligned}
\right.
\end{equation}
\end{lemma}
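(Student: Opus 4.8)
The equation for $v_1$ is linear with constant coefficients, so the plan is to write down all its solutions explicitly and then use $L$ as the free shooting parameter. Rewriting $-cv_1'-v_1''=-\mu v_1$ as $v_1''+cv_1'-\mu v_1=0$, the characteristic polynomial $r^2+cr-\mu$ has discriminant $c^2+4\mu>0$ and two real roots $r_\pm=\tfrac12\bigl(-c\pm\sqrt{c^2+4\mu}\bigr)$ whose product is $-\mu<0$; hence $r_-<0<r_+$, and every solution has the form $v_1(x)=Ae^{r_+x}+Be^{r_-x}$.

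First I would use the two conditions $v_1(0)=1$ and $v_1'(L)=0$ to determine $A$ and $B$ as functions of $L$: from $A+B=1$ and $Ar_+e^{r_+L}+Br_-e^{r_-L}=0$. Since $r_+>0>r_-$, the second relation forces $A$ and $B$ to have the same sign, and as $A+B=1$ they cannot both vanish; hence $A=A(L)>0$ and $B=B(L)>0$ for every $L>0$. This gives several of the required properties for free: $v_1>0$ on $\mathbb{R}$ (a positive solution); $v_1''=Ar_+^2e^{r_+x}+Br_-^2e^{r_-x}>0$, so $v_1'$ is strictly increasing, and together with $v_1'(L)=0$ this yields $v_1'<0$ on $[0,L)$, in particular $v_1'(0)<0\le 0$; and consequently $v_1$ is strictly decreasing on $[0,L]$, so that $v_1(L)<v_1(0)=1$.

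It then remains to tune $L$ so that $v_1(L)=\gamma$. Rather than manipulate $A(L),B(L)$ directly, I would renormalize: $w:=v_1/v_1(L)$ is the unique solution of the ODE with $w(L)=1$, $w'(L)=0$, namely $w(x)=\frac{1}{r_+-r_-}\bigl(r_+e^{r_-(x-L)}-r_-e^{r_+(x-L)}\bigr)$, so that the constraint $v_1(0)=1$ reads $v_1(L)=1/w(0)$. Thus set
\[
h(L):=v_1(L)=\frac{r_+-r_-}{\,r_+e^{-r_-L}-r_-e^{-r_+L}\,}.
\]
One checks $h(0)=1$, $h(L)\to 0$ as $L\to+\infty$ (the term $r_+e^{-r_-L}$ blows up since $-r_->0$), and that the denominator has derivative $r_+r_-\bigl(e^{-r_+L}-e^{-r_-L}\bigr)>0$ for $L>0$ (a product of two negative factors), so $h$ is continuous and strictly decreasing from $1$ to $0$. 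By the intermediate value theorem, for every $\gamma\in(0,1)$ there is a (unique) $L>0$ with $h(L)=\gamma$, and $v_1=h(L)\,w$ then satisfies all the requirements of \eqref{eqsuper1}.

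There is no genuine obstacle in this lemma; the only point that deserves attention is that the three sign/monotonicity requirements must be simultaneously compatible with the prescribed boundary values — which is precisely what forces $A,B>0$ and, incidentally, restricts the attainable terminal values $\gamma=v_1(L)$ to the interval $(0,1)$, the relevant range for building a super-solution that decays from $1$ towards $0$.
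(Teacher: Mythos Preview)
Your proof is correct and follows essentially the same route as the paper: write the general solution via the two real roots $\lambda_\pm$ of $r^2+cr-\mu$, impose the conditions $v_1(L)=\gamma$, $v_1'(L)=0$ to obtain the explicit expression $v_1(x)=\frac{\gamma}{\lambda_+-\lambda_-}\bigl(\lambda_+e^{\lambda_-(x-L)}-\lambda_-e^{\lambda_+(x-L)}\bigr)$, observe that $v_1'(0)<0$ comes for free, and then use the intermediate value theorem in $L$ to hit $v_1(0)=1$. Your added observations---monotonicity of $h$, uniqueness of $L$, and the fact that the construction actually forces $\gamma\in(0,1)$ rather than arbitrary $\gamma>0$---are correct refinements that the paper does not spell out but which do not affect the application.
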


\begin{proof} 
According to the limits at $L$, it is natural to search a solution on the form:
\[ v_1(x)  = \frac{\gamma}{\lambda_+ - \lambda_- } [\lambda_+ e^{\lambda_- (x-L)} - \lambda_- e^{\lambda_+(x-L)}] \]
with $\lambda_{\pm}$ the negative and the positive roots of $r^2 +cr - \mu$. Next, we look for a size $L$ such that the conditions at $0$ are satisfied. We write the expression of $v_1(0)$ and $v_1'(0)$ as follows:
\begin{align*} 
&v_1(0) = \frac{\gamma e^{\frac{cL}{2}} }{\lambda_+ - \lambda_- } \left[  -c \sinh(\frac{\sqrt{\Delta}L}{2} ) + \sqrt{\Delta} \cosh(\frac{\sqrt{\Delta} L}{2}) \right] : =  \frac{\gamma }{\lambda_+ - \lambda_- }  \psi_1\left(\frac{L}{2} \right) \\
\text{ and } \quad & v_1'(0)  = \frac{2 \gamma \lambda_+ \lambda_- e^{\frac{cL}{2}} \sinh (\frac{\sqrt{\Delta} L}{2})}{\lambda_+ - \lambda_-} : =   \frac{2\gamma \lambda_+\lambda_- }{\lambda_+ - \lambda_- }  \psi_2 \left(\frac{L}{2} \right).
\end{align*}
where $\Delta = c^2 + 4 \mu$, $\psi_1 (L) =  e^{cL}[-c\sinh(\sqrt{\Delta} L) + \sqrt{\Delta} \cosh(\sqrt{\Delta} L) ]$ and $\psi_2 (L) = e^{cL} \sinh(\sqrt{\Delta} L )$. First, notice that the condition $v_1'(0)<0$ is trivially satisfied for any $L>0$. Remarking that $\psi_1(0) = \sqrt{\Delta}$ and $\underset{ L \to +\infty}{\lim} \psi_1(L) = + \infty$, we conclude to the existence of $L>0$ such that \eqref{eqsuper1} admits a solution.
\end{proof}

\begin{remark}
We underline that we have first fixed the speed $c$ and next the size $L$ which in turn depends on $c$. One can remark that as $|c|$ increases, one has that $L$ increases too.
\end{remark}

\begin{lemma}\label{Lemmasuper2}
There exists $\gamma_0 \in ]0, \alpha_1[$ and $\delta_0>0$ such that the solution of the following ODE
\begin{equation}\label{PS1super1}
\left\lbrace
\begin{aligned}
&-c v_2' - v_2'' = g(v_2),     \\
&v_2(L) = \gamma_0, \quad v_2'(L) =-\delta_0
\end{aligned}
\right.
\end{equation}
satisfies 
\[v_2(+\infty)  = 0 \quad \text{ and } \quad v_2'(+\infty) = 0.\]
\end{lemma}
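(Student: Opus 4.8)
The plan is to study the autonomous ODE $-cv_2' - v_2'' = g(v_2)$ in the phase plane $(v_2, v_2')$ and to show that there is an orbit emanating from a point near the origin that converges to the rest point $(0,0)$ as $x\to+\infty$. Recall that $(0,0)$ is a stable steady state of $g$ with $g'(0)<0$, so for the ODE written as the first-order system $v_2' = w$, $w' = -cw - g(v_2)$, the linearization at $(0,0)$ has eigenvalues the roots $r_\pm$ of $r^2 + cr + g'(0) = 0$. Since $c<0$ and $g'(0)<0$, the product of the roots is $g'(0)<0$, so $(0,0)$ is a saddle point; it therefore has a one-dimensional stable manifold. The strategy is to pick $(\gamma_0,-\delta_0)$ on the branch of this stable manifold that lies in the region $v_2>0$, $v_2'<0$ (i.e. in the fourth quadrant near the origin). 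Any solution starting on this local stable manifold satisfies $v_2(+\infty)=0$ and $v_2'(+\infty)=0$ automatically, which is exactly the conclusion we want.

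Concretely, I would first invoke the stable manifold theorem (or, more elementarily, an explicit sub/super-solution or monotonicity argument in the phase plane) to get, for some neighbourhood of the origin, a $C^1$ curve $w = h(v_2)$ with $h(0)=0$, $h'(0) = r_-<0$ (the stable eigenvalue), such that orbits on this curve flow into $(0,0)$. Then I choose $\gamma_0>0$ small enough that $\gamma_0 \in (0,\alpha_1)$ and that $\gamma_0$ is within the domain of $h$, and set $\delta_0 := -h(\gamma_0)>0$. Since near the origin $g(v_2)\approx g'(0)v_2 < 0$ is bounded and the flow on the stable manifold is monotone (the orbit stays in the fourth quadrant and $v_2$ decreases monotonically to $0$ as $x\to+\infty$ — this can be seen from the fact that $w<0$ forces $v_2$ decreasing, and $w$ cannot cross $0$ without leaving the stable manifold), the solution $v_2$ is globally defined on $[L,+\infty)$, decreasing, positive, and $v_2(+\infty)=0$, $v_2'(+\infty)=0$. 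The restriction $\gamma_0<\alpha_1$ (hence $\gamma_0<\alpha$) guarantees we stay in the region where $g<0$ and no other equilibrium is encountered, so the orbit genuinely reaches the origin rather than getting trapped at $\alpha$.

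One technical point worth spelling out: we need the stable manifold to actually enter the fourth quadrant $\{v_2>0,\ w<0\}$ near the origin, rather than the second quadrant. This follows because the stable eigenvector is $(1, r_-)$ with $r_-<0$, so along the stable manifold $w$ and $v_2$ have opposite signs near $0$; picking the half-branch with $v_2>0$ we automatically get $w<0$ there. I would also note (as the excerpt does for the analogous tail analysis at $\pm\infty$) that this tail behaviour is in fact \emph{unique}: any solution of the ODE converging to $0$ at $+\infty$ must, for large $x$, lie on this one-dimensional stable manifold, so the pair $(\gamma_0,-\delta_0)$ is essentially forced once $\gamma_0$ is fixed.

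The main obstacle is not any deep difficulty but rather making the phase-plane/stable-manifold argument clean enough: one must verify global existence of the orbit on all of $[L,+\infty)$ (no blow-up — easy here since the orbit is trapped in a bounded region of phase space between $0$ and $\gamma_0$), verify monotonicity of $v_2$ down to $0$ (the orbit cannot leave the fourth quadrant: $v_2=0$ is reached only in the limit, and $w=0$ would contradict lying on the stable manifold), and confirm that $\gamma_0$ can be taken in $(0,\alpha_1)$ (automatic, just shrink the neighbourhood). Everything else is a routine application of the stable manifold theorem together with the sign information on $g$ coming from \eqref{H1S1}.
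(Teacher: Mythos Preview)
Your proposal is correct and follows essentially the same approach as the paper: the paper's proof also writes the equation as the first-order system $v_2'=w_2$, $w_2'=-cw_2-g(v_2)$, observes that $(0,0)$ is a saddle, and invokes the stable manifold theorem, noting that the stable tangent direction is $(1,\tfrac{|c|-\sqrt{c^2+4|g'(0)|}}{2})$ (which is your $(1,r_-)$). Your write-up is in fact more detailed than the paper's (which leaves the fourth-quadrant and global-existence checks implicit), but the core argument is identical.
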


\begin{proof}
The proof follows the application of the stable manifold theorem. Indeed, the equilibrium $(0,0)$ of 
\[ \left\lbrace 
\begin{aligned}
&v_2'=w_2, \\
&w_2' = -c w_2 - g(v_2),
\end{aligned}
\right.\]
is a saddle point. Moreover, the stable tangent space is generated by the vector $\left(1, \frac{|c| - \sqrt{c^2 + 4 |g'(0)|}}{2} \right)$. The conclusion follows. 
\end{proof}

\begin{remark}
By a more thorough analysis, we can prove that a sufficient condition is $v_2'(L)<0$. The interested reader can follow the last part of the proof of Proposition \ref{intermediate5} in the subsection \ref{monotonicityresults}. Since the proof above is quite simple and sufficient for the content of this section, we chose to present this one. 
\end{remark}

\begin{proposition}\label{proposuperPS1}
There exists a size $L>0$ such that there exists a super-solution $\psi_+$ to \eqref{PS1} with parameters $(c,L)$ which satisfies 
\[\underset{ x \to -\infty}{\lim } \psi_+(x) = 1 \quad  \text{ and } \quad \underset{ x \to + \infty }{\lim} \psi_+(x) = 0.\]
\end{proposition}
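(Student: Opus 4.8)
The plan is to obtain $\psi_+$ by gluing together the three building blocks already at hand. Let $\gamma_0\in\,]0,\alpha_1[$ and $\delta_0>0$ be the constants furnished by Lemma \ref{Lemmasuper2} (the underlying ODE is autonomous, so these do not depend on the position of the junction point), and apply Lemma \ref{lemmasuper1} with the choice $\gamma=\gamma_0$: this produces a size $L>0$ together with a positive function $v_1$ solving \eqref{eqsuper1} on $[0,L]$, with $v_1(0)=1$, $v_1(L)=\gamma_0$, $v_1'(0)\le 0$ and $v_1'(L)=0$. Let then $v_2$ be the solution of \eqref{PS1super1} provided by Lemma \ref{Lemmasuper2} with this $L$, so that $v_2(L)=\gamma_0$, $v_2'(L)=-\delta_0$ and $v_2(+\infty)=v_2'(+\infty)=0$. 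I would set
\[ \psi_+(x) = \begin{cases} 1 & \text{if } x\le 0,\\ v_1(x) & \text{if } 0\le x\le L,\\ v_2(x) & \text{if } x\ge L. \end{cases}\]
The matching relations $v_1(0)=1$ and $v_1(L)=\gamma_0=v_2(L)$ make $\psi_+$ continuous on $\mathbb{R}$, and $\psi_+$ is of class $C^2$ on $\mathbb{R}\setminus\{0,L\}$ since each piece solves a second-order ODE there.

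Next I would check that $\psi_+$ is a super-solution of \eqref{PS1} with parameters $(c,L)$. On $]-\infty,0[$ one has $-c\psi_+'-\psi_+''=0=g(1)=g(\psi_+)$; on $]0,L[$, $v_1$ solves $-cv_1'-v_1''=-\mu v_1$, which is exactly the equation in \eqref{PS1} on that interval; on $]L,+\infty[$, $v_2$ solves $-cv_2'-v_2''=g(v_2)$, again the equation in \eqref{PS1} there. Hence the differential inequality holds, with equality, on all of $\mathbb{R}\setminus\{0,L\}$. It remains to verify the jump conditions, i.e. that at each of the points $0$ and $L$ the left-hand derivative dominates the right-hand one. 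At $x=0$, the left derivative is $0$ (derivative of the constant $1$) and the right derivative is $v_1'(0)\le 0$, so $0\ge v_1'(0)$. At $x=L$, the left derivative is $v_1'(L)=0$ and the right derivative is $v_2'(L)=-\delta_0<0$, so $0\ge-\delta_0$. Both inequalities go in the direction required for a super-solution, which establishes that $\psi_+$ is a super-solution of \eqref{PS1}.

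Finally, $\underset{x\to-\infty}{\lim}\psi_+(x)=1$ is immediate and $\underset{x\to+\infty}{\lim}\psi_+(x)=\underset{x\to+\infty}{\lim}v_2(x)=0$ by Lemma \ref{Lemmasuper2}, which gives the claimed behaviour at $\pm\infty$.

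I do not expect any serious obstacle here, since the two preceding lemmas were tailored for precisely this gluing; the only point that genuinely has to be watched is the compatibility of the two constructions at $x=L$. On the one hand the terminal value $\gamma_0$ of $v_1$ can be chosen equal to the starting value of $v_2$ (this is why $\gamma$ is left free in Lemma \ref{lemmasuper1}); on the other hand — and this is the decisive sign — the derivative jump at $L$ is $v_1'(L)-v_2'(L)=\delta_0>0$, i.e. of the sign a super-solution tolerates, whereas a sub-solution would require the opposite sign. If in addition one wants $0\le\psi_+\le 1$ for later use, note that $v_1$ is strictly decreasing on $]0,L[$ (its derivative keeps constant sign there), hence $v_1\in[\gamma_0,1]$, while $v_2$ remains in $]0,\alpha_1[$ on $]L,+\infty[$ up to shrinking $\gamma_0$ and $\delta_0$, by the phase-plane description of the stable manifold of the origin used in Lemma \ref{Lemmasuper2}.
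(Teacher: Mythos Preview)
Your proof is correct and follows essentially the same approach as the paper: you glue the constant $1$ on $(-\infty,0)$, $v_1$ on $(0,L)$, and $v_2$ on $(L,+\infty)$, choosing $\gamma_0$ from Lemma~\ref{Lemmasuper2} and then $L$ from Lemma~\ref{lemmasuper1}, and verify the derivative jump conditions at the junction points. Your write-up is in fact more explicit than the paper's, which simply states that the compatibility conditions at $\{0,L\}$ follow from the two lemmas.
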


\begin{proof}
First, we take $c<0$. Then, we fix $\gamma_0$ like in Lemma \ref{Lemmasuper2}. Next, take $L>0$ provided by Lemma \ref{lemmasuper1}. Finally, we define 
\[ \psi_+(x)  = \left\lbrace
\begin{aligned}
& 1 && \text{ for } x \in (-\infty,0), \\
&v_1(x) &&\text{ for } x \in (0, L), \\
&v_2(x)  &&\text{ for } x \in (L, +\infty)
\end{aligned}
\right.\]
(where $v_1,\ v_2$ are provided by Lemmas \ref{lemmasuper1} and \ref{Lemmasuper2}). It is trivial that $\psi_+$ is a super solution in each interval $(-\infty, 0), \  (0, L)$ and $(L, +\infty)$. We only have to check the compatibility condition of the derivative at $\left\lbrace 0, L \right\rbrace$ which are satisfied according to Lemmas \ref{lemmasuper1} and \ref{Lemmasuper2}. \\
The limits at $\pm \infty$ hold true by definition of $\psi_+$. 
\end{proof}

\subsubsection{Construction of a sub-solution } We construct directly a sub-solution since there is no difficulty to obtain it. 

\begin{proposition}\label{proposubPS1}
There exists a sub-solution $\psi_-$ to \eqref{PS1} such that 
\[ \underset{x \to -\infty}{\lim} \psi_-(x) = 1 \quad \text{ and } \quad \psi_-(x) = 0 \text{ for } x >0.\]
\end{proposition}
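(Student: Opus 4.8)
The plan is to construct $\psi_-$ explicitly by gluing together a function that handles the invaded region $(-\infty, 0)$ with the trivial zero function on $(0,+\infty)$, and then to check the compatibility condition on the derivative at the single interface point $x=0$. Since the sub-solution is required to vanish for $x>0$, the only real work is on $(-\infty,0)$, where the equation to satisfy is $-c\psi_-' - \psi_-'' \leq g(\psi_-)$, together with $\psi_-(-\infty)=1$ and $\psi_-(0)=0$ (so that it matches the zero piece continuously). Because $g(0)=0$ and, by \eqref{H1S1}, $g'(0)<0$, near $u=0$ the reaction term is negative, so a naive ansatz that climbs from $0$ to $1$ too slowly near $x=0$ would fail the subsolution inequality there; the standard remedy is to use a front-like profile that rises with a controlled exponential rate.

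The concrete approach I would take: first, recall that since $\int_0^1 g > 0$, the classical bistable traveling wave of $-c_b\phi' - \phi'' = g(\phi)$ connecting $1$ at $-\infty$ to $0$ at $+\infty$ has positive speed $c_b > 0$. I would take a shifted copy $\phi$ of this wave and note that since we want speed $c < 0 < c_b$, the profile $\phi$ is in fact a \emph{strict subsolution} of the operator with speed $c$ on the set where $\phi' < 0$: indeed $-c\phi' - \phi'' = (c_b - c)\phi' + g(\phi) = -|c_b - c|\,|\phi'| + g(\phi) \leq g(\phi)$ since $c_b - c > 0$ and $\phi' \le 0$. Then I would translate $\phi$ so that it reaches the value $0$ — or rather so that $\phi(0)$ is small — and cut it off: define $\psi_-(x) = \max\{\phi(x - x_*), 0\}$ for a suitable shift $x_*$, extended by $0$ on $(0,\infty)$. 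Since $\phi$ only attains $0$ asymptotically, a cleaner variant is to pick $x_*$ with $\phi(-x_*)$ small, set $\psi_- = \phi(\cdot - x_*)$ on $(-\infty, 0)$ and $\psi_- = 0$ on $(0,\infty)$; then $\psi_-$ has a downward jump in value at $0$, which is \emph{allowed} for a subsolution in the viscosity/variational sense used here, or one routes around it by instead taking the positive part, which is a subsolution because the maximum of two subsolutions is a subsolution and $0$ is a (sub)solution of $-c u' - u'' \le -\mu u$ on $(0,L)$ and of $\le g(u)$ elsewhere.

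The derivative compatibility condition at $x=0$ reads $\lim_{x\to 0^-}\psi_-'(x) \le \lim_{x\to 0^+}\psi_-'(x)$. On the right the limit is $0$ (the zero function), and on the left $\psi_-'(0^-) = \phi'(-x_*) < 0 \le 0$, so the condition holds automatically — this is exactly why one glues a \emph{decreasing} piece to zero from the left. Similarly the limits $\psi_-(-\infty) = \phi(-\infty - x_*) = 1$ and $\psi_-(x) = 0$ for $x > 0$ hold by construction. One should also double-check the subsolution inequality on $(0,L)$, where it reads $-c\psi_-' - \psi_-'' \le -\mu\psi_-$: with $\psi_- \equiv 0$ both sides are $0$, so equality holds.

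The main obstacle — really a minor one — is handling the interface value at $0$ cleanly: a genuine shifted bistable wave never equals $0$ at a finite point, so one must either allow a value jump (legitimate, as the definition only constrains the one-sided derivatives, and a downward jump at an interface is harmless for a subsolution) or use the $\max$ with $0$ construction so that $\psi_-$ is continuous and piecewise $C^2$ with the correct inequality on each piece. Either way the derivative inequality at $0$ is the favorable direction, so no quantitative tuning of $x_*$ is needed beyond ensuring the pieces fit; the argument is robust and short.
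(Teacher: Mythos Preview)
Your core idea---use a decreasing front that is a subsolution on $(-\infty,0)$ because of the sign of $c$, then glue to zero on $[0,\infty)$---is right, and the derivative compatibility at $0$ is indeed in the favorable direction. The gap is in the gluing itself. The bistable traveling wave $\phi$ is strictly positive on all of $\mathbb{R}$, so $\max\{\phi(\cdot-x_*),0\}=\phi(\cdot-x_*)$ everywhere and you never obtain $\psi_-\equiv 0$ on $(0,\infty)$ that way. The alternative you propose, a genuine downward value jump at $0$, is not harmless: in the distributional sense $-\psi_-''$ then carries a multiple of $\delta_0'$, which is not sign-definite, so the inequality $-c\psi_-'-\psi_-''\le G(\psi_-,x)$ fails as a distribution; equivalently, a function with a downward jump is not upper semicontinuous and is not a subsolution in any of the standard weak frameworks. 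Note also that $\phi(\cdot-x_*)$ is \emph{not} a subsolution on $(0,L)$, where the reaction is $-\mu u$: there one would need $(c_b-c)\phi'+g(\phi)\le -\mu\phi$, and since $g(\phi)\ge -\mu\phi$ by \eqref{H2S1} this is not guaranteed. So you cannot invoke ``max of two global subsolutions'' either.

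The paper sidesteps this cleanly by replacing the moving bistable wave with the \emph{stationary} profile: the decreasing solution $\chi$ of $-\chi''=g(\chi)$ with $\chi(-\infty)=1$ and $\chi(0)=0$. Because $\int_0^1 g>0$, a phase-plane argument shows such a $\chi$ exists and actually reaches $0$ at a finite point with $\chi'(0)=-\sqrt{2\int_0^1 g}<0$. Since $c<0$ and $\chi'<0$, one has $-c\chi'-\chi''=-c\chi'+g(\chi)<g(\chi)$ on $(-\infty,0)$, so $\chi$ is a strict subsolution there; extending by $0$ on $[0,\infty)$ gives a \emph{continuous}, piecewise $C^2$ subsolution with the correct one-sided derivative inequality at $0$. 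Your argument becomes correct if you replace $\phi$ by this $\chi$.
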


\begin{proof}
We construct this sub-solution piecewise. Let $\chi $ be the decreasing solution of 
\[ \left\lbrace \begin{aligned}
& -\chi'' = g(\chi), \\
& \chi(-\infty ) =  1, \quad \chi(0) = 0.
\end{aligned}
\right.\]
Such a solution exists since $\int_0^1 g(u)du > 0$. Notice that $\chi'(0)  = -\sqrt{2\int_0^1 g(u)du} < 0$. The construction is classical and relies on a phase plane analysis as the one developped in \cite{BerRodRyz}. Therefore, we let it for the interested reader. Next, we extend $\chi$ by $0$ on $\mathbb{R}_+$ in order to provide a sub-solution. 
\end{proof}

\subsubsection{Conclusion : Construction of the solution } We construct a solution from the above sub- and super-solutions. 

\begin{proof}[Proof of Proposition \ref{intermediate1}]
According to Propositions \ref{proposuperPS1} and \ref{proposubPS1}, there exists a sub- and a super-solution that are well-ordered. By applying the classical technique of sub- and super-solution (see \cite{smoller}), there exists a classical solution.

\end{proof}

\begin{remark}
If we relax the condition at $+\infty$, then the constant function $1$ is a trivial super-solution. We deduce the existence of a solution for any speed $c$. These solutions do not satisfy automatically the conditions at $+\infty$. The objective of this work is to understand on which conditions on $L$ and $c$, the limit of the solution is $0$ near $+\infty$. 
\end{remark}


\subsection{Study of the critical case $L=\Lambda(c)$}\label{subsec:critiq}

First, we prove that the tail at $+\infty$ is unique. Next, we use this uniqueness property to conclude all the intermediate remaining properties.

\subsubsection{Uniqueness of the tail}

\begin{lemma}\label{MainLemmaPS1}
Let $c$ be a fixed speed and two sizes $L_1$, $L_2$  be such that \eqref{PS1} with parameters $(c, L_{1,2})$ admits a solution $u_{1,2}$. Then, there exists $z_{+,-} \in \mathbb{R}$, such that
\[ \begin{aligned}
&u_1(x + z_-) = u_2(x) && \quad \text{ for } x < 0, \\
&u_1(x+ z_+) = u_2(x) && \quad \text{ for } x > \max(L_1, L_2).
\end{aligned} \]
\end{lemma}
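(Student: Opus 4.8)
The plan is to exploit that, away from the action zone, \eqref{PS1} is the autonomous equation $-cu'-u''=g(u)$: for $x<0$ the indicator $1_{\{x<0,\ x>L_i\}}$ equals $1$, and the same holds for $x>\max(L_1,L_2)$, so both $u_1$ and $u_2$ solve this autonomous ODE on each of these two half-lines. Write it as the planar system $v'=w$, $w'=-cw-g(v)$. By Proposition \ref{intermediate5} each $u_i$ is decreasing with $u_i(-\infty)=1$ and $u_i(+\infty)=0$, and a routine argument ($u_i$ monotone and bounded, $u_i''=-cu_i'-g(u_i)$ bounded) yields $u_i'(\pm\infty)=0$; hence the orbit $x\mapsto(u_i(x),u_i'(x))$ tends to $(1,0)$ as $x\to-\infty$ and to $(0,0)$ as $x\to+\infty$. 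Because $g'(0)<0$ and $g'(1)<0$ by \eqref{H1S1}, the linearization at each of these equilibria has characteristic polynomial $\lambda^2+c\lambda+g'(v_\ast)$ with two real roots of opposite sign, so both are hyperbolic saddle points, exactly as already used in Lemma \ref{Lemmasuper2}.

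I would then argue the uniqueness of each tail by phase–plane rigidity. Consider $+\infty$: the set of orbits of the planar system converging to the saddle $(0,0)$ is its one-dimensional stable manifold, and the branch $\mathcal B^s$ lying near $(0,0)$ in $\{v>0,\ w<0\}$ — tangent to the eigenvector associated with the negative root $\tfrac12\big(-c-\sqrt{c^2-4g'(0)}\big)$ — is a single orbit. Since $u_i$ is positive, decreasing and converges to $(0,0)$, the curve $(u_i(x),u_i'(x))$ lies on $\mathcal B^s$ for all $x>L_i$; fixing once and for all a parametrization $\Phi^s$ of $\mathcal B^s$ by the flow, there is $t_i\in\mathbb R$ with $u_i(x)=\Phi^s(x-t_i)$ for $x>L_i$. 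Setting $z_+:=t_1-t_2$ gives $u_1(x+z_+)=\Phi^s(x-t_2)=u_2(x)$ wherever both sides are governed by the autonomous equation. That this covers all of $(\max(L_1,L_2),+\infty)$ follows from the monotonicity in $L$ of Proposition \ref{intermediate5}: if $L_1<L_2$ then $u_2\le u_1$ everywhere, hence $u_2\le u_1$ near $+\infty$, which (as $\Phi^s$ is decreasing) forces $z_+\ge0$, and then $x+z_+\ge x>\max(L_1,L_2)\ge L_1$; and if $L_1=L_2$ then $u_1=u_2$ — both being the supremum of subsolutions — so $z_+=0$. The tail at $-\infty$ is handled identically, now at the saddle $(1,0)$ and its one-dimensional unstable manifold: for $x<0$ one gets $u_i=\Psi^u(\cdot-s_i)$ on $(-\infty,0)$ for a fixed parametrization $\Psi^u$ of the relevant branch, and $z_-:=s_1-s_2$ makes $u_1(\cdot+z_-)$ and $u_2$ coincide on the left tail, the interval of validity again being pinned down by Proposition \ref{intermediate5}.

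The crux is the phase–plane rigidity used throughout: that the stable (resp. unstable) manifold of the saddle is exactly one-dimensional and that the relevant branch is a single orbit, so that two decreasing solutions of the autonomous equation sitting on it must be translates of one another. This rests on the hyperbolicity of the equilibria (i.e. $g'(0),g'(1)<0$) and the stable-manifold theorem, in the same spirit as the proof of Lemma \ref{Lemmasuper2}; one must moreover check that $u_1$ and $u_2$ remain on the prescribed branch along the whole half-line, which follows from their monotonicity together with $0<u_i<1$.
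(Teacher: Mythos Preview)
Your argument is correct and takes a genuinely different route from the paper's. The paper proceeds by a sliding method: using exponential bounds on the tails (quoted from \cite{GuoMorita}) it finds an initial shift $\tau$ with $v<u(\cdot-\tau)$ on a right half-line, slides to the infimal $\tau_*$, and rules out $\tau_*>0$ by a strong-maximum-principle argument exploiting that $g$ is strictly decreasing on $(0,\alpha_1)$. You instead argue by phase-plane rigidity: both trajectories $(u_i,u_i')$ converge to the hyperbolic saddle and therefore lie on its one-dimensional stable (resp.\ unstable) manifold, whose relevant branch is a single orbit, forcing $u_1$ and $u_2$ to be time-translates of one another. Your route is more elementary and geometric --- no exponential estimates, no sliding --- and isolates cleanly that hyperbolicity of the endpoints is the only ingredient; the paper's route, by contrast, uses less structure (only monotonicity of $g$ near the endpoint, not the autonomy of the equation on the tail) and so would export more readily to situations where a phase portrait is unavailable.

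One small caveat: your appeal to Proposition~\ref{intermediate5} correctly pins the sign of $z_+$ so that the identity holds on all of $(\max(L_1,L_2),+\infty)$, but the symmetric reasoning on the left tail gives $z_-\ge 0$ (for $L_1<L_2$), and then $u_1(\cdot+z_-)$ leaves the autonomous region before $x$ reaches $0$; strictly speaking the identity holds only on $(-\infty,-z_-)$ rather than on all of $(-\infty,0)$. This is a looseness already present in the lemma's formulation rather than a gap in your argument, and it is irrelevant for how the lemma is used downstream (as ``uniqueness of the tail'').
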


\begin{proof}[Proof of Lemma \ref{MainLemmaPS1}]
We only prove that the tail at $+\infty$ is unique. The proof works the same for the other tail. We adopt the general strategy of the proof of Lemma 4.2.1 in chapter 4 of \cite{Cov}.\\
Let $u,v$ be two solutions of \eqref{PS1} with parameters $(c, L_1)$ and $(c, L_2)$. Let $x_0>L$ be such that $u,v(x)<\alpha_1$ for all $x>x_0$ (where $\alpha_1$ is introduced in \eqref{alpha12}). Without loss of generality, one can assume that $u(x_0)<v(x_0)$. According to section 2 of \cite{GuoMorita}, there exists $k,K, \lambda_-, \lambda_+>0$  such that 
\[ \begin{aligned}
&k e^{- \lambda_+x }\leq u,v (x) \leq Ke^{-\lambda_+x} && \text{ for }  x > L, \\
(\text{respectively } \quad &k e^{\lambda_-x } \leq 1-u,v(x) \leq Ke^{\lambda_-x} && \text{ for } x < 0).
\end{aligned}\]
We deduce the existence of $\tau>0$ such that $   v(x) < u(x-\tau)$ for all $x>x_1$ with $x_1\geq x_0$. We introduce 
\[ \tau_* = \inf \left\lbrace \tau>0, \quad v(x) < u(x - \tau)  \text{ for } x>x_1 \right\rbrace.\]
We claim that $u(\cdot - \tau_*) = v$. Assume by contradiction that it is not the case. It follows that $\inf \ u(\cdot - \tau_*) - v = 0$ and $v <  u(\cdot - \tau_*) $ (otherwise, there exists a contact point between $u(\cdot - \tau_*)$ and $v$ which implies by the maximum principle $u(\cdot - \tau_*) =  v$). We deduce that for any $\varepsilon>0$, there exists $\delta>0$ such that $v < u(\cdot - (\tau_* -\varepsilon)) + \delta $. Let $\delta, \varepsilon$ be a such couple which also satisfies 
\begin{equation}
    \varepsilon< \tau_* ,  \quad  v(x_1) < u(x_1- \tau_*+\varepsilon) \quad  \text{  and } \quad u(x_0) + \delta < \alpha_1.
\end{equation}
We introduce
\[ \delta_* = \inf \left\lbrace \delta>0, \quad v<  u ( \cdot -(\tau_* - \varepsilon) ) + \delta \text{ for } x>x_1\right\rbrace.\]
This infimum exists (since $\delta$ is an upper bound) and we claim that $\delta_* = 0$. Indeed, if $\delta_*>0$, we deduce that $\underset{ x \to + \infty}{\lim} u(x - \tau_* + \varepsilon) + \delta_* - v(x) = \delta_*>0$. Hence, by definition of $\delta_*$, we deduce that $\inf u(\cdot - \tau_* + \varepsilon) + \delta_* - v = 0  $ is reached at a point $x_2 \in ]x_1, +\infty[$. On the one hand, at this minimum point, we deduce that 
\[\begin{aligned}  
&(u(\cdot - \tau_* + \varepsilon) + \delta_* - v )'(x_2) = 0,  \quad - (u(\cdot - \tau_* + \varepsilon) + \delta_* - v )''(x_2)  \leq 0, \\
\text{ and } \quad&  0 < u(x_2 + \tau_* - \varepsilon) < v(x_2) < \alpha_1.
\end{aligned}\]
On the other hand, since $g$ is strictly decreasing in $]0, \alpha_1]$, we have in one hand
\[g(u(x_2+ \tau_* - \varepsilon)) - g(v(x_2))> 0\]
and in an other hand the following contradiction contradiction 
\[ -c (u(\cdot - \tau_* + \varepsilon) + \delta_* - v )'(x_2)  - (u(\cdot + \tau_* - \varepsilon) + \delta_* - v )''(x_2) = g(u(x_2+ \tau_* - \varepsilon)) - g(v(x_2))\]
We conclude that $\delta_*=0$ which implies the existence of $\varepsilon>0$ such that $u(\cdot - \tau_* + \varepsilon)<v$ for $x> x_1$. This is in contradiction with the definition of $\tau_*$.  
\end{proof}

\subsubsection{Monotonicity results} \label{monotonicityresults}

This section is devoted to the proof of the monotonicity of the solutions introduced in Definition \ref{intermediate3}.

\begin{proof}[Proof of Proposition \ref{intermediate5}]
We adopt the strategy of \cite{BerRodRyz}. We prove in a first step that any solution $u$ must be monotone in $\mathbb{R}_-$ and $(L, +\infty)$. The second step is to prove by contradiction that it is also monotone in $(0,L)$. Indeed, if it is not monotone, we may construct a super-solution bellow than the non-monotone solution $u$ and we conclude to the existence of a new non-trivial solution smaller than $u$.

\vspace{0.2cm}
\textbf{Step 1. }\textit{The solution $u$ is monotone in $\mathbb{R}_-$ and $(L, + \infty)$. } We will only prove the monotonicity in $\mathbb{R}_-$, the other part follows from similar arguments. We use the ideas of the proof of Lemma 3.6 (c) of \cite{BerRodRyz}. \\
Let $\delta>0$ be such that $u(0) < 1-\delta$. Since $\underset{x \to -\infty}{\lim} u(x)  = 1$, we deduce the existence of $R>0$ such that $1-\delta < u(x)$ for all $x<-R$. It follows that for $\tau$ large enough we have that 
\[u(0) \leq  u_\tau (0) := u(0-\tau) .\]
Since $u_\tau$ is also a solution of \eqref{PS1} restricted to $\mathbb{R}_-$, the maximum principle in unbounded domains implies that $u < u_\tau $. We define 
\[\tau_* := \inf \left\lbrace \tau > 0 , \quad u(x) < u_\tau(x), \ \forall x \in \mathbb{R}_- \right\rbrace.\]
It is clear that $\tau_* < R$. Next, it suffices to show that $\tau_* = 0$. Indeed, if $\tau_* = 0$, then for any $x<0$, $\tau > 0$, we have that $u(x) < u(x-\tau)$, i.e. $u$ is decreasing. By contradiction, assume that $\tau_*>0$ and let $\xi = \underset{x \in [-R, 0]}{\inf} (u_{\tau_*}(x) - u(x))$. We distinguish two cases:\\
\begin{itemize}
    \item \textbf{Case 1 :} \textit{$\xi>0$. } In this case, there exists $\varepsilon \in ]0, \tau_*[$ such that 
    \[ \underset{x \in [-R, 0]}{\inf} u_{\tau_*-\varepsilon}(x) - u(x) >0 .\]
    The maximum principle in unbounded domain gives that $u_{\tau_*-\varepsilon}(x) - u(x) \geq 0$ in $]-\infty, -R[$. It is in contradiction with the definition of $\tau_*$. 
    \item \textbf{Case 2 :} \textit{$\xi=0$. } In this case, by compactness of $[-R, 0]$, there exists $x_0 \in [-R, 0]$ such that $u_{\tau_*}(x_0) = u(x_0)$. The maximum principle implies that $u_{\tau_*}=u$. We introduce 
    \[ R_\delta = \inf \left\lbrace R>0, \quad u(x) < 1-\delta \quad \text{ for } \ x \geq -R \right\rbrace\]
    (remark that since $u(0) < 1 -\delta$, we have $R_\delta <+\infty$). In the same way, we define 
    \[R_\delta^* = \inf \left\lbrace R>0, \quad u_{\tau_*}(x) < 1-\delta \quad \text{ for } \quad x \geq -R \right\rbrace.\]
    Since  $u(x)>1-\delta$ for all $x < R_\delta$, it follows that $u_{\tau_*}(R_\delta) = u(R_\delta - \tau_*) > 1-\delta$ and thus $R_\delta^*>R_\delta$. However, this is in contradiction with the equality $u = u_{\tau_*}$. The claims holds true because

\end{itemize}

\vspace{0.2cm}

\textbf{Step 2.} \textit{The solution $u$ is decreasing in $(0, L)$. } Assume by contradiction that $u$ is not decreasing. We recall that by construction $u$, is the minimal solution greater than the trivial solution $0$.\\ 
First, we claim that for any solution, there exists a finite number of $x_n$ where $u'$ changes its sign. Indeed, if the claim is false, we deduce from the Rolle theorem that there exists $x_\infty \in [0,L]$ such that $u'(x_\infty)  = u''(x_\infty) = 0$. But this is impossible since
\[ -\mu u(x_\infty) = -cu'(x_\infty) - u''(x_\infty) = 0.\]
Therefore, we deduce that there exists a finite number of $(x_n)_{0 \leq n \leq N}$ such that $0 \leq x_0 \leq ... \leq x_N \leq L$ and $u'(x_n + \varepsilon)>0$ for any $\varepsilon$ small enough. We construct a non-trivial super-solution $\overline{u}$ from $u$ which satisfies $\overline{u} \leq u$. This concludes the proof of Proposition \ref{intermediate5} since it would imply the existence of a new non-trivial solution $\widetilde{u}$ such that $0 \leq \widetilde{u} \leq \overline{u} < u$. That would be in contradiction with the minimality of the solution $u$. \\

\vspace{0.2cm}

\textbf{Construction of $\overline{u}$. } Let $i \in \left\lbrace 0 , ..., N \right\rbrace$ be such that $u(x_i) = \underset{ k \in \left\lbrace 0, ..., N  \right\rbrace}{\inf} u(x_k)$. We distinguish 2 cases: $u(x_i) \geq u(L)$ and $u(x_i) < u(L)$.\\
\begin{itemize}
    \item \textbf{Case 1 :} $u(x_i) \geq u(L)$. We introduce 
    \[y = \sup \left\lbrace x \in \mathbb{R}, \quad u(y) = u(x_i) \right\rbrace.\]
    Then we deduce that $y \leq L$ since $u$ is decreasing in $(L, +\infty)$ and $u(y)\geq u(L)$. Next, we define piecewise $\overline{u}$:
    \[ \overline{u}(x)  = \left\lbrace\begin{aligned}&u(x) \text{ for } x \in ]-\infty, x_i[, \\ &u(x_i) \text{ for } x \in [x_i, y], \\ &u(x) \text{ for } x \in ]y, +\infty[. \end{aligned} \right. \]
    Since $G(\overline{u}(x) , x) = - \mu \overline{u}(x)<0$ for $x \in [x_i, y]$, we deduce that $-c \overline{u}'-\overline{u}''-G(\overline{u}, x) = 0$ if $x \in [x_i, y]^c$ and $-c \overline{u}'-\overline{u}''-G(\overline{u}, x)  = - G(\overline{u}(x), x) \geq  0$ if $x \in [x_i, y]$.
    \item \textbf{Case 2 :} $u(x_i) < u(L)$. As in the previous case, we introduce
    \[y = \sup \left\lbrace x \in \mathbb{R}, \quad u(y) = u(x_i) \right\rbrace.\]
    Then we deduce that $y > L$ since $u$ is decreasing in $(L, +\infty)$ and $u(x_i)\leq u(L)$. We define as above 
      \[ \overline{u}(x)  = \left\lbrace\begin{aligned}&u(x) \text{ for } x \in ]-\infty, x_i[, \\ &u(x_i) \text{ for } x \in [x_i, L], \\ &u(x-L+y) \text{ for } x \in ]L, +\infty[. \end{aligned} \right. \]
    As above, we deduce that for all $x \in [x_i, y]$, we have $G(\overline{u}(x), x) \leq 0$. We conclude as in the previous case. Since $y >L$, we have, thanks to the first part of the proof, that $u'(y)<0$. Then the function $\overline{u}$ satisfies 
    \[ \underset{ x \to L^-}{\lim} u'(x) = 0 \geq \underset{x \to L^+}{\lim} u'(x) = u'(y).\]
\end{itemize}
Therefore, $\overline{u}$ is a non-trivial super-solution. 
\end{proof}

\subsubsection{Study of $\Lambda(c)$}

\begin{proof}[Proof of Proposition \ref{intermdiate6}]
The function is well defined since there exists a solution for all $c<0$ thanks to Proposition \ref{intermediate1}. Next, we prove the assertions 1,2,3 and 4.

\vspace{0.2cm}

\textbf{Proof of assertion 1 :} \textit{Existence of an infimum $\Lambda_0>0$. } If such an infimum does not exist, we deduce that there exists a speed $c<0$ such that $\Lambda(c)<L_0$ where $L_0$ is the critical size mentioned in Theorem 2.6 in \cite{BerRodRyz}. Next, if we denote by $u$ the solution of \eqref{PS1} with parameters $(c,\frac{L_0}{2})$, it follows that $u$ is a super-solution of the following equation:
\begin{equation}\label{pbBer}
    \left\lbrace
    \begin{aligned}
    &-v'' = g(v)1_{]0, \frac{L}{2}[^c} - \mu v 1_{]0, \frac{L}{2}[} \quad \text{ for } x \in \mathbb{R}, \\
    &v(-\infty) = 1.
    \end{aligned}
    \right.
\end{equation}
Indeed, a direct computation gives $-u'' - G(u,x)  = c u' \geq 0$. This is in contradiction with \cite{BerRodRyz} (Theorem 2.6) because, from the existence of a such super-solution (which verifies $u(-\infty)=1$ and $u(\infty) = 0$), we would deduce the existence of a solution of \eqref{pbBer} which tends to $0$ as $x$ tends to $+\infty$. 

\vspace{0.2cm}

\textbf{Proof of assertion 2 :} \textit{$\Lambda$ is decreasing. } Take any speeds $c_1<c_2\leq0$. Let $L>\Lambda(c_1)$ be such that \eqref{PS1} with parameters $(c_1, L)$ admits a solution $u$. A straightforward computation shows that $u$ is a super-solution to \eqref{PS1} with parameters $(c_2, L)$. It follows that $\Lambda(c_2)<L$ for any $L>\Lambda(c_1)$. Passing to the lower limit, it follows 
\[ \Lambda(c_2) \leq \Lambda(c_1).\]

\vspace{0.2cm}

\textbf{Proof of assertion 3.}\textit{ $\underset{c \to -\infty}{\lim} \Lambda(c) = +\infty$. } We proceed by contradiction and assume that $\sup \Lambda(c) < \bar{\Lambda}$ for some positive constant $\bar{\Lambda}$. Next, let $u_c$ be the solution of \eqref{PS1} with parameters $(c, \bar{\Lambda})$. On one hand, with a similar analysis than in Proposition \ref{lemmasuper1}, we deduce that 
\[\begin{aligned}
&u_c(0) =\frac{ u_c(\bar{\Lambda})[ (|c| + \sqrt{c^2 + 4\mu}) e^{-\frac{(|c| - \sqrt{c^2 + 4\mu})\bar{\Lambda}}{2}} - (|c| - \sqrt{c^2 + 4\mu}) e^{-\frac{(|c| + \sqrt{c^2 + 4\mu})\bar{\Lambda}}{2}}]}{2\sqrt{c^2 + 4\mu}} \underset{c \to -\infty}{\longrightarrow}  u_c(\bar{\Lambda}) \\ 
&\text{ and } \quad | u_c'(0) | = \frac{8 \mu u_c(\bar{\Lambda})  e^{\frac{c \bar{\Lambda}}{2}} \sinh (\frac{\sqrt{c^2 + 4\mu} \bar{\Lambda}}{2})}{\sqrt{c^2 + 4\mu}} 
\underset{ c \to -\infty}{\longrightarrow } 0
\end{aligned}\]
where the latter limit holds because 
\[ e^{\frac{c \bar{\Lambda}}{2}} \sinh(\frac{\sqrt{c^2 + 4\mu} \bar{\Lambda}}{2}) \leq \frac 12 e^{\frac{c \bar{\Lambda}}{2} \left( 1 - \sqrt{1+ 4\frac{\mu}{c^2}} \right)} = \frac 12 \exp\left(\frac{ 2\mu \bar{\Lambda}}{c(1  + \sqrt{1+4\frac{\mu}{c^2}})}\right) .\]
On the other hand, since 
\[u_c'(\bar{\Lambda})^2 = -2 \int_0^{u_c(\bar{\Lambda})} g(u)du + c \int_{\bar{\Lambda}}^\infty u_c'(y)^2 dy, \]
it follows that $u_c(\bar{\Lambda}) \leq \beta $ (where $\beta$ is introduced in \eqref{alpha12}). We deduce that 
\[ u_c'(0)^2 = 2\int_{u_c(0)}^1 g(v)dv + 2|c|\int_{-\infty}^0 u_c'(x)^2 dx \geq  2  \underset{ u \in (0, \beta)}{\min }\int_{u}^1 g(v)dv > 0,\]
which implies the following contradiction 
\[ 0   <| u_c'(0)|  \underset{c\to -\infty}{\longrightarrow} 0.\]
\vspace{0.2cm}

\textbf{Proof of assertion 4.}\textit{ Study of the case $(c, \Lambda(c))$. } Let $u_L$ be a solution of \eqref{PS1} with parameters $(c, L)$. Let $\widetilde{v}$ be the solution of the following ODE
\[ \left\lbrace \begin{aligned}
&-c \widetilde{v}' - \widetilde{v}'' = g(\widetilde{v}), \\
&\widetilde{v}(L) = u_L(L), \quad \widetilde{v}'(L) = u_L'(L).
\end{aligned}\right.\]
We underline that $\widetilde{v}$ decreases in $]L, +\infty[$ and $\widetilde{v}(\infty) = 0$. If $\widetilde{v}$ is decreasing in the whole domain $\mathbb{R}$, we would deduce that either $\widetilde{v}$ diverges to $+\infty$ or $\widetilde{v}$ converges as $x \to -\infty$ to a zero of $g$ greater than $0$ : either $\alpha$ or $1$. Notice that the last case is impossible. Otherwise, we can construct a super-solution to \eqref{PS1} with parameters $(0, 0)$ by considering $\min(1, \widetilde{v}(\cdot - x_1))$ where $x_1$ is such that $\widetilde{v}(x_1) = 1$. This last super-solution is in contradiction with the main result of \cite{BerRodRyz}.
Then, it is clear that $\widetilde{v}(-\infty)< 1$ therefore, we have to consider the case $\widetilde{v}(-\infty) = \alpha$. If $\widetilde{v}(-\infty) = \alpha$, it would imply the existence of a traveling wave of speed $c$ connecting $0$ and $\alpha$, that is a  solution of 
\[ \left\lbrace \begin{aligned}
&- u_{\mathrm{TW}}'' - cu_{\mathrm{TW}}' = g(u_{\mathrm{TW}}), \\
&u(-\infty) = \alpha, \qquad u(+\infty) = 0.
\end{aligned} \right. \]
Therefore, we split the proof into two parts: the case where there does not exists a traveling wave $u_{\mathrm{TW}}$ solution of \eqref{TWalpha}, and the one where there exists such a traveling wave. 

\vspace{0.2cm}

\textbf{Part 1 : }\textit{ There does not exist a decreasing solution to \eqref{TWalpha}. } We deduce that $\widetilde{v}$ is not strictly decreasing in $\mathbb{R}$. It follows that $\widetilde{v}'$ changes its sign. We denote by $x_1=\sup\left\lbrace x < L, \quad \widetilde{v}'(x)  > 0 \right\rbrace$ 

Finally, we introduce  
\[ \varphi (x)  = \widetilde{v}(x + x_1).\]
We notice that $0$ is a local maximum  thus $\varphi'(0) = 0$ and $\varphi''(0)  <  0 $.
One has 
\begin{equation}\label{caractup0} 
\int_0^{\varphi(0)} g(u)du = c \int_{0}^\infty \varphi'(x)^2 dx.
\end{equation}
Notice that for $c<0$ since the right hand side of \eqref{caractup0} is strictly negative, we deduce that $\varphi(0) < \beta$. We also have that $\varphi''(0) < 0$ and we deduce that $\varphi(0) > \alpha$. From this function $\varphi$, we construct a \textit{minimal} solution. Let $u_L(x):= \varphi(x-L)$ be such that $u_L(L) = \varphi(0) := \varphi_0$. We notice that $u_L'(L) = 0$. From this function which satisfies \eqref{PS1} with parameters $(c,L)$ in $]L, +\infty[$, we construct a solution of \eqref{PS1} in the whole domain $\mathbb{R}$ by considering $L$ as a free parameter. First, we extend the solution in $[0,L]$:
\[ u_L(x)  =\varphi_0 e^{\frac{|c|(x-L)}{2}} \left[  \cosh\left(\frac{\sqrt{c^2 + 4 \mu}}{2}(x-L) \right) + \frac{c}{\sqrt{c^2 + 4\mu}} \sinh\left(\frac{\sqrt{c^2 + 4 \mu}}{2}(x-L)\right) \right].    \]
We deduce that 
\begin{align*}
    u_L(0)  =& \varphi_0 e^{\frac{-|c|L}{2}} \left[  \cosh\left(\frac{\sqrt{c^2 + 4 \mu}L}{2} \right) + \frac{|c|}{\sqrt{c^2 + 4\mu}} \sinh\left(\frac{\sqrt{c^2 + 4 \mu}L}{2}\right) \right], \\
    u_L'(0_+) =& \frac{c}{2} u(0) -\varphi_0 \frac{\sqrt{c^2 + 4 \mu}}{2} e^{\frac{-|c|L}{2}} \left[  \sinh\left(\frac{\sqrt{c^2 + 4 \mu}L }{2} \right) + \frac{|c|}{\sqrt{c^2 + 4\mu}} \cosh\left(\frac{\sqrt{c^2 + 4 \mu}L}{2}\right) \right] .
\end{align*}
Next, we extend the solution to $]-\infty, 0[$. According to the uniqueness (up to a translation) of the solution of \eqref{PS1} in $]-\infty, 0[$, we deduce that the solution is fully determined by the value $u_L(0)$. Moreover, the derivatives must match at $0$, therefore, we deduce that 
\begin{equation}\label{aboveeq} 
u_L'(0_+)  = - \sqrt{2 \int_{u_L(0)}^1 g(u)du + 2|c| \int_{-\infty}^0 u_{L}'(x)^2 dx}.
\end{equation}
The idea is to prove that there exists a size $L_*$ such that \eqref{aboveeq} has a solution. \\
On one hand, we notice that the left hand side is $0$ for $L=0$ whereas the right hand side is strictly negative. On the other hand, let $L_1$ be such that $u_{L_1}(0) = 1$ (such a size exists since $u_0(0) < 1$ and $u_L(0) \underset{ L  \to +\infty}{\rightarrow } + \infty$). By noticing that the left hand side verifies $\underset{ L \to L_1}{\lim} u_{L_1}'(0_+) < 0$ whereas the right hand side verifies $\underset{ L \to L_1}{\lim}  - \sqrt{2 \int_{u_L(0)}^1 g(u)du + 2|c| \int_{-\infty}^0 u_{L}'(x)^2 dx} =  0$, we conclude that a size $L_*$ which satisfies \eqref{aboveeq} does exist. \\
The case $c=0$ is already treated in \cite{BerRodRyz}.

Second, we claim that the above solution $u_*$ is the greatest solution of \eqref{PS1} with speed $c$: for any solution $u$ of \eqref{PS1} with parameter $(c, L)$ then we have 
\[ L_*\leq  L \quad \text{ and } \quad u \leq u_*\]
(with equality if and only if $L=L_*$). Indeed, from uniqueness of the tail (Lemma \ref{MainLemmaPS1}), we claim that there does not exist solutions bigger than the one above. Indeed, if such a solution $w$ of \eqref{PS1} exists with parameters $(c, L_*')$ such that $L_*' < L_*$, then, we deduce that $u_*(L_*) < w(L_*)$. Moreover by Lemma \ref{MainLemmaPS1}, there exists $r_1, \tau_1>0$ such that 
\[ \forall x > r_1, \ u_*(x)  = w(x - \tau_1) .\]
The Cauchy Lipschitz Theorem, implies that for all $x>L_*'$, $w(x) = \varphi(x-\tau_2)$ for some $\tau_2>0$. Since $\varphi''(0)<0$ and $L_*'<L_*$, we conclude that $w$ is increasing in an open subset included in $(L_*', L_*)$. It is in contradiction with Proposition \ref{intermediate5}. We conclude that $L_* = \Lambda(c)$ and this closes the proof of the first part. 

\vspace{0.25cm}

\textbf{Part 2 : }\textit{ There exists a decreasing traveling wave solution to \eqref{TWalpha}. }  The first step is to prove that there exists a size $L>0$ and some $x_0 \in \mathbb{R}$ such that \eqref{PS1} with parameters $(c, L)$ admits a solution which satisfies
\[u(L)  = u_{\mathrm{TW}}(x_0) \quad \text{ and } \quad u'(L) = u_{\mathrm{TW}}'(x_0). \]
It will follow that for any size $L$ such that \eqref{PS1} admits a solution $u_L$, then $u_L$ is comparable to $u_{\mathrm{TW}}$ in a neighbourhood of $+\infty$. \\
\textit{Construction of the solution. } Let $\kappa \in (0,1)$ and $x_\kappa \in \mathbb{R}$ be such that $u_{\mathrm{TW}}(x_\kappa) = \kappa \alpha$. We also define $\sigma_\kappa = u'_{TW}(x_\kappa)$. The goal is to find a size $L$ such that \eqref{PS1} with parameters $(c, L)$ admits a solution $u_L$ which verifies $u_L(L) = \kappa \alpha$ and $u_L'(L) = \sigma_{\kappa} $. If such a solution exists, it is defined by $u_{\mathrm{TW}}$ for $x>L$. Starting from $u_{\mathrm{TW}}$, we reconstruct $u_L$ in $]0, L[$: we have 
\[ \begin{aligned}
u_L(x) &= e^{\frac{|c|(x-L)}{2}} \left[ \kappa \alpha \cosh\left( \frac{\sqrt{c^2 + 4\mu}(x-L)}{2} \right) - \frac{(2 \sigma_\kappa + |c| \kappa \alpha )}{\sqrt{c^2 + 4\mu}} \sinh \left( \frac{\sqrt{c^2 + 4\mu}(x-L)}{2} \right) \right], \\
u_L'(x) & = \frac{|c|}{2}u_L(x) \\
&\  + \frac{\sqrt{c^2 + 4\mu}}{2}  e^{\frac{|c|(x-L)}{2}} \left[ \kappa \alpha \sinh\left( \frac{\sqrt{c^2 + 4\mu}(x-L)}{2} \right) - \frac{(2 \sigma_\kappa + |c| \kappa \alpha )}{\sqrt{c^2 + 4\mu}} \cosh \left( \frac{\sqrt{c^2 + 4\mu}(x-L)}{2} \right) \right].
\end{aligned}\]
The limit $x \to 0^+$ leads to 
\[ \begin{aligned}
u_L(0^+) &= e^{\frac{cL}{2}} \left[ \kappa \alpha \cosh\left( \frac{\sqrt{c^2 + 4\mu}L}{2} \right) + \frac{(2 \sigma_\kappa + |c| \kappa \alpha )}{\sqrt{c^2 + 4\mu}} \sinh \left( \frac{\sqrt{c^2 + 4\mu}L}{2} \right) \right], \\
u_L'(0^+) & =  \  e^{\frac{cL}{2}} \left[ \left(\sigma_\kappa ( \frac{|c|}{\sqrt{c^2 + 4 \mu}} - 1) - \frac{\kappa \alpha \mu}{\sqrt{c^2 + 4 \mu}} \right) \sinh\left( \frac{\sqrt{c^2 + 4\mu}L}{2} \right) -\sigma_\kappa  \cosh \left( \frac{\sqrt{c^2 + 4\mu}L}{2} \right) \right].
\end{aligned}\]
On the other hand, let $u_{-\infty}$ be the tail of any nonincreasing solution of \eqref{PS1} with a speed $c$. We deduce that the value $u_{-\infty}(0)$ determines $u_{-\infty}'(0^-)$ by the following identity:
\[ u_{-\infty}'(0^-) =  - \sqrt{2\int_{u_{-\infty}(0^-)}^1 g(u) du + 2  |c| \int_{-\infty}^0 u_{-\infty}'(x)^2 dx }. \]
Since if a solution exists, it is $C^1$ and it must satisfy
\[ u_{-\infty}(0^-)  = u_L(0^+) \quad \text{ and } \quad u_{-\infty}'(0^-) =  u_L'(0^+).\]
Therefore, we look for $L>0$ such that 
\begin{equation}\label{egderiv0}
    u_L'(0) = - \sqrt{2\int_{u_L(0)}^1 g(u) du + 2|c| \int_{-\infty}^0 u_{-\infty}'(x)^2 dx } .
\end{equation} 
On one hand, we claim that if $L=0$ then there holds
\[- \sqrt{2\int_{\kappa \alpha }^1 g(u) du + 2|c| \int_{-\infty}^0 u_{-\infty}'(x)^2 dx } < \sigma_\kappa < 0.\] Indeed, if it is not the case, there exists a point $x_0<0$ and $x_1 \in \mathbb{R}$ such that $u_{-\infty}(x_0) = u_{\mathrm{TW}}(x_0 + x_1)$ and $u_{-\infty}'(x_0) = u_{\mathrm{TW}}'(x_0 + x_1) $ (see the phase portrait in Figure \ref{figExplicationPreuve}). It would imply that $u_{-\infty}(-\infty) = \alpha$: a contradiction. 

\begin{figure}[h!]
\centering
\includegraphics[scale=0.65]{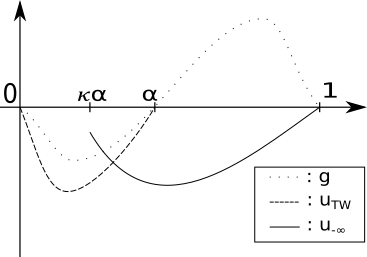}
\caption{Phase portrait of $u_{-\infty}$ and $u_{\mathrm{TW}}$. }
Since both solutions are decreasing, if $u_{-\infty}'(0)<u_{\mathrm{TW}}'(0)$ then both curves intersect : a contradiction.
\label{figExplicationPreuve}
\end{figure}

On the other hand, there exists $L_1>0$ such that $u_{L_1}(0) = 1$ (since $u_L(0) \underset{ L \to +\infty}{\longrightarrow} +\infty$). For a such size $L_1$, the right-hand side of \eqref{egderiv0} is $0$ whereas the left hand side is strictly negative.

We conclude that for any $\kappa \in (0,1)$, there exists a size $L_\kappa$ such that \eqref{PS1} with parameters $(c, L_\kappa)$ admits a solution which verifies 
\[ u_{L_\kappa} (L_\kappa) = \kappa \alpha \quad \text{ and } \quad u'(L_\kappa) = \sigma_\kappa.\]

\vspace{0.2cm}

We have proved that $\Lambda(c) > 0$ (Assertion 1). We have also proved that any solution behaves like $u_{\mathrm{TW}}$ in a neighbourhood of $+\infty$. It is clear that there does not exist a solution which satisfies $u(L)>\alpha$ (otherwise $u$ behaves like $u_{\mathrm{TW}}$ and $u(L) > \max u_{\mathrm{TW}}$ : a contradiction). We deduce that $\alpha = \underset{L > \Lambda(c)}{\sup } u(L)$. Moreover, since all the solutions are ordered (thanks to Proposition \ref{intermediate5}), we deduce that $L_\kappa \underset{ \kappa \to 1}{\rightarrow} \Lambda(c)$. It follows that 
\[  \underset{L \to  \Lambda(c), \ L>\Lambda(c)}{\lim  } u(L) = \alpha \quad \text{ and } \quad  \underset{L \to \Lambda(c), L>\Lambda(c)}{\lim } u'(L)=0.\]
\end{proof}

\begin{remark}
From the last part of the previous proof, we deduce that if there exists a traveling wave solution of \eqref{TWalpha}, then there exists a non-trivial minimal solution of 
\[\left\lbrace 
\begin{aligned}
&-cu' - u'' = g(u) 1_{\left\lbrace x < 0, \ x>\Lambda(c) \right\rbrace } - \mu u 1_{\left\lbrace 0 < x < \Lambda(c) \right\rbrace},\\
&u(-\infty) = 1
\end{aligned}
\right.\]
and satisfies $u(x) = \alpha$ for all $x\geq \Lambda(c)$. (here, by \textit{minimal} we mean that if there exists a solution $v$ of the above system such that $0 < v \leq u$ then $v=u$). 
\end{remark}

We conclude this subsection with the proof of Corollary \ref{Corollarygconv}.

\begin{proof}[Proof of Corollary \ref{Corollarygconv}]

It is a direct application of 3. Proposition \ref{intermdiate6} and Theorem 4.10 in \cite{Perthame}. This last result tells us that such a traveling wave exists if and only if $|c|\geq 2 \sqrt{g'(\alpha)}$. Indeed, since $g''(\alpha) > 0$, we deduce that this traveling wave exists and is solution of 
\[ \left\lbrace 
\begin{aligned}
&-c u_{\mathrm{KPP}}' - u_{\mathrm{KPP}}'' = g(u_{\mathrm{KPP}})\\
&u_{\mathrm{KPP}}(-\infty) = \alpha, \quad u_{\mathrm{KPP}}(+\infty)  = 0.
\end{aligned}\right.\]
The stable state $0$ invades the unstable state $\alpha$ with a speed $c$. Since $u_{\mathrm{KPP}} \in (0, \alpha)$, we deduce that the reaction term may be understood as a Fisher-KPP type reaction term.
\end{proof}

\subsubsection{Characterization of the case $L<\Lambda(c)$}

\begin{proof}[Proof of Proposition \ref{intermediate7}] Let $L<\Lambda(c)$ be such that the solution $u$ of 
\[\left\lbrace 
\begin{aligned}
&-cu' - u'' = g(u) 1_{\left\lbrace x < 0, \ x>\Lambda(c) \right\rbrace } - \mu u 1_{\left\lbrace 0 < x < \Lambda(c) \right\rbrace},\\
&u(-\infty) = 1
\end{aligned}
\right.\]
does not satisfies $u(\infty) = 0$. We will also denote by $u_{\infty}$ the solution of \eqref{PS1} with parameters $(c, \Lambda(c)+\varepsilon)$ with $\varepsilon$ a small parameter that we will fix later on.

First, we prove that $u$ is increasing in $[L, +\infty[$. Assume, by contradiction, that there exists $x \in [L, +\infty[$ such that $u'(x)< 0$. We define $x_0 = \inf \left\lbrace x > L, \quad u'(x)<0 \right\rbrace$. The point $x_0$ exists by hypothesis and satisfies $u(x_0) \geq u(L) > u_{\infty}(\Lambda(c)+ \varepsilon)$ according to Proposition \ref{intermediate5} (and for $\varepsilon$ small enough). Without lost of generality, we may assume $\varepsilon$ small enough such that $u'(x_0)<u_{\infty}'(\Lambda(c)+ \varepsilon)<0$. Next, we introduce $x_1 = \inf \left\lbrace x > x_0, \quad u'(x)>0 \right\rbrace$ such that $u$ is decreasing in $(x_0, x_1)$. We claim that $x_1 = +\infty$. \\
Indeed, if $x_1 < +\infty$, since $u(x_1)$ is a local minimum, we deduce that 
\[ u'(x_1) = 0, \quad g(u(x_1)) = -u''(x_1) < 0 \quad \Rightarrow \quad   u(x_1)\in(0,\alpha).\] 
Using again that $u'(x_1) = 0$, we conclude, thanks to a phase portrait analysis (see Figure \ref{LinfLambda}), the existence of $y \in ]x_0, x_1[$ and $z_0>\Lambda(c)$ such that $u(y) = u_\infty(z_0+y)$ and $u'(y) = u_{\infty}'(z_0+y)$. In this phase portrait, it is quite clear that if $u'<0$ only on an interval. The desired contradiction follows. The existence of this contact point between the two curves $(u, u')$ and $(u_{\infty}, u_{\infty}')$ implies the following contradiction : $u(x) \to 0$ as $x \to +\infty$. 

\begin{figure}[h!]
    \centering
    \includegraphics[scale=0.75]{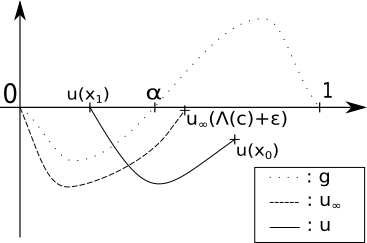}
    \caption{Existence of $y \in ]x_0, x_1[$ and $z_0>\Lambda(c)$ such that $u(y) = u_\infty(z_0+y)$ and $u'(y) = u_{\infty}'(z_0+y)$}
    \label{LinfLambda}
\end{figure}

 It follows that $u'<0$ in $(L, +\infty)$. Since $u$ is bounded, we deduce that $u$ converges either to $\alpha$ or to $0$. Since $L<\Lambda(c)$, $u$ converges to $\alpha$ (by definition of $\Lambda(c)$). Therefore, we have $u(x)>\alpha$ and $u'(x)<0$ for all $x>L$. We deduce that
\[ -u''(x) = c u'(x) + g(u(x)) > 0.\]
Thus, $u$ is a decreasing concave function which converges as $x$ tends to $+\infty$. This is a contradiction. We conclude that $u'(x) \geq 0$ for all $x\in [L, +\infty[$. 

\vspace{0.4cm}

Next, we prove that $u$ converges to $1$. The solution is bounded by the maximum principle and from the previous part of the proof, the solution is increasing for $x>L$. Hence, it converges to a $0$ of $g$. Since $u(L)>  \alpha$,  we conclude that $u$ converges to $1$. 

\vspace{0.4cm}

It remains to prove that $u'=0$ admits only one solution. If it admits two solutions, they are necessary in $(0,L)$. Then, one can construct a super-solution $\overline{u}$ of \eqref{PS1} with parameters $(c, L)$ such that $\overline{u}  \leq u$. Indeed, assume that $u'$ changes its sign at least twice. Since $u'(0)<0$ and $u'(L)>0$, we deduce that there exists $x_0, x_1 \in (0,L)$ such that 
\[x_0<x_1,  \quad u'(x_{0,1}) = 0 \quad \text{ and } \quad u''(x_{0,1})> 0.\]
Moreover, without loss  of generality we may assume that
\begin{enumerate}
    \item there does not exist any other point in $(x_0, x_1)$ that satisfies such conditions,
    \item $u(x_0) \geq u(x_1)$.
\end{enumerate}
We define $x_2 = \sup \left\lbrace x \in [x_0, x_1], \quad u(x)  = u(x_0) \right\rbrace$. According to the convexity of the curves, we deduce that $u'(x_2) \leq 0$ and $u(x)>u(x_0)$ for any $x\in (x_0, x_2)$. Next, we define 
\[ \overline{u}(x)  = \left\lbrace \begin{aligned}
&u(x) && \text{ for } x < x_0,  \\
&u(x_0) && \text{ for } x \in [x_0, x_2], \\
&u(x) && \text{ for } x >x_2.
\end{aligned}\right.\]
Following the same computations as in the proof of Proposition \ref{intermediate5}, $\overline{u}$ is a super-solution of \eqref{PS1}. Moreover, $\overline{u} \leq u$. This is in contradiction with the minimality of the solution $u$. Therefore, we deduce that $u$ changes its sign only once.

\end{proof}

\section{The sterile male release strategy}\label{SectionPS2}

This section is organized as follow: first we introduce some theoretical intermediate results and definitions. The next subsection is devoted to the proof of the properties of $m_S$. Indeed, the dynamics of the sterile males is independent of the one of the females (notice that the contrary is obviously false). Next, we prove the existence of a solution and the properties of this solution. 

\subsection{Intermediate results}

First, we introduce the main tool : a simple comparison principle for the system \eqref{PS2}. Since $(m_S \mapsto g(f, m_S) ) $ is decreasing, we conclude to the following comparison principle:
\begin{proposition}[Comparison principle for \eqref{PS2}]\label{comparisonPS2}
Let $(f_1, m_1)$ and $(f_2, m_2)$ satisfying
\[ 
\left\lbrace 
\begin{aligned}
&\partial_t f_1 -cf_1'-f_1'' - g(f_1, m_1) \leq 0, \\
& \partial_t m_1 - c m_1' - m_1'' - g_M (m_1 ) \geq 0, 
\end{aligned}
\right.
\quad \text{ and } \quad 
\left\lbrace\begin{aligned}
&\partial_t f_2 -cf_2'-f_2'' - g(f_2, m_2) \geq 0, \\
& \partial_t m_2 - c m_2' - m_2'' - g_M (m_2 ) \leq 0, 
\end{aligned} 
\right.\]
 and 
 \[ f_1(t=0) \leq f_2(t=0) \quad \text{ and } \quad m_2(t=0) \leq m_1(t=0).\]
 Then for all $t \geq 0$, we have
  \[ f_1(t) \leq f_2(t) \quad \text{ and } \quad m_2(t) \leq m_1(t).\]
\end{proposition}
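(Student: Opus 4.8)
The plan is to exploit the triangular (cascade) structure of the system \eqref{PS2}: the equation for the sterile males is independent of $f$, so one first controls the pair $(m_1,m_2)$ by a purely scalar argument, and then transfers this information to the equation on $f$ via the monotonicity of $g$ in its second variable stated in \eqref{H4PS2}. Throughout I would work under the standing assumption (implicit in the statement and supplied by the a priori bounds obtained elsewhere) that the four functions are bounded, so that the parabolic maximum principle on the whole line $\mathbb{R}$ is available.

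\textbf{Step 1: comparison of the sterile-male components.} The functions $m_1$ and $m_2$ are, respectively, a super- and a sub-solution of the \emph{same} parabolic equation $\partial_t m - c m' - m'' = g_M(m)$, whose right-hand side $g_M(m) = \mathrm{M}\,\mathbf{1}_{\{0<x<L\}} - \mu_s m$ is affine in $m$ (hence globally Lipschitz), the indicator being only a bounded measurable source. Since $m_2(t=0)\le m_1(t=0)$, I would apply the classical linear parabolic comparison principle to the bounded function $m_1-m_2$ and conclude $m_2(t,x)\le m_1(t,x)$ for all $t\ge 0$ and $x\in\mathbb{R}$.

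\textbf{Step 2: comparison of the female components.} By \eqref{H4PS2}, $m\mapsto g(f,m)$ is decreasing, so Step 1 gives the pointwise inequality $g(f_2,m_2)\ge g(f_2,m_1)$. Combining this with the differential inequality satisfied by $f_2$ yields $\partial_t f_2 - c f_2' - f_2'' - g(f_2,m_1) \ge \partial_t f_2 - c f_2' - f_2'' - g(f_2,m_2) \ge 0$, i.e. $f_2$ is a super-solution of the \emph{scalar} equation $\partial_t f - c f' - f'' = g(f,m_1)$, in which $m_1$ is now a fixed bounded function. The hypothesis on $f_1$ says exactly that $f_1$ is a sub-solution of this same scalar equation, and $f_1(t=0)\le f_2(t=0)$. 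Since $g$ is smooth and $f_1,f_2$ are bounded, the nonlinearity $f\mapsto g(f,m_1(t,x))$ satisfies a one-sided Lipschitz bound uniform in $(t,x)$, so the scalar parabolic comparison principle on $\mathbb{R}$ applies and gives $f_1(t,x)\le f_2(t,x)$ for all $t\ge 0$ and $x\in\mathbb{R}$, completing the proof.

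The only genuinely delicate point is the use of the maximum principle on the unbounded domain $\mathbb{R}$: one must rule out ``mass coming in from infinity'', which is handled by the boundedness (or, more than enough, the exponential tail bounds) of the functions, after multiplying the difference of sub/super-solutions by a suitable exponential weight to absorb the transport and zeroth-order terms (a Phragmén--Lindel\"of-type argument). The discontinuous coefficient $\mathbf{1}_{\{0<x<L\}}$ creates no obstruction, since it appears only as a bounded source in the linear $m_S$-equation and does not enter the scalar problem used to compare $f_1$ and $f_2$.
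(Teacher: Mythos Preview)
Your argument is correct and is precisely the cascade argument the paper has in mind: the paper does not spell out a proof, it simply notes that ``since $(m_S \mapsto g(f, m_S))$ is decreasing, we conclude to the following comparison principle'', which is exactly your Step~1/Step~2 decomposition. Your additional remarks on boundedness and the Phragm\'en--Lindel\"of weight are the standard technical caveats one needs on~$\mathbb{R}$ and go beyond what the paper writes.
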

It illustrates the intuition: the more sterile males are released, the less efficiently females reproduce. From this comparison principle, we construct solutions to \eqref{PS2} which satisfy $f(-\infty) = 1$ (the stable non-trivial equilibrium) and $f(+\infty) = 0$. 
We begin by providing a complete description of $m_S$. 
\begin{proposition}\label{Propo_Ms}
For any parameters $(L, c, \mathrm{M}) \in \mathbb{R}^{+*} \times \mathbb{R}^{-} \times \mathbb{R}^{+*}$, there exists a weak-solution $m_S$ to 
\begin{equation}\label{eqms}
    -c m_S' - m_S'' + \mu_s m_S = \mathrm{M} 1_{(0,L)}.
\end{equation}
Moreover, the following assertions hold true:
\begin{enumerate}
    \item $m_S \in H^1(\mathbb{R}) \cap C^1(\mathbb{R})$, 
    \item $0 \leq m_S \leq  \frac{\mathrm{M}}{\mu_s}$, 
    \item for all $x \in \mathbb{R}$
    \begin{equation}
    \label{Ms}
    m_S(x)  = \mathrm{M}L \int_{\mathbb{R}}  \frac{\sinc ( \frac{L\xi}{2}) e^{2i\pi \xi x +i\pi L}}{  2[ 4 \pi^2 \xi^2 +2 \pi i c \xi + \mu_s] } d\xi \qquad \left(\text{with} \quad \sinc(y) = \frac{\sin(y)}{y}\right),
    \end{equation}
    \item $\underset{ x \to \pm \infty}{\lim} m_S(x) = 0$, 
    \item there exists $x_0 \in (0,L)$ such that $m_S$ is increasing in $(-\infty, x_0) $ and decreasing in $(x_0, +\infty)$.
\end{enumerate}
\end{proposition}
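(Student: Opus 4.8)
The plan is to read \eqref{eqms} as a linear constant-coefficient ODE with the bounded, compactly supported source $\mathrm{M}1_{(0,L)}$, and to obtain every qualitative property from an explicit piecewise representation. Let $r_\pm=\tfrac{-c\pm\sqrt{c^2+4\mu_s}}{2}$ be the roots of $r^2+cr-\mu_s=0$; since $c<0$ one has $r_-<0<r_+$ (indeed $r_+r_-=-\mu_s$ and $r_++r_-=-c$). On $(-\infty,0)$ and on $(L,+\infty)$ the equation is homogeneous, so the only bounded solutions are $m_S=A\,e^{r_+x}$ on the left and $m_S=D\,e^{r_-x}$ on the right, while on $(0,L)$ the general solution is $\mathrm{M}/\mu_s+B\,e^{r_+x}+C\,e^{r_-x}$. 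Requiring continuity of $m_S$ and of $m_S'$ at $0$ and at $L$ gives a $4\times4$ linear system for $(A,B,C,D)$ whose homogeneous part is nonsingular --- otherwise it would produce a nonzero $H^1(\mathbb{R})$ solution of the homogeneous equation, which is impossible since the Fourier symbol $4\pi^2\xi^2+2\pi i c\xi+\mu_s$ has real part $\ge\mu_s>0$ and never vanishes. The resulting function is $C^2$ off $\{0,L\}$, matches in $C^1$ there, decays exponentially at $\pm\infty$, hence lies in $H^1(\mathbb{R})\cap C^1(\mathbb{R})$ and is the unique weak solution; this gives assertions~1 and~4. Existence of the weak solution in $H^1(\mathbb{R})$ also follows abstractly from Lax--Milgram applied to $a(u,v)=\int u'v'-c\int u'v+\mu_s\int uv$, which is coercive because $\int u'u=0$ on $H^1(\mathbb{R})$, followed by the bootstrap $m_S''=-cm_S'+\mu_s m_S-\mathrm{M}1_{(0,L)}\in L^2$.

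For the Fourier representation \eqref{Ms} (assertion~3) I would simply apply the Fourier transform to \eqref{eqms}: dividing by the never-vanishing symbol above and using that $\widehat{1_{(0,L)}}$ equals $L\sinc(\cdot)$ times a unimodular phase, \eqref{Ms} follows by Fourier inversion. This step is bookkeeping and I would keep it short.

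The bounds $0\le m_S\le\mathrm{M}/\mu_s$ (assertion~2) come from comparison with the constant sub- and super-solutions $0$ and $\mathrm{M}/\mu_s$. Concretely, testing the weak formulation against $(m_S)_-$ and against $(m_S-\mathrm{M}/\mu_s)_+$, and using $\int m_S'(m_S)_-=0$, $\int m_S(m_S)_-=-\|(m_S)_-\|_{L^2}^2$, $\int m_S'(m_S)_-'=-\|(m_S)_-'\|_{L^2}^2$ (and the analogous identities for the second truncation), yields $\|(m_S)_-'\|_{L^2}^2+\mu_s\|(m_S)_-\|_{L^2}^2\le0$ and the corresponding inequality for $(m_S-\mathrm{M}/\mu_s)_+$, hence $0\le m_S\le\mathrm{M}/\mu_s$; alternatively one argues at an interior extremum directly on the explicit solution. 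The strict inequalities $0<m_S<\mathrm{M}/\mu_s$ then follow from the strong maximum principle for $u\mapsto-cu'-u''+\mu_s u$ (equivalently from the explicit form, which forces $A,D>0$).

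Assertion~5 is the heart of the matter. From the explicit outer expressions, $m_S=A\,e^{r_+x}$ with $A>0$, $r_+>0$ is strictly increasing on $(-\infty,0)$, and $m_S=D\,e^{r_-x}$ with $D>0$, $r_-<0$ is strictly decreasing on $(L,+\infty)$; the $C^1$ matching then forces $m_S'(0^+)>0>m_S'(L^-)$, so $m_S'$ vanishes somewhere in $(0,L)$. The key remark is that at \emph{any} critical point $x_0\in(0,L)$, equation \eqref{eqms} gives $m_S''(x_0)=\mu_s m_S(x_0)-\mathrm{M}<0$ (using $m_S(x_0)<\mathrm{M}/\mu_s$), so every interior critical point of $m_S$ in $(0,L)$ is a strict local maximum. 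If there were two such points $x_0<x_1$, then $m_S$ would attain an interior minimum on $[x_0,x_1]$ at some $x_*\in(x_0,x_1)$, and $x_*$ would again be a critical point, hence --- by the same computation --- a strict local maximum, a contradiction. Therefore there is exactly one critical point $x_0\in(0,L)$, with $m_S'>0$ on $(0,x_0)$ and $m_S'<0$ on $(x_0,L)$; combined with the monotonicity on the two outer intervals, this shows $m_S$ is increasing on $(-\infty,x_0)$ and decreasing on $(x_0,+\infty)$. I expect this unimodality argument, together with the care needed in the $C^1$ matching at $0$ and $L$ (and in checking that the matching system is nonsingular), to be the only non-routine parts; everything else is standard linear ODE and elliptic theory.
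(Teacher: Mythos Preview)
Your proposal is correct, and the overall skeleton matches the paper: Lax--Milgram for existence and $H^1$-regularity, Fourier transform for the representation \eqref{Ms}, comparison with the constants $0$ and $\mathrm{M}/\mu_s$ for assertion~2, and --- for assertion~5 --- the key observation that any critical point of $m_S$ lying in $(0,L)$ must be a strict local maximum because the equation forces $m_S''=\mu_s m_S-\mathrm{M}<0$ there.

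The one genuine methodological difference is that you build the explicit piecewise solution $A e^{r_+x}\,/\,\mathrm{M}/\mu_s+Be^{r_+x}+Ce^{r_-x}\,/\,De^{r_-x}$ and read off immediately the strict monotonicity on $(-\infty,0)$ and $(L,+\infty)$ as well as the exponential decay at $\pm\infty$. The paper stays more abstract: it gets assertion~4 from $\widehat{m_S}\in L^1$ (Riemann--Lebesgue), and for assertion~5 it argues by contradiction on each half-line $(-\infty,x_0)$ and $(x_0,+\infty)$, producing a spurious local maximum $x_1$ and local minimum $x_2$ and showing that the sign of the source term forces them into incompatible locations relative to $(0,L)$. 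Your explicit construction is a shortcut that bypasses that half-line contradiction entirely, at the mild cost of checking nondegeneracy of the $4\times4$ matching system (which you justify correctly via uniqueness of the $H^1$ solution). Either route works; yours is arguably cleaner for this constant-coefficient problem, while the paper's argument would transfer more readily to variable-coefficient perturbations where no closed form is available.
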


Now, we have a full description of $m_S$, we state
\begin{proposition}\label{f}
For any $c\leq 0$ and $L>0$, there exists a parameter $\mathrm{M}$ such that $\eqref{PS2}$ with parameters $(c, \mathrm{M}, L)$ admits a solution $f$.
\end{proposition}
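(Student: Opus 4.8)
The plan is to exhibit, for a suitably large $\mathrm{M}$, a sub-solution $\phi_-$ and a super-solution $\phi_+$ of \eqref{PS2} with $\phi_-\le\phi_+$ and $\phi_\pm(-\infty)=1$, $\phi_\pm(+\infty)=0$, and then to produce a solution $f$ between them by the classical monotone sub-/super-solution argument, justified here by the comparison principle (Proposition~\ref{comparisonPS2}); the boundary limits of $f$ will essentially follow by squeezing. Throughout I would use three elementary facts. Since $\partial_m g<0$ and $m_S\ge 0$ (Proposition~\ref{Propo_Ms}), $g(\cdot,m_S)\le g(\cdot,0)$ pointwise; since $g(f,\cdot)$ decreases to $-\mu f$, one has $g(f,m_S)\ge-\mu f$ for every $x$, and moreover there are $\mu_*>0$, $m^*>0$ with $g(f,m)\le-\mu_* f$ for all $f\in[0,1]$, $m\ge m^*$ (combine the limit in \eqref{H4PS2}, the sign $\partial_f g(\cdot,0)<0$ near $0$ from \eqref{H3S2}, and a Dini-type argument using \eqref{H5PS2}). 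Finally, by linearity $m_S=\mathrm{M}\bar m$ where $\bar m>0$ is the unimodal, exponentially decaying solution of \eqref{eqms} with $\mathrm{M}=1$; hence $\{m_S\ge m^*\}$ is an interval whose length tends to $+\infty$ as $\mathrm{M}\to+\infty$, while for $\epsilon$ small $\{m_S\le m_\epsilon\}$ contains a half-line $(-\infty,a(\mathrm{M})]$ with $a(\mathrm{M})$ lying to the left of $\{m_S\ge m^*\}$.

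For the super-solution I would copy the three-piece construction of Proposition~\ref{proposuperPS1}. Fix $\gamma_0\in(0,\alpha_1)$ as in Lemma~\ref{Lemmasuper2} and let $v_2$ be the associated stable-manifold solution of the $g(\cdot,0)$-ODE, decreasing to $0$. By Lemma~\ref{lemmasuper1} with $\mu$ replaced by $\mu_*$, there is a length $\ell_0$ carrying a positive decreasing solution $v_1$ of $-cv_1'-v_1''=-\mu_* v_1$ running from $1$ to $\gamma_0$ with the right one-sided endpoint derivatives. Choose $\mathrm{M}$ large enough that $\{m_S\ge m^*\}$ contains an interval $I$ of length $\ell_0$, and set $\phi_+\equiv 1$ left of $I$, $\phi_+=v_1$ on $I$, $\phi_+=v_2$ (translated) right of $I$. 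On $I$, $-cv_1'-v_1''=-\mu_* v_1\ge g(v_1,m_S)$; to the left, $g(1,m_S)\le g(1,0)=0$; to the right, $-cv_2'-v_2''=g(v_2,0)\ge g(v_2,m_S)$, so the $v_2$ piece survives regardless of the values of $m_S$ there. The derivative jumps at the two junctions point the right way by Lemmas~\ref{lemmasuper1} and~\ref{Lemmasuper2}.

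The sub-solution is the delicate point, and I expect it to be the main obstacle: because $g(\cdot,m_S)\le g(\cdot,0)$ goes the wrong way, the standing profile of Proposition~\ref{proposubPS1} is no longer a sub-solution, so one must construct a sub-solution for the genuinely non-autonomous reaction $g(\cdot,m_S)$ while also keeping it below $\phi_+$. My plan: fix a small $\epsilon>0$ and let $\chi_\epsilon$ be the decreasing profile of $-c\chi_\epsilon'-\chi_\epsilon''=g(\chi_\epsilon,0)-\epsilon$ which emanates at $-\infty$ from the equilibrium $1_\epsilon<1$ of $g(\cdot,0)-\epsilon$ near $1$ and hits $0$ at a finite point with strictly negative slope — existence is a phase-plane/energy argument which works precisely because $\int_0^1 g(s,0)\,ds>0$. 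The constant slack $\epsilon$ makes $\chi_\epsilon$ a sub-solution of $-cu'-u''=g(u,m_S)$ on $\{x:\,g(\cdot,0)-g(\cdot,m_S)\le\epsilon\}\supseteq\{m_S\le m_\epsilon\}$, where $m_\epsilon$ is the inverse modulus of uniform continuity from \eqref{H5PS2}. Translate $\chi_\epsilon$ so that its zero sits at (or to the left of) $a(\mathrm{M})$, and extend it by $0$ afterwards — legitimate since $g(0,m_S)=0$ and the derivative jumps upward. The resulting $\phi_-$ is a sub-solution everywhere (its non-trivial part lies in $(-\infty,a(\mathrm{M})]\subset\{m_S\le m_\epsilon\}$, and $0$ is always a sub-solution), and $\phi_-\le\phi_+$ because $\phi_-\le 1_\epsilon<1=\phi_+$ to the left of $I$ and $\phi_-\equiv 0$ from $a(\mathrm{M})$ on.

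With the ordered pair in hand, the classical monotone sub-/super-solution argument, using Proposition~\ref{comparisonPS2}, yields a steady solution $f$ of \eqref{PS2} with parameters $(c,L,\mathrm{M})$ and $\phi_-\le f\le\phi_+$. Since $\phi_-\equiv 0$ and $\phi_+\to 0$ near $+\infty$, squeezing gives $f(+\infty)=0$; near $-\infty$, $f$ is trapped in $[1_\epsilon,1]$, and a standard Liouville / asymptotic-autonomous argument (the only bounded entire solution of $-cF'-F''=g(F,0)$ valued in $[1_\epsilon,1]$ is $F\equiv 1$, since $1$ is the unique zero of $g(\cdot,0)$ in that range and is a hyperbolic equilibrium) gives $f(-\infty)=1$. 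The one genuinely non-routine ingredient is the sub-solution; within it the most care is needed near $f=1$ (where the slack $\epsilon$ and $m_S$ both degenerate, so one must compare exponential decay rates) and in the borderline case $c=0$, for which the advection provides no slack and the $\epsilon$-perturbation is exactly what replaces it.
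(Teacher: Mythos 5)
Your proposal is correct and follows essentially the same route as the paper: a three‑piece super‑solution ($1$, then a solution of $-cv'-v''=-\mu_* v$ on a long interval where $m_S$ is large, then the stable‑manifold tail of the $g(\cdot,0)$‑ODE), a sub‑solution supported in $(-\infty,a(\mathrm{M})]$ built from a perturbed bistable profile placed where $m_S$ is small and extended by $0$, and the sub/super‑solution method plus an asymptotic argument to upgrade the limit at $-\infty$ to $1$. The only (cosmetic) difference is that you gain the slack for the sub‑solution by subtracting a constant $\epsilon$ and invoking the uniform continuity in \eqref{H5PS2}, whereas the paper works with $g(\cdot,m)$ for a fixed small $m$ (Lemma~\ref{lemma:g(f,m)}) and uses the monotonicity \eqref{H4PS2} together with the sign of $-c\psi_-'$.
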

As for the killing strategy, the idea is to build a super-solution $\psi_+$ and a sub-solution $\psi_-$ that are well ordered ($\psi_- \leq \psi_+$). We will construct these sub and super-solutions such that for $\varepsilon$ small enough, there holds
\[ \psi_+(\infty) = 0, \quad \psi_{- | \mathbb{R}^+} = 0, \quad \psi_+(-\infty) = 1 \quad \text{ and } \quad \psi_-(-\infty) > 1-\varepsilon. \]

To play the an equivalent role as $\Lambda(c)$ in the previous problem, we introduce 
\[ \Pi (c, L) = \inf \left\lbrace \mathrm{M}>0, \quad \eqref{PS2} \text{ with parameters }(c, L, \mathrm{M}) \text{ admits a solution} \right\rbrace.\]
We prove some properties of $\Pi (c, L)$:
\begin{proposition}\label{Pi}
The function $\Pi$ is well defined. Moreover, for a fixed speed $c \leq 0$, the function $L \mapsto \Pi (c, L)$ is decreasing and satisfies 
\[ \underset{ L \to 0}{\lim } \  \Pi (c, L) = +\infty \quad \text{ and } \quad  \underset{ L \to +\infty}{\lim } \Pi (c, L) = \Pi_\infty(c) >0.\]
Finally, for any fixed size $L>0$, there holds
\[ \underset{  c \to -\infty}{\lim} \ \Pi(c,L) = +\infty.\]
\end{proposition}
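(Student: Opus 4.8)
The backbone of the proof is the following \emph{non-existence principle}: there is a threshold $m_*>0$ such that whenever the associated sterile-male profile satisfies $\sup_{\mathbb{R}} m_S<m_*$, the problem \eqref{PS2} with those parameters admits no solution. To obtain it, note that by \eqref{H3S2}, \eqref{H4PS2} and \eqref{H5PS2} one can fix $m_*>0$ small enough that $\tilde g:=g(\cdot,m_*)$ is of bistable type with $\int_0^1\tilde g(s)\,ds>0$; its upper stable state $b\in(\alpha,1)$ and unstable state $a\in(0,b)$ then satisfy that the bistable front of $\tilde g$ joining $b$ at $-\infty$ to $0$ at $+\infty$ propagates with a \emph{positive} speed $c^*>0$. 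Suppose now $f$ solves \eqref{PS2} with $\sup m_S<m_*$. Since $m\mapsto g(f,m)$ is decreasing and $0\le m_S<m_*$, we get $-cf'-f''=g(f,m_S)\ge \tilde g(f)$, so that $w(t,x):=f(x-ct)$ (which equals $f(x+|c|t)$ since $c<0$) is a super-solution of $\partial_t w-\partial_{xx}w=\tilde g(w)$ whose initial datum $f$ is front-like (limits $1$ at $-\infty$ and $0$ at $+\infty$, crossing the unstable value $a$). By the stability of bistable traveling fronts, the solution of $\partial_t v-\partial_{xx}v=\tilde g(v)$ issued from $f$ converges, at each fixed $x$, to $b$ as $t\to+\infty$; hence by parabolic comparison $\liminf_{t\to+\infty}f(x+|c|t)\ge b>\alpha$, which contradicts $f(x+|c|t)\to f(+\infty)=0$.

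\emph{Well-definedness, positivity, monotonicity in $L$, and the limit $L\to+\infty$.} By Proposition~\ref{f} the set of $\mathrm{M}>0$ for which \eqref{PS2} with parameters $(c,L,\mathrm{M})$ is solvable is nonempty, so $\Pi(c,L)$ is a well-defined element of $[0,+\infty)$; moreover, since $m_S\le \mathrm{M}/\mu_s$ by Proposition~\ref{Propo_Ms}(2), the non-existence principle forces $\Pi(c,L)\ge \mu_s m_*>0$ for every $(c,L)$. For monotonicity in $L$, let $L_1<L_2$ and assume \eqref{PS2} with $(c,L_1,\mathrm{M})$ is solvable. Since $1_{(0,L_1)}\le 1_{(0,L_2)}$, the maximum principle for \eqref{eqms} (equivalently Proposition~\ref{comparisonPS2} applied to the $m_S$-equation) gives $m_S^{(L_1)}\le m_S^{(L_2)}$ on $\mathbb{R}$; as $g$ decreases in $m$, a super-solution of the $f$-equation at size $L_1$ is also a super-solution at size $L_2$, while the sub-solution produced in the proof of Proposition~\ref{f} can be taken with its transition region pushed arbitrarily far into $\{x<0\}$, where $m_S^{(L_2)}$ is exponentially small by Proposition~\ref{Propo_Ms}(4), so it remains a sub-solution at size $L_2$ and stays below the super-solution. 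The sub-/super-solution method then yields a solution of \eqref{PS2} with $(c,L_2,\mathrm{M})$, whence $\Pi(c,L_2)\le\Pi(c,L_1)$. Being non-increasing in $L$ and bounded below by $\mu_s m_*>0$, the function $L\mapsto\Pi(c,L)$ converges as $L\to+\infty$ to some $\Pi_\infty(c)\ge\mu_s m_*>0$, which is exactly the claimed statement.

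\emph{The limits $L\to 0$ and $c\to-\infty$.} In both cases it suffices, by the non-existence principle, to show that $\|m_S\|_{L^\infty(\mathbb{R})}\to 0$ in the relevant limit, \emph{uniformly} for $\mathrm{M}$ in a bounded interval $[0,\mathrm{M}_0]$: then $\sup m_S<m_*$ for every $\mathrm{M}\le\mathrm{M}_0$ once $L$ is small enough (resp. $-c$ large enough), so no solution exists, i.e. $\Pi(c,L)>\mathrm{M}_0$, and $\mathrm{M}_0$ being arbitrary we conclude $\Pi(c,L)\to+\infty$. For the decay of $m_S$ we use the representation \eqref{Ms}, which gives
\[
\|m_S\|_{L^\infty(\mathbb{R})}\ \le\ \frac{\mathrm{M}\,L}{2}\int_{\mathbb{R}}\frac{d\xi}{|4\pi^2\xi^2+2\pi i c\,\xi+\mu_s|}\ \le\ \frac{\mathrm{M}\,L}{\sqrt 2}\int_{\mathbb{R}}\frac{d\xi}{4\pi^2\xi^2+2\pi|c||\xi|+\mu_s}.
\]
For fixed $c$ the last integral is a finite constant, so $\|m_S\|_{L^\infty}\le C(c)\,\mathrm{M}\,L\to0$ as $L\to0$; for fixed $L$, the substitution $\xi=\eta/|c|$ turns it into $\frac{1}{|c|}\int_{\mathbb{R}}\frac{d\eta}{4\pi^2\eta^2/c^2+2\pi|\eta|+\mu_s}$, which is $O(\log|c|/|c|)$ as $|c|\to\infty$, so $\|m_S\|_{L^\infty}\le C\,\mathrm{M}\,L\,\frac{\log|c|}{|c|}\to0$ as $c\to-\infty$. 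Both bounds are uniform in $\mathrm{M}\le\mathrm{M}_0$, which finishes the proof.

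\emph{Main difficulty.} The crux is the non-existence principle, and the delicate point there is to turn the heuristic ``there is no traveling wave against the direction of the natural invasion'' into a rigorous statement in the non-autonomous setting: one must check, using only \eqref{H3S2}--\eqref{H5PS2}, that $g(\cdot,m_*)$ is genuinely bistable with positive mass for $m_*>0$ small (so the associated front travels rightward), and then run the parabolic comparison in the moving frame $x\mapsto x+|c|t$ with a propagating sub-solution lying below the front-like datum $f$. A secondary technical point is the robustness of the sub-solution of Proposition~\ref{f} as $L$ (hence $m_S$) increases: since $g$ decreases in $m$, only the super-solution inequality carries over for free, and one must localize the sub-solution where $m_S$ is negligible — this is what makes $L\mapsto\Pi(c,L)$ monotone.
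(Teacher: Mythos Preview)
Your proof is correct and rests on the same key mechanism as the paper's: once $\sup_{\mathbb{R}} m_S$ is small, $g(\cdot,m_S)\ge g(\cdot,m_*)$ with $g(\cdot,m_*)$ bistable of positive mass, and a solution of \eqref{PS2} would contradict the fact that the corresponding autonomous front travels to the right. The organization, however, differs in two respects worth noting. First, you isolate this as a single reusable \emph{non-existence principle} and combine it with the pointwise bound $m_S\le \mathrm{M}/\mu_s$ to obtain the uniform lower bound $\Pi(c,L)\ge \mu_s m_*$; this immediately yields both the positivity and the limit $L\to+\infty$, whereas the paper argues each of the three limits by a separate contradiction. Second, to derive the contradiction you run a parabolic comparison in the moving frame (spreading of the solution of $\partial_t v-\partial_{xx}v=\tilde g(v)$ under a front-like datum), while the paper instead uses the solution $f$ as a super-solution together with the sub-solution of Lemma~\ref{subsolutionPS2} to manufacture a traveling wave of speed $c\le0$ for a bistable equation with positive mass, contradicting \cite{aronsonweinber}. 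Both routes are standard and equivalent in spirit; your packaging is somewhat more economical. Your explicit $L^\infty$ bounds on $m_S$ via \eqref{Ms} are the quantitative counterpart of the paper's appeals to \eqref{Ms} and dominated convergence, and your treatment of the monotonicity in $L$ (carrying the super-solution over and re-localizing the sub-solution where $m_S$ is small) matches what the paper does with Lemma~\ref{subsolutionPS2}.
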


To prove Proposition \ref{Pi}, we will use a corollary of Propositions \ref{comparisonPS2} and \ref{Propo_Ms}:
\begin{corollary}\label{ComparisonPrinciplePS22}
Let $(c, L_1, \mathrm{M}_1)$ and $(c, L_2, \mathrm{M}_2)$ be two sets of positive parameters such that \eqref{PS2} admits two solutions $(m_S^1, f_1)$ and $(m_S^2, f_2)$ and such that
\[ \mathrm{M}_1 \leq \mathrm{M_2} \quad \text{ and } \quad L_1 \leq L_2.\]
Then we have 
\[ m_S^1 \leq m_S^2 \quad \text{ and } \quad f_2 \leq f_1.\]
\end{corollary}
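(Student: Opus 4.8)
The plan is to derive Corollary \ref{ComparisonPrinciplePS22} from the comparison principle of Proposition \ref{comparisonPS2} together with the monotonicity properties of $m_S$ obtained in Proposition \ref{Propo_Ms}, by first comparing the sterile male densities and then feeding that comparison into the equation for the females. The key observation is that the comparison for $m_S$ is almost immediate, and that once it is in hand the monotonicity of $g$ in its second variable turns the comparison for $f$ into a routine application of the maximum principle for the (stationary, non-autonomous) equation $-cf'-f''=g(f,m_S)$.

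First I would establish $m_S^1 \le m_S^2$. Since $m_S^i$ solves $-c(m_S^i)'-(m_S^i)''+\mu_s m_S^i = \mathrm{M}_i 1_{(0,L_i)}$ and we assume $\mathrm{M}_1\le\mathrm{M}_2$ and $L_1\le L_2$, we have $\mathrm{M}_1 1_{(0,L_1)} \le \mathrm{M}_2 1_{(0,L_2)}$ pointwise. Hence $w := m_S^2 - m_S^1$ satisfies $-cw'-w''+\mu_s w \ge 0$ on $\mathbb{R}$ with $w(\pm\infty)=0$ (by assertion 4 of Proposition \ref{Propo_Ms}), and $w\in H^1(\mathbb{R})\cap C^1(\mathbb{R})$. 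If $w$ attained a negative minimum at some finite $x_*$, then $w(x_*)<0$, $w'(x_*)=0$, $w''(x_*)\ge 0$, which forces $-cw'(x_*)-w''(x_*)+\mu_s w(x_*) < 0$, a contradiction; and the vanishing at $\pm\infty$ rules out the infimum being approached only at infinity. Alternatively, one can quote the time-dependent comparison principle of Proposition \ref{comparisonPS2}: regarding both $m_S^i$ as stationary solutions of the parabolic equation $\partial_t m - cm' - m'' = \mathrm{M}_i 1_{(0,L_i)} - \mu_s m$ with, say, identical initial data is not quite what is needed, so I would instead run the evolution starting from $m_S^2$ against the (sub-)solution $m_S^1$, using that $m_S^1$ is now a subsolution of the equation with the larger source $\mathrm{M}_2 1_{(0,L_2)}$ — here the monotonicity in $x$ of the indicator is irrelevant and only the pointwise source inequality matters. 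In any case $m_S^1\le m_S^2$.

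Next I would deduce $f_2 \le f_1$. We have $-cf_i'-f_i'' = g(f_i,m_S^i)$ with $f_i(-\infty)=1$, $f_i(+\infty)=0$. By hypothesis \eqref{H4PS2}, $m\mapsto g(f,m)$ is decreasing, so $g(f_2,m_S^2)\le g(f_2,m_S^1)$ pointwise; that is, $f_2$ is a subsolution of the equation $-cf'-f''=g(f,m_S^1)$ whose solution is $f_1$. Now the standard sliding / maximum principle argument applies: both $f_1,f_2\in[0,1]$ share the same limits at $\pm\infty$, and the relevant zero $0$ of the reaction term is linearly stable (since $g(0,m)=0$ and, by \eqref{H3S2}, $\partial_f g(0,0)<0$; more precisely one uses the exponential decay estimates at $\pm\infty$ of the type invoked in Lemma \ref{MainLemmaPS1} via \cite{GuoMorita}), so one can compare $f_2$ with translates $f_1(\cdot-\tau)$, slide $\tau$ down to $0$, and conclude $f_2\le f_1$ on all of $\mathbb{R}$. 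This is exactly the mechanism of Proposition \ref{comparisonPS2} once $m_S^1\le m_S^2$ is known: indeed $(f_2,m_S^2)$ and $(f_1,m_S^1)$ fit the hypotheses of that proposition (with the roles $f_1\leftrightarrow f_2$, $m_1\leftrightarrow m_S^2$, $m_2\leftrightarrow m_S^1$), since $m_S^2$ is a supersolution and $m_S^1$ a subsolution of the common $g_M$-equation and $f_2$ is a subsolution, $f_1$ a supersolution of the $f$-equation with the \emph{same} $m_S$ inserted after using monotonicity. Applying Proposition \ref{comparisonPS2} with ordered data at $t=0$ (e.g. $f_2(0)\le f_1(0)$ and $m_S^1(0)\le m_S^2(0)$, taken to be the stationary profiles themselves) and passing to the stationary limit yields the claim.

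The main obstacle is the comparison $f_2\le f_1$ rather than the one for $m_S$: because the $f$-equation is only defined up to translation (its solutions are the minimal traveling wave profiles and a priori the two profiles $f_1,f_2$ are not aligned), one cannot directly invoke a one-line maximum principle, and must be slightly careful about the behavior at $\pm\infty$. The clean way is to first invoke $m_S^1\le m_S^2$ and the uniform-in-$m$ continuity and monotonicity of $g$ from \eqref{H4PS2}--\eqref{H5PS2}, then reduce to a comparison for the \emph{same} autonomous-by-parts equation and run the sliding argument exactly as in the proof of Proposition \ref{comparisonPS2} / Lemma \ref{MainLemmaPS1}; the exponential tail bounds near $0$ guarantee the sliding parameter is finite and can be pushed to zero. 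I would state this reduction explicitly and then refer to Proposition \ref{comparisonPS2} for the remaining details, so the proof of the corollary stays short.
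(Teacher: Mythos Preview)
The paper gives no separate proof of this corollary; it simply presents it as an immediate consequence of Propositions \ref{comparisonPS2} and \ref{Propo_Ms}. Your argument for $m_S^1\le m_S^2$ (pointwise ordering of the sources plus the elliptic maximum principle for the linear operator $-c\partial_x-\partial_{xx}+\mu_s$) is correct and is exactly what the paper has in mind.

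For the inequality $f_2\le f_1$ your write-up contains two issues. First, the worry about translation invariance is misplaced: once $m_S^i$ is fixed, the equation $-cf'-f''=g(f,m_S^i(x))$ is genuinely non-autonomous in $x$ (the release window is anchored at $(0,L_i)$), so there is no sliding freedom and no alignment problem to resolve. Second, your attempt to invoke Proposition \ref{comparisonPS2} ``with ordered data at $t=0$ taken to be the stationary profiles themselves'' is circular: the required initial ordering $f_2(t=0)\le f_1(t=0)$ is precisely the conclusion you are after.

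The clean fix, and what the paper tacitly uses, is the following. From $m_S^1\le m_S^2$ and \eqref{H4PS2} you get
\[
-cf_1'-f_1''-g(f_1,m_S^2)=g(f_1,m_S^1)-g(f_1,m_S^2)\ge 0,
\]
so $f_1$ is a supersolution of the $(c,L_2,\mathrm{M}_2)$-problem. In the paper the solution $f_2$ is obtained by the sub/super-solution construction (Lemmas \ref{SuperSolutionPS2}--\ref{subsolutionPS2}) and is the minimal admissible one, exactly as in Definition \ref{intermediate4} for \eqref{PS1}; hence $f_2\le f_1$ follows by construction, with no need for a sliding argument. This is precisely how the inequality is exploited in the proof of Proposition \ref{Pi}. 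I would therefore drop the translation discussion and the parabolic detour, and state the $f$-comparison in one line via minimality.
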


\subsection{Proof of Theorem \ref{mainPS2}}

Theorem \ref{mainPS2} is a direct application of Propositions \ref{f} and \ref{Pi}. 

\subsection{Study of the sterile male density}

\begin{proof}[Proof of Proposition \ref{Propo_Ms}]
We prove each of the points of Proposition \ref{Propo_Ms}. 

\textit{Proof of 1. } If we study the variational form associated to \eqref{eqms}, it follows
\[ \int_\mathbb{R} \left( (m_S')^2 + \mu_s m_S^2  -  \frac{c}{2} \left(m_S^2 \right)' \right) dx = \int_{0}^L \mathrm{M}  m_S dx. \]
The right hand side is a continuous application from $H^1(\mathbb{R})$ to $\mathbb{R}$. The bilinearity of the application 
\[ a(u,v) = \int_\mathbb{R} u' v' + \mu_s u v + \frac{c}{2} \left(uv \right)' dx \]
is straightforward. Since $H^1(\mathbb{R}) = H^1_0(\mathbb{R})$, we deduce that $\int_\mathbb{R} \left(u^2 \right)' dx = 0 $ and it follows
\[ \min(1, \mu_s) \| u \|_{H^1(\mathbb{R}) } \leq a(u,u).\]
We conclude, thanks to the Lax-Milgram Theorem, the existence of $m_S \in H^1(\mathbb{R})$ solution of \eqref{eqms} with parameters $(c, L, \mathrm{M})$. Moreover, by elliptic regularity, we deduce that $m_S \in C^1(\mathbb{R})$.

\vspace{0.4cm}

\textit{Proof of 2. } By remarking that $0$ is a solution and that $\frac{\mathrm{M}}{\mu_s}$ is a supersolution. We deduce thanks to the weak maximum principle that 
\[ 0 < m_S < \frac{\mathrm{M}}{\mu_s}.\]

\vspace{0.4cm}

\textit{Proof of 3. } According to \textit{1.}, we have $m_S \in L^2(\mathbb{R})$. If we perform a Fourier transform, it follows:
\[ 2 \pi i c \xi \hat{m}_s + 4 \pi^2 \xi^2 \hat{m}_s +\mu_s \hat{m}_s = \frac{\mathrm{M}L \sinc (\frac{ L \xi}{2}) e^{i\pi L} }{2}\]
where we have adopted the following definitions :
\[\hat{f}(\xi) = \int_\mathbb{R} f(x) e^{-2i\pi x\xi} dx .\]
We deduce that 
\[ \hat{m_S}(\xi) = \frac{\mathrm{M}L \sinc (\frac{ L \xi}{2}) e^{i\pi L}}{  2  [ 4 \pi^2 \xi^2 +2 \pi i c \xi + \mu_s] }.\]
Since $\underset{\xi \in \mathbb{R}}{\inf} | 4 \pi^2 \xi^2 + 2 \pi i c \xi + \mu_s| \geq \underset{\xi \in \mathbb{R}}{\inf} 4 \pi^2 \xi^2 + \mu_s>0$, we deduce that $\hat{m_S} \in L^1(\mathbb{R}) \cap L^2(\mathbb{R})$ and we conclude that
\[ m_S (z) =\mathrm{M}L\int_{\mathbb{R}}  \frac{ \sinc ( \frac{L \xi}{2 }) e^{2i\pi \xi z+ i\pi L}}{  2[ 4 \pi^2 \xi^2  - 2 \pi i c \xi + \mu_s] } d\xi.\]

\vspace{0.4cm}

\textit{Proof of 4. } By noticing that $\hat{m}_s \in L^1(\mathbb{R})$, we conclude that $m_S(x) \underset{ x \to \pm \infty}{\longrightarrow } 0$. 

\vspace{0.4cm}

\textit{Proof of 5. } Thanks to\textit{ 4.}, we deduce that $m_S$ takes its maximum at some point $x_0 \in \mathbb{R}$. Since the reaction term is positive only in $(0, L)$, the weak maximum principle implies that $x_0 \in (0,L)$.\\
Next, we prove by contradiction that $m_S$ is monotonous on $(-\infty, x_0)$ and on $(x_0, +\infty)$. If it is not the case in $
(-\infty, x_0)$ (the other case is treated with the same method), since $m_S(x) \underset{ x \to -\infty}{\rightarrow} 0$, we deduce that there exists $x_1 \in (-\infty, x_0)$ such that $m_S'(x_1)=0$ and $-m_S''(x_1) > 0$ (in a weak sense). Since $m_S(x_0) = \max m_S$ and $m_S'(x_1 + \varepsilon)<0$, it follows the existence of $x_2 \in (x_1, x_0)$ such that $m_S'(x_2) = 0$ and $-m_S''(x_2)<0$ (in a weak sense). Regarding the sign of the reaction term, is follows that $0<x_1 $ and $x_2<0$. Recalling  that $x_1 < x_2 < x_0$, it follows the desired contradiction. 
\end{proof}

\subsection{Construction of the solution $f$}

We adopt the same strategy to construct the sub and the super-solutions as for equation \eqref{PS1}. First, we begin with a super-solution constructed by parts, next, we focus on the subsolution. 

\subsubsection{Construction of a super-solution}

First, we fix a size $L>0$ and $c \leq 0$. We split the domain into three parts : $\mathbb{R}_-, (0, L_*)$ and $(L_*, +\infty)$ with $L_* \geq L$ an arbitrary size of interval. The density of released mosquitoes $\mathrm{M}$ at each position of the interval $(0, L)$ is a free parameter. Next, we introduce, for $u\in\mathbb{R}_+$
\begin{equation}
g_{super}(u, x) = \left\lbrace \begin{aligned}
&g(u, 0) && \text{ for } x \leq 0, \\
&-\frac{\mu}{2} u && \text{ for } 0 \leq x \leq L_* , \\
&g(u, 0) && \text{ for } \text{ for } x > L_*. 
\end{aligned}\right. 
\end{equation}
We claim the following result:
\begin{lemma}\label{Lemmasuperrealpb1}
For any $L>0$, there exists $\mathrm{M}_0>0$ such that for any $\mathrm{M}> \mathrm{M}_0$, we have $g(u,x) \leq  g_{super}(u,x)$ for all $x \in \mathbb{R}$ and $u\geq 0$. 
\end{lemma}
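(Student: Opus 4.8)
\textbf{Proof proposal for Lemma \ref{Lemmasuperrealpb1}.}

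The plan is to compare $g(u,x)$ with $g_{super}(u,x)$ region by region. On the regions $x \leq 0$ and $x > L_*$ the two functions coincide by definition, since there the sterile male density plays no role in $g_{super}$ and we are comparing $g(u,0)$ with itself; so there is nothing to prove there. The whole content of the lemma is therefore the estimate on the middle region $0 \leq x \leq L_*$, where we must show $g(u, m_S(x)) \leq -\frac{\mu}{2} u$ for all $u \geq 0$, provided $\mathrm{M}$ is large enough. The key observation is that, by Proposition \ref{Propo_Ms}, the sterile male density $m_S$ on $(0,L_*)$ can be made uniformly large by choosing $\mathrm{M}$ large: indeed, from the integral representation \eqref{Ms} (or more simply from a scaling/monotonicity argument together with the comparison principle), $m_S$ depends linearly on $\mathrm{M}$ on the interval $(0,L)$, and using the lower bound on $m_S$ interior to the support one gets $\min_{x \in [0,L_*]} m_S(x) \to +\infty$ as $\mathrm{M} \to +\infty$ (note $L_* \geq L$ may force us to restrict to a compact sub-interval or to argue on $[0,L]$ and extend; here I would first handle $L_* = L$ and then observe that increasing $\mathrm{M}$ still dominates on $(L,L_*)$ where $g_{super} = -\frac{\mu}{2}u$ and $g(u, m_S) \leq -\frac{\mu}{2}u$ continues to hold once $m_S$ is large there too — which again follows from $m_S > 0$ and the asymptotics in (2) and the positivity interior estimate).

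Next I would exploit hypothesis \eqref{H4PS2}, namely $\partial_m g < 0$ and $g(f, m) \to -\mu f$ as $m \to +\infty$, together with the uniform continuity in $m$ from \eqref{H5PS2}. Uniform continuity gives: for the target gap $\frac{\mu}{2} u$, there exists $m^* > 0$ (independent of $u$ in the relevant bounded range, and in fact controllable for all $u \geq 0$ because $g(0,m)=0$ by \eqref{H5PS2} and one can work with the Lipschitz-type modulus) such that $g(u,m) \leq g(u,\infty) + \frac{\mu}{2}u = -\mu u + \frac{\mu}{2} u = -\frac{\mu}{2} u$ for all $m \geq m^*$. More carefully: since $m \mapsto g(u,m)$ is decreasing with limit $-\mu u$, and since the convergence is uniform in $m$ (assumption \eqref{H5PS2}), there is a single threshold $m^*$ so that $g(u,m) - (-\mu u) \leq \frac{\mu}{2} u$ whenever $m \geq m^*$; the uniformity in $u$ is what \eqref{H5PS2} buys us (one should check that "uniformly continuous with respect to $m$" is meant to deliver exactly this — a modulus independent of the $f$-variable — and combine it with $g(0,m)=0$ to handle small $u$). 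Then choose $\mathrm{M}_0$ so that $\mathrm{M} > \mathrm{M}_0$ forces $m_S(x) \geq m^*$ for all $x \in [0,L_*]$; this is possible by the first paragraph.

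Putting the pieces together: for $\mathrm{M} > \mathrm{M}_0$ and $x \in [0,L_*]$ we get $g(u,x) = g(u, m_S(x)) \leq -\frac{\mu}{2} u = g_{super}(u,x)$, while for $x \notin [0,L_*]$ equality holds; hence $g \leq g_{super}$ everywhere, which is the claim. I expect the main obstacle to be the uniformity issue in the second paragraph: making the threshold $m^*$ genuinely independent of $u$ over all of $u \geq 0$ (not just on a bounded range) requires careful use of \eqref{H5PS2} and the normalization $g(0,m)=0$; if the uniform continuity hypothesis only gives control on bounded $u$-intervals, one would instead invoke an a priori $L^\infty$ bound on the solution $f$ (e.g. $0 \leq f \leq 1$, which is the range that matters for the super-solution construction) and restrict attention to $u \in [0,1]$, which is harmless for the purpose of building the super-solution $\psi_+$.
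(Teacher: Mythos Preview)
Your approach is essentially the same as the paper's: trivialize the outer regions, and on $[0,L_*]$ use that $\min_{x\in[0,L_*]} m_S(x)\to+\infty$ as $\mathrm{M}\to+\infty$ (linearity of \eqref{Ms} in $\mathrm{M}$) together with $g(u,m)\to-\mu u$ to force $g(u,m_S(x))\le -\tfrac{\mu}{2}u$. The paper's proof is two sentences and invokes exactly these two ingredients.

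One small correction: on $x\le 0$ and $x>L_*$ the two functions do \emph{not} coincide. There $g_{super}(u,x)=g(u,0)$, while $g(u,x)=g(u,m_S(x))$ with $m_S(x)>0$ (Proposition~\ref{Propo_Ms}). The inequality $g(u,m_S(x))\le g(u,0)$ holds not by equality but by the monotonicity $\partial_m g<0$ from \eqref{H4PS2}. This is immediate, so it does not affect the structure of your argument, but your stated reason is wrong. Your discussion of the uniformity-in-$u$ issue is more careful than the paper's (which simply writes ``converges locally uniformly''); your proposed fix of restricting to $u\in[0,1]$ is exactly the right move for the intended application.
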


\begin{proof}
The assertions holds true for $x<0$ and $x>L_*$. For $x \in ]0, L_*[$, it suffices to remark that for a fixed value $L_*$, as $\mathrm{M}$ increases $\underset{x \in (0, L_*)}{\min} m_S(x)$ increases (see \eqref{Ms}). The conclusion follows by remarking that $g(u, x)$ converges locally uniformly to $-\mu u$ as $\mathrm{M} \to +\infty$. 
\end{proof}

The two following Lemmas are similar to Lemmas \ref{lemmasuper1} and \ref{Lemmasuper2}. The proofs are identical, therefore we do not provide them.

\begin{lemma}\label{Lemmasuperrealpb2}
There exists $\gamma_0 >0$ and $\delta_0>0$ such that the solution of the following ODE
\begin{equation}\label{PS1super1MS}
\left\lbrace
\begin{aligned}
&-c v_2' - v_2'' = g_{super}(v_2,x),     \\
&v_2(L_*) = \gamma_0, \quad v_2'(L_*) =-\delta_0
\end{aligned}
\right.
\end{equation}
satisfies 
\[v_2(+\infty)  = 0 \quad \text{ and } \quad v_2'(+\infty) = 0.\]
\end{lemma}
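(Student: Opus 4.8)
The statement is the exact analogue of Lemma \ref{Lemmasuper2}, which the paper proved by the stable manifold theorem, so the plan is to reproduce that argument with $g_{super}(\cdot,x)$ in place of $g(\cdot)$ on the relevant interval. Note that for $x>L_*$ we have $g_{super}(v,x)=g(v,0)$, which by \eqref{H3S2} satisfies \eqref{H1S1}; in particular $g(0,0)=0$ and $g_u(0,0)=\partial_f g(0,0)<0$. Hence the only dynamics that matters for the limits at $+\infty$ is the autonomous ODE $-cv_2'-v_2''=g(v_2,0)$ on $(L_*,+\infty)$, and the constant function $0$ is a steady state.

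First I would rewrite the second-order ODE as the planar system
\begin{equation}
\left\lbrace
\begin{aligned}
&v_2'=w_2,\\
&w_2'=-cw_2-g(v_2,0),
\end{aligned}
\right.
\end{equation}
and linearize at the equilibrium $(0,0)$. The Jacobian has eigenvalues $\lambda_\pm=\tfrac{-c\pm\sqrt{c^2+4\,g'(0)}}{2}$; writing $g'(0):=\partial_f g(0,0)<0$ and $c\le 0$, one checks $\lambda_-<0<\lambda_+$, so $(0,0)$ is a saddle. The stable manifold theorem then furnishes a $C^1$ one-dimensional local stable manifold $W^s_{loc}$ through $(0,0)$, tangent at the origin to the eigenvector associated to $\lambda_-$, namely (up to scaling) $\bigl(1,\tfrac{-c-\sqrt{c^2+4g'(0)}}{2}\bigr)$, which has strictly negative second component. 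Any trajectory starting on $W^s_{loc}$ converges to $(0,0)$ as the independent variable $\to+\infty$, so $v_2(+\infty)=0$ and $v_2'(+\infty)=0$.

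It remains to produce the initial data $(v_2(L_*),v_2'(L_*))=(\gamma_0,-\delta_0)$ with $\gamma_0>0$, $\delta_0>0$ on this manifold: since $W^s_{loc}$ is a $C^1$ curve through the origin tangent to a vector with positive first and negative second component, points of $W^s_{loc}$ in the quadrant $\{v_2>0,\ w_2<0\}$ exist arbitrarily close to the origin, and choosing one such point with first coordinate $\gamma_0$ small enough (and $\delta_0$ the corresponding slope) finishes the construction. Finally one should note that $g_{super}$ agrees with $g(\cdot,0)$ only on $(L_*,+\infty)$; since the statement only asks about behaviour as $x\to+\infty$, the values of $g_{super}$ on $\mathbb{R}_-\cup(0,L_*)$ are irrelevant here — the ODE is solved forward from $L_*$ and never revisits $x\le L_*$. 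The main (mild) obstacle is purely bookkeeping: confirming $g'(0)<0$ from \eqref{H3S2}–\eqref{H4PS2} so that the equilibrium is genuinely a saddle and the tangent vector has the claimed sign; the rest is a verbatim copy of the proof of Lemma \ref{Lemmasuper2}, which is why the authors omit it.
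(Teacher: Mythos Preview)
Your approach is exactly what the paper intends: it explicitly states that the proof is identical to that of Lemma~\ref{Lemmasuper2}, and you have faithfully reproduced that stable-manifold argument for the autonomous equation $-cv_2'-v_2''=g(v_2,0)$ on $(L_*,+\infty)$. One minor algebraic slip: the discriminant in your eigenvalue formula should be $c^2-4g'(0)=c^2+4|g'(0)|$ rather than $c^2+4g'(0)$ (compare the paper's vector $\bigl(1,\tfrac{|c|-\sqrt{c^2+4|g'(0)|}}{2}\bigr)$ in the proof of Lemma~\ref{Lemmasuper2}); with the correct sign the eigenvalues are always real and of opposite sign, and your saddle conclusion stands.
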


\begin{lemma}\label{lemmasuperrealpb3}
There exists a size $L_*>0$ such that there exists a solution $v_1$ of the following problem
\begin{equation}\label{eqsuper12}
\left\lbrace
\begin{aligned}
    &-c v_1' - v_1'' = - \frac{\mu}{2} v_1 \\
    &v_1(0) = 1, \quad v_1(L_*) = \gamma_0, \\
    &v_1'(0) \leq 0, \quad v_1'(L_*) =0.
\end{aligned}
\right.
\end{equation}
\end{lemma}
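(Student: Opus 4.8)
The plan is to reproduce the proof of Lemma~\ref{lemmasuper1} almost verbatim, the only change being that the death rate $\mu$ there is replaced by $\mu/2$ here. Let $\lambda_{\pm}=\frac{-c\pm\sqrt{c^2+2\mu}}{2}$ be the (positive and negative) roots of $r^2+cr-\frac{\mu}{2}=0$; since $c\le 0$ we have $\lambda_+>0>\lambda_-$ and $\lambda_+-\lambda_-=\sqrt{c^2+2\mu}$. I would look for $v_1$ in the explicit form
\[
v_1(x)=\frac{\gamma_0}{\lambda_+-\lambda_-}\bigl[\lambda_+e^{\lambda_-(x-L_*)}-\lambda_-e^{\lambda_+(x-L_*)}\bigr],
\]
where $\gamma_0>0$ is the constant produced by Lemma~\ref{Lemmasuperrealpb2} (by the stable-manifold argument underlying that lemma it may be taken as small as we wish, in particular $\gamma_0<1$). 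By construction this function solves $-cv_1'-v_1''=-\frac{\mu}{2}v_1$ on all of $\mathbb{R}$, it satisfies $v_1(L_*)=\gamma_0$, and since $v_1'(x)=\frac{\gamma_0\lambda_+\lambda_-}{\lambda_+-\lambda_-}\bigl[e^{\lambda_-(x-L_*)}-e^{\lambda_+(x-L_*)}\bigr]$ it also satisfies $v_1'(L_*)=0$. Finally, $\lambda_+>0>\lambda_-$ makes both summands in the bracket defining $v_1$ positive, so $v_1>0$ on $\mathbb{R}$.

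It then remains only to pick $L_*>0$ so that the two conditions at $0$ hold. The inequality $v_1'(0)\le 0$ is automatic for every $L_*>0$: in $v_1'(0)=\frac{\gamma_0\lambda_+\lambda_-}{\lambda_+-\lambda_-}\bigl[e^{-\lambda_-L_*}-e^{-\lambda_+L_*}\bigr]$ the prefactor is negative (the product $\lambda_+\lambda_-<0$ divided by $\lambda_+-\lambda_->0$) while the bracket is positive since $-\lambda_->0>-\lambda_+$; hence $v_1'(0)<0$. For the condition $v_1(0)=1$, I would write $v_1(0)$ explicitly as a function of $L_*$; exactly as in Lemma~\ref{lemmasuper1} this yields
\[
v_1(0)=\frac{\gamma_0\,e^{cL_*/2}}{\sqrt{c^2+2\mu}}\Bigl[-c\,\sinh\bigl(\tfrac{\sqrt{c^2+2\mu}}{2}L_*\bigr)+\sqrt{c^2+2\mu}\,\cosh\bigl(\tfrac{\sqrt{c^2+2\mu}}{2}L_*\bigr)\Bigr],
\]
a continuous function of $L_*$ equal to $\gamma_0$ at $L_*=0$ and tending to $+\infty$ as $L_*\to+\infty$ (it grows like $e^{(c+\sqrt{c^2+2\mu})L_*/2}$, and $c+\sqrt{c^2+2\mu}>0$ because $\mu>0$). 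Since $\gamma_0<1$, the intermediate value theorem provides an $L_*>0$ with $v_1(0)=1$, which completes the construction. If one moreover wants $L_*\ge L$, as required by the three-interval decomposition used to build the super-solution, it suffices to first shrink $\gamma_0$: the threshold $L_*$ solving $v_1(0)=1$ is then pushed to $+\infty$.

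There is no substantive obstacle: the statement is a routine consequence of integrating a constant-coefficient linear ODE and applying the intermediate value theorem, and it is structurally identical to Lemma~\ref{lemmasuper1}. The only points needing a line of justification are the sign of $v_1'(0)$ (which follows at once from the signs of $\lambda_{\pm}$) and the fact that the target value $1$ exceeds the value $\gamma_0$ reached at $L_*=0$ — which is why we record $\gamma_0<1$ and, if the extra constraint $L_*\ge L$ is wanted, take $\gamma_0$ sufficiently small.
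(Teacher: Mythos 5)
Your proof is correct and is essentially the paper's own argument: the authors explicitly state that the proof of Lemma~\ref{lemmasuperrealpb3} is identical to that of Lemma~\ref{lemmasuper1} with $\mu$ replaced by $\mu/2$, which is exactly what you carry out. Your added remarks (that one needs $\gamma_0<1$ for the intermediate value theorem to apply, and that shrinking $\gamma_0$ pushes $L_*$ above any prescribed $L$) are small but worthwhile clarifications of points the paper leaves implicit.
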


We conclude with the construction of the super-solution:
\begin{lemma}\label{SuperSolutionPS2}
For any speed $c \leq 0$, size $L>0$, there exists $\mathrm{M}>0$ such that \eqref{PS2} with parameters $(c, L, \mathrm{M})$ admits a super-solution $\psi_+$ which satisfies 
\[ \psi_+(-\infty) = 1 \quad \text{ and } \quad \psi_+(+\infty) = 0.\]
\end{lemma}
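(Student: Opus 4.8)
The plan is to glue together three pieces exactly as in the proof of Proposition~\ref{proposuperPS1}: the constant $1$ on $(-\infty,0)$, the function $v_1$ from Lemma~\ref{lemmasuperrealpb3} on $(0,L_*)$, and the function $v_2$ from Lemma~\ref{Lemmasuperrealpb2} on $(L_*,+\infty)$, and then to invoke Lemma~\ref{Lemmasuperrealpb1} to guarantee that this piecewise function — which by construction is an exact solution of the \emph{auxiliary} equation $-c\psi'-\psi''=g_{super}(\psi,x)$ in each subinterval — is a genuine super-solution of the equation for $f$ in \eqref{PS2}, because $g(u,x)\le g_{super}(u,x)$ pointwise.

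The only point needing care is the order in which the parameters are chosen, so as to avoid a circular dependence ($g_{super}$ itself depends on $L_*$, and $L_*$ will in turn be produced by Lemma~\ref{lemmasuperrealpb3} using $\gamma_0$). First I would fix $c\le 0$ and $L>0$ and observe that on $(L_*,+\infty)$ one has $g_{super}(\cdot,x)=g(\cdot,0)$, so the ODE appearing in Lemma~\ref{Lemmasuperrealpb2} is the \emph{autonomous} equation $-cv_2'-v_2''=g(v_2,0)$; by translation invariance the constants $\gamma_0,\delta_0$ it provides (and the decay $v_2(+\infty)=v_2'(+\infty)=0$) do not depend on the eventual value of $L_*$. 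Hence I would fix $\gamma_0,\delta_0$ first, then apply Lemma~\ref{lemmasuperrealpb3} to obtain a size $L_*\ge L$ together with $v_1$ satisfying $v_1(0)=1$, $v_1'(0)\le 0$, $v_1(L_*)=\gamma_0$, $v_1'(L_*)=0$, and only then, with $L_*$ now fixed, apply Lemma~\ref{Lemmasuperrealpb1} to get a threshold $\mathrm{M}_0$ and pick any $\mathrm{M}>\mathrm{M}_0$.

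Finally I would set
\[
\psi_+(x)=\begin{cases} 1, & x\le 0,\\[2pt] v_1(x), & 0<x<L_*,\\[2pt] v_2(x), & x\ge L_*,\end{cases}
\]
and check the three requirements. (i) \emph{Super-solution inside each interval}: on $(-\infty,0)$, $-c\psi_+'-\psi_+''=0=g(1,0)=g_{super}(1,x)\ge g(\psi_+,m_S(x))$ by Lemma~\ref{Lemmasuperrealpb1}; on $(0,L_*)$, $-c\psi_+'-\psi_+''=-\frac{\mu}{2}v_1=g_{super}(v_1,x)\ge g(\psi_+,m_S(x))$, using that $v_1>0$ there (immediate from its explicit exponential form) so that $g_{super}$ is evaluated where it is defined; on $(L_*,+\infty)$, $-c\psi_+'-\psi_+''=g(v_2,0)=g_{super}(v_2,x)\ge g(\psi_+,m_S(x))$. (ii) \emph{Matching and jump conditions}: continuity at $0$ and $L_*$ is built into the boundary conditions ($v_1(0)=1$, $v_1(L_*)=\gamma_0=v_2(L_*)$), while the super-solution jump conditions hold since $\psi_+'(0^-)=0\ge v_1'(0)=\psi_+'(0^+)$ and $\psi_+'(L_*^-)=v_1'(L_*)=0\ge -\delta_0=v_2'(L_*)=\psi_+'(L_*^+)$. (iii) \emph{Limits}: $\psi_+(-\infty)=1$ trivially and $\psi_+(+\infty)=v_2(+\infty)=0$ by Lemma~\ref{Lemmasuperrealpb2}. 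I do not expect a genuinely hard step here once Lemmas~\ref{Lemmasuperrealpb1}--\ref{lemmasuperrealpb3} are in hand; the subtle part is precisely the bookkeeping just described — fixing $\gamma_0,\delta_0$ before $L_*$, which is legitimate only because $g_{super}$ agrees with the autonomous $g(\cdot,0)$ beyond $L_*$.
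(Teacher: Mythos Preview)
Your proposal is correct and follows essentially the same route as the paper: fix $\gamma_0,\delta_0$ from Lemma~\ref{Lemmasuperrealpb2}, then $L_*$ from Lemma~\ref{lemmasuperrealpb3}, then $\mathrm{M}$ from Lemma~\ref{Lemmasuperrealpb1}, and glue $1$, $v_1$, $v_2$ together. Your explicit justification of why $\gamma_0,\delta_0$ may be fixed before $L_*$ (autonomy of $g_{super}(\cdot,x)=g(\cdot,0)$ on $(L_*,+\infty)$) and your verification of the jump conditions at $0$ and $L_*$ are in fact more detailed than the paper's own presentation.
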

\begin{proof}
Let $c \leq 0$ be a fixed parameter. Let $\gamma_0$ and $\delta_0$ be given by Lemma \ref{Lemmasuperrealpb2}. Next, thanks to Lemma \ref{lemmasuperrealpb3}, we deduce the existence of $L_*$ such that the conclusion of Lemma \ref{lemmasuperrealpb3} holds true. Then, according to Lemma \ref{Lemmasuperrealpb1}, there exists a value of $\mathrm{M}$ large enough such that it is sufficient to construct a super-solution of the equation
\begin{equation}\label{eqsuperrealpb}
-c\psi_+' - \psi_+'' = g_{super}(\psi_+,x)
\end{equation}
to obtain a super-solution of $-cu' - u'' = g(u, m_S(x))$. The function
\[ \psi_+(x)= \left\lbrace\begin{aligned}
& 1&& \text{ for } x < 0, \\
& v_1(x) && \text{ for } 0 \leq x \leq L_*, \\
& v_2(x) && \text{ for } x >L_*, 
\end{aligned}\right. \]
(where the function $v_1$ and $v_2$ are provided by Lemmas \ref{Lemmasuperrealpb2} and \ref{lemmasuperrealpb3}) is a super-solution of \eqref{eqsuperrealpb}. Since $\underset{x \to -\infty }{\lim} \psi_+(x) = 1$ and $\underset{ x \to +\infty}{\lim} \psi_+(x) = \underset{ x \to +\infty}{\lim} v_2(x) = 0$, the conclusion follows. 
\end{proof}

\subsubsection{Construction of a sub-solution}

Since \eqref{PS2} is fully non-autonomous, the construction of the sub-solution is more tricky than the one for \eqref{PS1}. First, we introduce the following result:

\begin{lemma}\label{lemma:g(f,m)}
There exists $m_0>0$ such that for all $m\in [0, m_0]$, the application $(f \mapsto g(f,m))$ is bistable in the following sense:
\[ \begin{aligned} 
\exists \alpha_m, \varepsilon_m>0 \text{ such that }& \alpha_m < 1-\varepsilon_m,\\
&g(0, m) = g(\alpha_m, m) = g(1-\varepsilon_m, m) = 0,\\ 
&g(f, m) < 0 \quad \text{ for } 0 < f < \alpha_m,\\
\text{ and }& g(f,m)>0 \quad  \text{ for } \alpha_m < f < 1-\varepsilon_m.
\end{aligned}\]
Moreover, $m_0$ can be taken small enough such that $\alpha_m$ is an unstable equilibrium and $1-\varepsilon_m$ is a stable equilibrium with 
\[ \int_{0}^{1-\varepsilon_m} g(f, m) df > 0.\]
\end{lemma}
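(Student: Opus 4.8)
The plan is to take the bistable profile of $g(\cdot,0)$ furnished by \eqref{H3S2} and to deform it for small $m$ using the strict monotonicity \eqref{H4PS2} together with the continuity in $m$ of \eqref{H5PS2} (and the fact that, as \eqref{H3S2}--\eqref{H4PS2} presuppose, $g$ is $C^1$ in $(f,m)$). Write $0<\alpha<1$ for the zeros of $g(\cdot,0)$ coming from \eqref{H1S1}, so that $g(\cdot,0)<0$ on $(0,\alpha)$, $g(\cdot,0)>0$ on $(\alpha,1)$, and $\partial_f g(0,0)<0$, $\partial_f g(1,0)<0$, $\partial_f g(\alpha,0)>0$. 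I will produce $\alpha_m$ by continuing the zero $\alpha$ and $1-\varepsilon_m$ by continuing the zero $1$, check the sign pattern on all of $(0,1-\varepsilon_m)$, and finally verify the integral condition; the threshold $m_0$ will be the smallest of the thresholds appearing along the way.

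The interval $(0,\alpha)$ requires no perturbative argument. By \eqref{H5PS2}, $g(0,m)=0$ for every $m\geq0$; and by \eqref{H4PS2}, for $f\in(0,\alpha)$ and $m>0$,
\[ g(f,m)-g(f,0)=\int_0^m\partial_m g(f,s)\,ds<0, \]
so $g(f,m)<g(f,0)<0$. Hence $g(\cdot,m)<0$ on $(0,\alpha)$ for all $m\geq0$, and in particular $g(\alpha,m)<0$ and $g(1,m)<0$ for all $m>0$.

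For the two relevant equilibria, fix $\eta>0$ small enough that $\alpha+\eta<1-\eta$, that $\partial_f g(\cdot,0)\geq c_0>0$ on $[\alpha-\eta,\alpha+\eta]$ and $\partial_f g(\cdot,0)\leq-c_0<0$ on $[1-\eta,1]$ (possible since $\partial_f g(\alpha,0)>0$, $\partial_f g(1,0)<0$ and $g$ is $C^1$), and that $g(\cdot,0)\geq c_1>0$ on the compact set $[\alpha+\eta,1-\eta]$. By continuity of $g$ and $\partial_f g$ in $m$, there is $m_0>0$ such that for every $m\in[0,m_0]$ one still has $\partial_f g(\cdot,m)>0$ on $[\alpha-\eta,\alpha+\eta]$, $\partial_f g(\cdot,m)<0$ on $[1-\eta,1]$ and $g(\cdot,m)>0$ on $[\alpha+\eta,1-\eta]$. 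Then for $m\in(0,m_0]$: $g(\cdot,m)$ is strictly increasing on $[\alpha,\alpha+\eta]$ with $g(\alpha,m)<0<g(\alpha+\eta,m)$, so it has a unique zero $\alpha_m\in(\alpha,\alpha+\eta)$, with $\partial_f g(\alpha_m,m)>0$, $g(\cdot,m)<0$ on $[\alpha,\alpha_m)$ and $g(\cdot,m)>0$ on $(\alpha_m,\alpha+\eta]$; similarly $g(\cdot,m)$ is strictly decreasing on $[1-\eta,1]$ with $g(1-\eta,m)>0>g(1,m)$, so it has a unique zero $b_m\in(1-\eta,1)$, with $\partial_f g(b_m,m)<0$ and $g(\cdot,m)>0$ on $[1-\eta,b_m)$. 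Set $\varepsilon_m:=1-b_m\in(0,\eta)$ (and $\alpha_0:=\alpha$, $\varepsilon_0:=0$). Since $(0,b_m)=(0,\alpha]\cup[\alpha,\alpha+\eta]\cup[\alpha+\eta,1-\eta]\cup[1-\eta,b_m)$, piecing together the four sign statements above gives $g(0,m)=g(\alpha_m,m)=g(1-\varepsilon_m,m)=0$, $g(\cdot,m)<0$ on $(0,\alpha_m)$, $g(\cdot,m)>0$ on $(\alpha_m,1-\varepsilon_m)$, and $\alpha_m<1-\eta<1-\varepsilon_m$; the signs of $\partial_f g$ at $\alpha_m$ and $b_m$ say that $\alpha_m$ is an unstable equilibrium and $1-\varepsilon_m$ a stable one.

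For the integral, \eqref{H5PS2} gives $\sup_{f\in[0,1]}|g(f,m)-g(f,0)|\to0$ as $m\to0$, hence $g$ is bounded on $[0,1]\times[0,m_0]$ and, together with $b_m\to1$,
\[ \int_0^{1-\varepsilon_m} g(f,m)\,df = \int_0^1 g(f,m)\,df - \int_{b_m}^1 g(f,m)\,df \ \underset{m\to0}{\longrightarrow}\ \int_0^1 g(f,0)\,df>0, \]
so, shrinking $m_0$ once more, $\int_0^{1-\varepsilon_m}g(f,m)\,df>0$ for all $m\in[0,m_0]$; taking $m_0$ to be the minimum of the finitely many thresholds used above completes the argument. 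The main obstacle is the \emph{global} bistable structure on $(0,1-\varepsilon_m)$ — ruling out spurious zeros and showing $1-\varepsilon_m<1$ strictly — which forces one to combine the monotonicity in $m$ on $(0,\alpha)$, the strict sign $\partial_m g<0$ that pushes the carrying capacity below $1$, the persistence of the sign of $\partial_f g$ near the nondegenerate equilibria $\alpha$ and $1$, and the uniform positivity of $g(\cdot,0)$ on the compact middle interval $[\alpha+\eta,1-\eta]$; each of these ingredients controls one region, and only together do they yield the full statement.
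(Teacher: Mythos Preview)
Your argument is correct and follows the same continuity/perturbation idea as the paper's proof, which is a two-line sketch invoking only that $g(\cdot,0)$ is bistable and that $g$ is uniformly continuous in $m$ with $g(0,m)=0$. You supply the details the paper omits --- in particular the use of $\partial_m g<0$ from \eqref{H4PS2} to force the upper equilibrium strictly below $1$, and the (explicitly flagged) $C^1$-in-$(f,m)$ regularity needed to propagate the sign of $\partial_f g$ near $\alpha$ and $1$.
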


\begin{proof}Remarking that $(f \mapsto g(f, 0))$ is a bistable reaction term and noticing that $(m \mapsto g(F, m))$ is uniformly continuous with $g(0, m) = 0$ for any $m>0$ (assumption \eqref{H4PS2}), we deduce that there exists $m_0>0$, such that the conclusion is true.
\end{proof}

Remark that $\alpha_m \to \alpha$ and $1-\varepsilon_m \to 1$ as $m \to 0$. Therefore, without loss of generality we may assume $m_0$ small enough such that
\begin{equation}\label{gdecreasing}
\dfrac{\partial g   }{\partial f}(f, 0)<0 \quad \text{ for any } f \in ]1-\varepsilon_m, 1[.
\end{equation}
Let $m \in ]0, m_0[$ be fixed and $\psi_-$ be the solution of 
\begin{equation} \label{phimoins}
\left\lbrace
\begin{aligned}
&- \psi_-'' =g(\psi_-, m) && \text{ in } \mathbb{R}^-,\\
&\psi_-(0) = 0, \ \underset{ x < 0}{\underset{ x \to 0}{\lim}}\psi_-'(0) = -2\sqrt{\int_{0}^{1-\varepsilon_m} g(u, m)du},  \\
&\psi_- = 0 && \text{ in } \mathbb{R}^+.
\end{aligned}
\right.
\end{equation}
A classical portrait phase analysis tells us that $\psi_-(-\infty)  = 1-\varepsilon_m$ and $\psi_-' \leq  0$. We claim that a translation of $\psi_-$ is an admissible sub-solution. 

\begin{lemma}\label{subsolutionPS2}
There exists $x_0<0$ such that $ \phi_-(x) = \psi_-(x - x_0)$ is a sub-solution of \eqref{PS2}.
\end{lemma}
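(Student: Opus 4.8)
The plan is to verify directly that the piecewise function $\phi_-(x)=\psi_-(x-x_0)$, paired with the exact sterile-male profile $m_S$ from Proposition~\ref{Propo_Ms}, satisfies the sub-solution requirement for \eqref{PS2} once $x_0<0$ is taken negative enough. Since $m_S$ solves its own (decoupled) equation exactly, it is in particular a super-solution of that equation, so in the sense of Proposition~\ref{comparisonPS2} only the scalar inequality $-c\phi_-'-\phi_-''\leq g(\phi_-,m_S(x))$ for the $f$-component has to be checked, together with the derivative-matching condition at the gluing point $x_0$, which is the only point where $\phi_-$ fails to be $C^2$ (note that $g(\cdot,m_S(\cdot))$ itself is continuous since $m_S\in C^1$ by Proposition~\ref{Propo_Ms}).

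First I would split the line at $x_0$. On $\{x>x_0\}$ we have $\phi_-\equiv 0$, so $-c\phi_-'-\phi_-''=0=g(0,m_S(x))$ by \eqref{H5PS2}, and the inequality holds with equality. On $\{x<x_0\}$ we use that $\psi_-$ solves $-\psi_-''=g(\psi_-,m)$, hence $-\phi_-''=g(\phi_-,m)$, and the desired inequality becomes
\[ -c\,\phi_-'(x)+g(\phi_-(x),m)\leq g(\phi_-(x),m_S(x)). \]
Because $c\leq0$ and $\phi_-'=\psi_-'(\cdot-x_0)\leq0$ (from the phase-plane description of $\psi_-$), the term $-c\phi_-'$ is nonpositive, so it suffices to establish $g(\phi_-(x),m)\leq g(\phi_-(x),m_S(x))$. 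By the strict monotonicity of $m\mapsto g(f,m)$ in \eqref{H4PS2}, this is implied by the pointwise bound $m_S(x)\leq m$ for all $x<x_0$; here it matters that $\psi_-$ (hence $\phi_-$) takes values in $[0,1-\varepsilon_m]$, so every evaluation of $g$ stays inside the bistability window of Lemma~\ref{lemma:g(f,m)}.

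The choice of $x_0$ is then dictated by Proposition~\ref{Propo_Ms}: $m_S$ is increasing on $(-\infty,x_\ast)$ for some $x_\ast\in(0,L)$ and $m_S(x)\to0$ as $x\to-\infty$, so I would pick $x_0<0$ (hence $x_0<x_\ast$) with $m_S(x_0)\leq m$; monotonicity then gives $m_S(x)\leq m_S(x_0)\leq m$ on all of $(-\infty,x_0)$. Finally the gluing condition at $x_0$ holds because the left derivative is $\psi_-'(0^-)=-2\sqrt{\int_0^{1-\varepsilon_m}g(u,m)\,du}<0$ while the right derivative is $0$, so $\lim_{x\to x_0^-}\phi_-'(x)\leq\lim_{x\to x_0^+}\phi_-'(x)$, which is exactly the one-sided inequality demanded of a sub-solution.

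The one delicate point — and the reason the lemma is not completely immediate — is the interplay between the admissible level $m$ (which, through Lemma~\ref{lemma:g(f,m)}, fixes the whole profile $\psi_-$) and the requirement that $m_S$ remain below $m$ on the entire half-line left of $x_0$: it is precisely the qualitative description of $m_S$ in Proposition~\ref{Propo_Ms} (monotone up to an interior maximum, decaying at $-\infty$) that makes this possible, and it is what forces $x_0$ to be chosen far enough to the left rather than at an arbitrary negative value.
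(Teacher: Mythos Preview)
Your argument is correct and follows essentially the same route as the paper: split at $x_0$, use $\psi_-''=-g(\psi_-,m)$ together with $-c\psi_-'\le 0$ and the monotonicity \eqref{H4PS2} to reduce the sub-solution inequality to $m_S(x)\le m$ on $(-\infty,x_0)$, and then check the derivative jump at $x_0$. The only cosmetic difference is that the paper sets $x_0=\sup\{x<0:\ m_S(x)<m\}$, whereas you pick any $x_0<0$ with $m_S(x_0)\le m$ and invoke the monotonicity of $m_S$ on $(-\infty,x_\ast)$ from Proposition~\ref{Propo_Ms}; both choices yield $m_S\le m$ on $(-\infty,x_0)$.
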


\begin{proof}
We define $x_0 =  \sup \left\lbrace x < 0,  \quad m_S(x)< m \right\rbrace$ (such a $x_0$ exists thanks to Proposition \ref{Propo_Ms}). Since $\phi_{-| \left\lbrace x > x_0 \right\rbrace}=0$, we focus on $x < x_0$:
\[ - c \psi_-' - \psi_-'' - g(\psi_-, m_S(x) ) = -c \psi_-' +g(\psi_-, m) - g(\psi_-, m_S(x)).\]
First we remark that $m_S(x) < m$ implies $g(\psi_-, m) - g(\psi_-, m_S(x)) < 0$ (thanks to \eqref{H4PS2}). Second, it is clear that $-c \psi_-'<0$. We conclude that
\[ - c \psi_-'-\psi_-'' - g(\psi_-, m_S(x) ) \leq 0 ,\]
Since the $\underset{ x < x_0}{\underset{ x \to x_0}{\lim }}\psi_-(x)' < 0  = \underset{ x > x_0}{\underset{ x \to x_0}{\lim }}\psi_-'(x) $, $\psi_-$ is a subsolution of \eqref{PS2}. 
\end{proof}

\subsubsection{Conclusion: Construction of a solution}

\begin{proof}[Proof of Proposition \ref{f}]

From the sub and the super-solutions established in Lemmas \ref{SuperSolutionPS2} and \ref{subsolutionPS2}, it follows the existence of a solution $f$ of \eqref{PS2} with parameters $(c, L, \mathrm{M})$ which satisfies:
\[ \underset{ x \to +\infty}{\lim} f(x) = 0 \quad \text{ and } \quad  1-\varepsilon_m \leq \underset{ x \to -\infty}{\lim} f(x) \leq  1.\]
We claim that $\underset{ x \to -\infty}{\lim} f(x)  = 1$. We distinguish two cases : 
\[     \underset{ x \to -\infty}{\liminf} f(x) =  \underset{ x \to -\infty}{\limsup} f(x) \quad \text{ or } \quad \underset{ x \to -\infty}{\liminf} f(x)  <   \underset{ x \to -\infty}{\limsup} f(x) .\]

\vspace{0.2cm}

\begin{description}
\item [Case 1.] In this case, if $f$ converges, it is to a zero of $g( \cdot, 0)$. Moreover, this root must belong to $[1-\varepsilon_m, 1]$. It follows that this root is $1$. We conclude that 
\[ \underset{ x \to -\infty}{\lim} f(x) = 1.\]
\item [Case 2.] In this case, we deduce that $f'$ changes its sign in a neighbourhood of $-\infty$. Let $\left( y_n \right)_{n \in \mathbb{N}}$ be a decreasing sequence such that $y_n \to -\infty$, $f(y_n)$ is a local minimum of $f$ and $|f(y_n) - \underset{x \to -\infty}{\limsup}f(x)| >\frac{1}{2} |\underset{ x \to -\infty}{\liminf} f(x) - \underset{x \to -\infty}{\limsup}f(x)| := \delta $. Since $f(y_n)$ is a local minimum, we deduce that 
\[ f'(y_n) = 0 \quad \text{ and } \quad g(f(y_n), m_S(y_n)) = -f''(y_n) \leq 0.\]
Moreover, since $1-\varepsilon_m \leq \liminf f(y_n) \leq 1 - \delta$, we deduce that 
\[\liminf \  g(f(y_n), m_S(y_n)) = \liminf  \ g(f(y_n), 0) > 0 .\]
We have a contradiction, this case is impossible. 
\end{description}

\vspace{0.4cm}

We conclude that $f(x) \underset{ x \to -\infty}{\longrightarrow} 1$ and the solution verifies all the desired properties. 

\end{proof}

\subsection{Study of $\Pi(L, c)$}

\begin{proof}[Proof of Proposition \ref{Pi}]We split the proof into 3 parts: first we prove that $\Pi$ is well defined, next we prove that it is decreasing, finally, we study the limit $L \to +\infty$. 

\vspace{0.4cm}

\textit{Proof that $\Pi$ is well defined. } Since for any size $L>0$ and speed $c \leq 0$, a solution to \eqref{PS2} exists for $\mathrm{M}$ large enough, we deduce that $\Pi(c,L)$ is well defined. 

\vspace{0.4cm}

\textit{Proof that $\Pi$ is decreasing with respect to $L$. } Let two sizes of interval $L_1$ and $L_2$ be such that $L_1 < L_2$. Let $\mathrm{M}>0$ be such that \eqref{PS2} with parameters $(c, L_1, \mathrm{M})$ admits a solution $f^1$. Finally, we introduce $m_S^{1}$ and $m_S^2$ as the two associated (with respect to $L_i$) distributions of sterile males. We claim that $f^1$ is a super-solution to \eqref{PS2} with parameters $(c, L_2, \mathrm{M})$. Indeed, since $L_2> L_1$, we deduce thanks to \eqref{Ms} that $m_S^1 < m_S^2$. By recalling that $m \mapsto g(f, m)$ is decreasing, it follows that 
\[ - c (f^1)' - (f^1)'' - g(f^1, m_{S}^{2})  = g(f^1, m_{S}^{1}) - g(f^1 , m_{S}^{2})>0.\]
By the method of the sub and super-solution presented above, we deduce that \eqref{PS2} with parameters $(c, L_2, \mathrm{M})$ admits a solution. Passing to the infimum, we conclude that 
\[ \Pi (c, L_1) \geq \Pi (c, L_2). \]
 
 \vspace{0.4cm}

\textit{Proof that $\underset{L \to 0}{\lim} \ \Pi (c, L) = +\infty$. } Assume by contradiction that it is not the case. Let $\Pi_0$ be the supremum of $\Pi$ in a neighborhood of $L=0$. Let $m_S^L$ be the distribution of sterile males associated with parameters $(c, L, \Pi_0)$ and $f_L$ the associated solution of \eqref{PS2}. We introduce $m_0$ such that $\left(f \mapsto g(f, m_0)\right)$ is bistable and $\mathrm{sign} \left( \int_0^{1-\varepsilon_{m_0}} g(u, m_0)du \right) = \mathrm{sign} \left( \int_0^{1-\varepsilon_{m_0}} g(u , 0)du \right) > 0$. According to \eqref{Ms}, we have $m_S^L(x) \underset{ L \to 0}{\longrightarrow} 0$ uniformly therefore, we assume that $L$ is small enough such that $m_S^L<m_0$. We claim that $f_L$ is a super-solution of
\begin{equation}\label{gum0}
-cu' -u''   = g(u, m_0).
\end{equation}
Indeed, according to \eqref{H4PS2}, it follows that
\[ -cf_L' - f_L'' - g(f_L, m_0) = g(f_L, m_S ) - g(f_L , m_0) > 0.\]
Hence, thanks to Lemma \ref{subsolutionPS2}, we deduce the existence of a traveling wave solution of \eqref{gum0} which connects the two stable states $0$ and $1-\varepsilon_{m_0}$. Moreover, this traveling wave has a negative speed $c$ which is in contradiction with \cite{aronsonweinber} (since $\int_0^{1-\varepsilon_{m_0}} g(u, m_0)du>0$ by hypothesis). 

\vspace{0.4cm}

\textit{Proof that  $\underset{L \to +\infty}{\lim}\Pi (c, L) = \Pi_\infty(c)>0$. } The proof works the same as the above : By contradiction, if we assume $\underset{L \to +\infty}{\lim}\Pi (c, L) = 0$, one can deduce the existence of a traveling wave of a bistable equation with a negative speed which connects the two stable states. Moreover, this bistable equation can be chosen such that the only traveling wave that connects these two stable states has a positive speed (see \cite{aronsonweinber}): a contradiction. We let the details to the reader. 

\vspace{0.4cm}

\textit{Proof that $\underset{c \to -\infty}{\lim} \Pi(c,L) = +\infty.$ } The proof works the same as the above proof : by contradiction. If the conclusion is false for a size $L>0$, it follows that there exists $\mathrm{M}_0>0$ such that for any negative speed $c$, $\Pi(c,L)< \mathrm{M}_0$. Next, the dominated convergence theorem applied to \eqref{Ms} implies that $\| m_S \|_\infty \to 0$ as $c\to -\infty$. Let $m_0$ be the parameter provided by Lemma \ref{lemma:g(f,m)}, $c<0$ such that $\Pi(c, L) < \mathrm{M}_0$ and $m_S < m_0$. We conclude by remarking that \eqref{PS2} with parameters $(c, L, \mathrm{M}_0)$ admits a solution $u$ and such a solution is a super-solution of
\[ -cf' - f'' = g(f, m_0).\]
This gives the existence of a traveling wave solution with a negative speed to the equation 
\[ \left\lbrace
\begin{aligned}
&\partial_t f - f'' = g(f, m_0), \\
&f(-\infty) = 1-\varepsilon_{m_0}, \quad f(+\infty) = 0.
\end{aligned}
\right.\]
By taking $m_0$ small enough such that $\int_0^{1-\varepsilon_{m_0}} g(u, m_0)du>0$ the contradiction follows. 
\end{proof}

\section{Numerical illustrations}\label{SectionNum}

First, we detail the numerical schemes. We discretize a parabolic version of \eqref{P'} and we let the time increases until the numerical solution reaches a numerical equilibrium. This equilibrium is assumed to be achieved if the error between two large times is small enough (i.e. a Cauchy criteria). We use an interval large enough to consider that the derivatives of the stable states at the boundaries are $0$. Therefore, we implement \eqref{P'} with Neumann boundary conditions:
\begin{equation}\label{eq:time:dpt}
\left\lbrace 
\begin{aligned}
    &\partial_t u - c \partial_x u - \partial_{xx} u = g(u) 1_{(0, L)} + Act(x, u)1_{(0, L)^c},\\
    &\partial_{\nu_x} u = 0.
\end{aligned}
\right.
\end{equation}

In a first subsection, we present a numerical illustration of Theorem \ref{mainPS1} for the function 
\[ g(u) = u(1-u)(u-\alpha).\]
This classical bistable reaction term verifies \eqref{H1S1}, \eqref{H2S1} and \textit{$g$ is convex in $(0, \alpha)$}. For the computation of $\Lambda(c)$ for \eqref{PS1}, we use a dichotomy method : We fix a speed $c$ and let two sizes of intervals $L_-<L_+$ be such that \eqref{PS1} admits a solution for parameters $(c, L_+)$ and does not admit a solution for parameter $(c, L_-)$. Next, we solve numerically \eqref{PS1} with parameters $(c, \frac{L_- + L_+}{2})$. If the solution invades the territory, we replace $L_+$ by $\frac{L_- + L_+}{2}$ and if the solution does not invade the territory, we replace $L_-$ by $\frac{L_- + L_+}{2}$. We underline that the sign of the derivative at $L$ provides a suitable invasion criterion.

For the second strategy, we provide a simulation of \eqref{eq:time:dpt} with $g$ which satisfies \eqref{gms} with different sets of parameters $(c,L, \mathrm{M})$. Since we do not have a precise invasion criterion as above, we only provide the critical size that ensures invasion for a fixed speed $c$ and a constant number of released sterile mosquitoes $\mathrm{M}$. 

\subsection{The killing strategy for the "classical" bistable reaction term}

We perform the numerical investigation of $\Lambda(c)$ for different values of $c$ between $c_\mathrm{min}$ and $c_\mathrm{max}$. The numerical parameters are fixed as follow : 
\begin{center}
\begin{tabular}{|c|c|c|c|c|c|c|c|c|c|}
\hline
   $x_{\min}$ & $x_{\max}$ & $L_{\min}$ & $L_{\max}$ & $\alpha$  & $dx$ & $dt$ & $c_{\min}$ & $c_{\max}$ \\
   \hline
   -75 & 75 & 2 & 39 & $\frac{1}{4}$  & 0.03 & 0.5 & 0 & 2.2 \\
   \hline
\end{tabular}
\end{center}

Figure \ref{Lambda(c)} represents the dependence of $\Lambda(c)$ with respect to |c|. Figure \ref{c1_alpha...} presents the numerical simulations of \eqref{PS1} with parameters $(1, \Lambda(1))$ (for the orange curve) and $(1, \Lambda(1) - 10^{-4})$ (for the blue curve). Notice that the value $\Lambda(c)$ used here is the one presented in Figure \ref{Lambda(c)}. We recover that the \eqref{PS1} with parameters $(c, L)$ admits a solution if and only if $L>\Lambda(c)$. Finally, Figure \ref{g(u(Lambda))} presents the numerical computations of $g(u(\Lambda(c))$ (with $\Lambda(c)$ obtained numerically). We recover that for $L$ close to $\Lambda(c)$, we have $u(\Lambda(c))$ close to $\alpha$ for $|c| \geq  \frac{\sqrt{3}}{2}$.

\begin{figure}[h]
\begin{minipage}{0.3 \linewidth}
    \centering
    \includegraphics[scale=0.32]{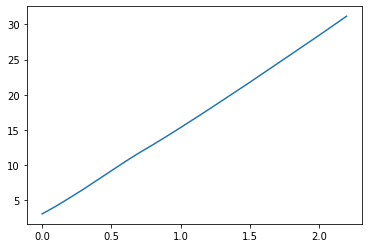}
    \caption{Numerical computation of $\Lambda(c)$ with respect to $|c|$}
    \label{Lambda(c)}
\end{minipage}
\hfill
\begin{minipage}{0.3 \linewidth}
    \centering
    \includegraphics[scale=0.32]{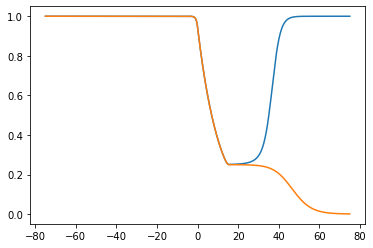}
    \caption{Numerical results of \eqref{PS1} with parameters $(1, \Lambda(1))$ (orange curve) and $(1, \Lambda(1) - 10^{-4})$ (blue curve) }
    \label{c1_alpha...}
\end{minipage}
\hfill
\begin{minipage}{0.27 \linewidth}
    \centering
    \includegraphics[scale=0.32]{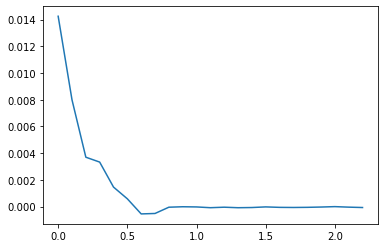}
    \caption{Numerical computation of $g(u(\Lambda(c))$ with respect to $|c|$ }
    \label{g(u(Lambda))}
\end{minipage}
\end{figure}

\subsection{A biological investigation : \textit{the Aedes albopictus} }

For the numerical illustration of this strategy, we present a numerical simulation of the parabolic system modeling specifically the mosquitoes case introduced in \cite{strugarek}. First, we recall the system introduced in Chapter 9 of \cite{strugarek} and the spatially version developed in \cite{almeida2020sterile}. The considered system reads 
\begin{equation}\label{numeqmos}
    \left\lbrace 
    \begin{aligned}
    &\partial_t f - D \partial_{xx} f = \frac{r \nu_E K b \tau f^2(1-e^{\tau f  + \gamma_s m_S})}{b \tau f^2(1-e^{\tau f  + \gamma_s m_S}) + K (\nu_E + \mu_E)(\tau f + \gamma_s m_S)} - \mu_F f\\
    &\partial_t m_S - D  \partial_{xx} m_S = \mathrm{M}1_{(-ct, L-ct)} - \mu_S m_S,  \\
    &f(t=0) = F 1_{\left\lbrace x < a\right\rbrace}, \quad m_S = \mathrm{M}1_{(0,L)},
    \end{aligned}
    \right.
\end{equation}
where $a$ is a positive constant and $F$ is the non-trivial stable equilibrium of $g(\cdot, 0)$. We refer to \cite{strugarek, almeida2020sterile} for a derivation of the model and the explanation of the meaning of the constants. But we provide typical values of the constants:
\begin{center}
\begin{tabular}{|c|c|c|c|c|c|c|c|c|}
\hline
   $r$ & $\nu_E$ & $K$ & $b$ & $\tau$  & $\gamma_s$ & $\mu_F$ & $\mu_S$  \\
   \hline
   0.49 & 0.7 & 1440 & 10 & 0.41  & 1 & 0.04 & 0.1 \\
   \hline
\end{tabular}
\end{center}
For such a choice of parameters, the reaction term $g(\cdot, 0)$ is bistable and one can apply Theorem \ref{mainPS2}. Moreover, we notice that the equilibria $F$ (the non-trivial stable one) and $F'$ (the unstable one) of $g(\cdot, 0)$ are such that 
\[ 0 < F' << F. \]
The aim of the numerical investigation is to illustrate the fact that the generated traveling wave eradicates the \textit{natural} invasive mosquitoes wave. As above, we solve numerically \eqref{numeqmos} by a semi-implicit scheme in a large space interval (large enough to consider that the boundary conditions are of Neumann types). For a fixed number of released mosquitoes ($\mathrm{M} = 20000$) and a fixed speed $c$ ($c=-0.05$), we manage to compute numerically the optimal size of released by a dichotomy method. We present this result in Figure \ref{FigTWSteril}. Indeed, Figure \ref{FigTWSteril} represents the number of females depending on position $x$ and time $t$. The red lines represent the moving interval where we release the mosquitoes : $(-ct, L-ct)$. We observe that in the first case ($L=17.4$), the interval is too small and the population does not go to extinction whereas in the second case ($L=17.49$), the population of females goes to extinction. In this second case, we succeed in generating a traveling wave solution of \eqref{PS2}.
\begin{figure}
\begin{minipage}{0.45 \linewidth}
    \centering
    \includegraphics[scale=0.45]{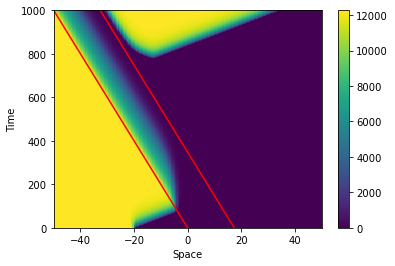}
\end{minipage}
\hfill
\begin{minipage}{0.45 \linewidth}
\centering
    \includegraphics[scale=0.45]{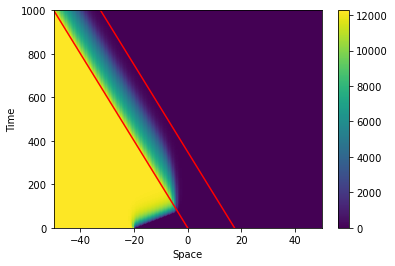}
\end{minipage}
\caption{Numerical simulations of the proportion of females subject to the dynamics of \eqref{numeqmos} with parameters $\mathrm{M} = 20000$, $c=-0.05$ varying the size $L$ of the released interval : Left, $L=17.4$ ; Right, $L=17.49$. Using the dichotomy method, we have obtained the minimal size of interval that ensures invasion for a such set of parameters $(c, \mathrm{M})$. }
\label{FigTWSteril}
\end{figure}

\section{Conclusions and perspectives}\label{SectionConclusion}
 
In conclusion, we study the existence of a traveling wave with negative speed to eradicate a population for both strategies. For the first strategy, this traveling wave exists if and only if the size of the interval where we act is large enough. We also manage to have a full description of the critical case. From a practical point of view, this is an interesting result. Indeed, since killing individuals has a cost, by maximizing the speed, we minimize the duration of the treatment and therefore we minimize the cost of the strategy. \\
For the sterile male strategy, the picture is different. For any size of interval, if we release enough sterile males $\mathrm{M}$, there exists a traveling wave solution. For this strategy, its cost may be defined by the quantity $\mathrm{N}$ of sterile males needed to be produced/released at each time:
\[ \mathrm{N}  = L \times \mathrm{M}.\]
Therefore, a perspective of this work is to optimize this number: giving a speed $c \leq 0$, how to minimize $N$. The aim is to find $\Lambda(c)$ that satisfies
\[ \underset{L>0}{\min } \ L \times \Pi(c, L) = \Lambda(c) \times \Pi(c, \Lambda(c)).\]

\vspace{0.4cm}

For the same purpose (i.e. minimize the cost of production), an other idea is to consider heterogeneous distributions of released of sterile males. For instance, we can model the release of sterile males by a piecewise constant function. The question is \textit{can we generate an eradication traveling wave as above but releasing less mosquitoes ?}\\
A first numerical investigation provides a positive answer. Indeed, we can take advantage that we expect $f(x=L) < f(x=0)$. Therefore, it is not necessary to release as many individuals in the area $x$ close to $L$ than in the area $x$ close to $0$. This is why, we compare the numerical simulations provided after the two following distributions of release:
\[ \mathrm{M} 1_{(0, L_*)}(x) \qquad \text{ and } \qquad   \mathrm{M} 1_{(0, \frac{2L_*}{3})}(x) + \frac{\mathrm{M}}{2} 1_{(\frac{2L_*}{3}, \frac{7L_*}{6})}(x)\]
(where $L_*$ is the minimal numerical size that ensures the extinction of the population for $\mathrm{M}$ and a speed $c$ introduced in section \ref{SectionNum}). The two numerical results are presented in Figure \ref{FigcomparisonStrat}. In both cases, we succeed in eradicating the invasive species. 
\begin{figure}
\begin{minipage}{0.45 \linewidth}
    \centering
    \includegraphics[scale=0.45]{a1749.png}
    Homogeneous released
\end{minipage}
\hfill
\begin{minipage}{0.45 \linewidth}
\centering
    \includegraphics[scale=0.45]{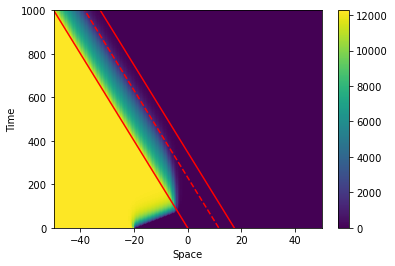}
    Heterogeneous released
\end{minipage}
\caption{Comparison of homogeneous released with a heterogeneous released that need less sterile males. In the heterogeneous case, the first strip represents the moving interval $(-ct, \frac{2L_*}{3}-ct)$ and the second strip stands for $(\frac{2L_*}{3}-ct, \frac{7L_*}{6} - ct)$.}
\label{FigcomparisonStrat}
\label{FigTWSteril2}
\end{figure}

If we compute the number of released sterile males at each step for each strategy, it follows
\[ \mathrm{N}_{homogeneous} = \mathrm{M} L_* \quad \text{ and } \quad \mathrm{N}_{heterogeneous} = \frac{11\mathrm{M} L_*}{12}.\]  
We conclude that less sterile males are needed for the heterogeneous releases. A deeper study of this strategy can be a good approach in order to minimize the total cost of production of sterile males. 

\vspace{0.4cm}

Another perspective is to focus on a problem which is closer to the reality: the same problem in a domain $\Omega \subset \mathbb{R}^2$. Without specifying it, we have used plenty of times that localized area where we \textit{act} separates $\mathbb{R}$ into two disconnected areas. It is obvious that it can be false for higher dimension. Therefore, we expect new difficulties coming from this fact.

\vspace{1cm}

\noindent \textbf{Acknowledgements. } The authors thank B. Perthame for all the fruitful discussions and his precious advices. \\
AL has received funding from the European Research Council (ERC) under the European Union's Horizon 2020 research and innovation program (grant agreement No 740623).

\bibliographystyle{plain} 
{\footnotesize
\bibliography{Biblio}}
\end{document}